\documentclass[a4paper,11pt]{amsart}
\usepackage{latexsym}
\usepackage{amssymb}
\usepackage{amsmath}

\setlength{\textwidth}{6.5in}
\setlength{\topmargin}{0in}
\setlength{\oddsidemargin}{0in}
\setlength{\evensidemargin}{0in}
\pagestyle{empty}

\newtheorem{theorem}{Theorem}
\newtheorem{proposition}[theorem]{Proposition}
\newtheorem{corollary}[theorem]{Corollary}
\newtheorem{lemma}[theorem]{Lemma}
\newtheorem{example}[theorem]{Example}

\newtheorem{definition}[theorem]{Definition}


\newcommand{\RMod}{R{\rm \mbox{-Mod}}}
\newcommand{\RPinj}{R{\rm \mbox{-Pinj}}}
\newcommand{\RInj}{R{\rm \mbox{-Inj}}}
\newcommand{\RFlat}{R{\rm \mbox{-Flat}}}
\newcommand{\RProj}{R{\rm \mbox{-Proj}}}

\newcommand{\RCotor}{R{\rm \mbox{-Cotor}}}

\newcommand{\mcEInj}{\mcE{\rm \mbox{-Inj}}}
\newcommand{\mcEinj}{\mcE{\rm \mbox{-inj}}}

\newcommand{\uMod}{\rm \underline{\mbox{Mod}}}

\newcommand{\Ab}{{\rm {\bf \mbox{Ab}}}}


\newcommand{\add}{{\rm \mbox{add}}}

\newcommand{\Arr}{{\rm \mbox{Arr}}}

\newcommand{\Ext}{{\rm \mbox{Ext}}}

\newcommand{\Hom}{{\rm \mbox{Hom}}}
\newcommand{\uHom}{{\underline {\rm \mbox{Hom}}}}
\newcommand{\im}{{\rm {\mbox{Im }}}}

\newcommand{\Ker}{{\rm \mbox{Ker }}}

\newcommand{\ME}{{\rm \mbox{ME}}}
\newcommand{\Mod}{{\rm \mbox{Mod}}}

\newcommand{\Ob}{{\rm \mbox{Ob}}}

\newcommand{\op}{{\rm \mbox{{\footnotesize op}}}}

\newcommand{\PO}{{\rm \mbox{PO}}}
\newcommand{\PE}{{\rm \mbox{PE}}}
\newcommand{\Pinj}{{\rm \mbox{Pinj}}}

\newcommand{\Tor}{{\rm \mbox{Tor}}}

\newcommand{\gra}{\alpha}

\newcommand{\grO}{\Omega}

\newcommand{\gro}{\omega}

\newcommand{\grf}{\varphi}
\newcommand{\grF}{\Phi}


\newcommand{\mcA}{\mathcal{A}}

\newcommand{\mcE}{\mathcal{E}}
\newcommand{\umcE}{\underline{\mathcal{E}}}
\newcommand{\mcF}{\mathcal{F}}
\newcommand{\mcC}{\mathcal{C}}
\newcommand{\mcI}{\mathcal{I}}
\newcommand{\mcJ}{\mathcal{J}}
\newcommand{\mcK}{\mathcal{K}}

\newcommand{\mcM}{\mathcal{M}}

\newcommand{\mcN}{\mathcal{N}}

\newcommand{\mcT}{\mathcal{T}}
\newcommand{\mcX}{\mathcal{X}}
\newcommand{\mcY}{\mathcal{Y}}



\newcommand{\bbA}{\mathbb{bA}}

\newcommand{\bbZ}{\mathbb{bZ}}

\newcommand{\isom}{\cong}

\newcommand{\dis}{\displaystyle}





\newcommand{\tlowername}[2]%
{$\stackrel{\makebox[1pt]{#1}}%
{\begin{picture}(0,0)%
\put(0,0){\makebox(0,6)[t]{\makebox[1pt]{$#2$}}}%
\end{picture}}$}%

%

%

%

%


\newcommand{\AR}[1]%
{\begin{picture}(#1,0)%
\put(0,0){\vector(1,0){#1}}%
\end{picture}}%

\newcommand{\DOTAR}[1]%
{\NUMBEROFDOTS=#1%
\divide\NUMBEROFDOTS by 3%
\begin{picture}(#1,0)%
\multiput(0,0)(3,0){\NUMBEROFDOTS}{\circle*{1}}%
\put(#1,0){\vector(1,0){0}}%
\end{picture}}%

\newcommand{\MONO}[1]%
{\begin{picture}(#1,0)%
\put(0,0){\vector(1,0){#1}}%
\put(2,-2){\line(0,1){4}}%
\end{picture}}%

\newcommand{\EPI}[1]%
{\begin{picture}(#1,0)(-#1,0)%
\put(-#1,0){\vector(1,0){#1}}%
\put(-6,-2){\line(0,1){4}}%
\end{picture}}%

\newcommand{\BIMO}[1]%
{\begin{picture}(#1,0)(-#1,0)%
\put(-#1,0){\vector(1,0){#1}}%
\put(-6,-2){\line(0,1){4}}%
\put(-#1,-2){\hspace{2pt}\line(0,1){4}}%
\end{picture}}%

\newcommand{\BIAR}[1]%
{\begin{picture}(#1,4)%
\put(0,0){\vector(1,0){#1}}%
\put(0,4){\vector(1,0){#1}}%
\end{picture}}%

\newcommand{\EQL}[1]%
{\begin{picture}(#1,0)%
\put(0,1){\line(1,0){#1}}%
\put(0,-1){\line(1,0){#1}}%
\end{picture}}%

\newcommand{\ADJAR}[1]%
{\begin{picture}(#1,4)%
\put(0,0){\vector(1,0){#1}}%
\put(#1,4){\vector(-1,0){#1}}%
\end{picture}}%


%

%

%

%

%

%

%

%

%

%

%

%

%

%

%

%

%


\newcommand{\BKAR}[1]%
{\begin{picture}(#1,0)%
\put(#1,0){\vector(-1,0){#1}}%
\end{picture}}%

\newcommand{\BKDOTAR}[1]%
{\NUMBEROFDOTS=#1%
\divide\NUMBEROFDOTS by 3%
\begin{picture}(#1,0)%
\multiput(#1,0)(-3,0){\NUMBEROFDOTS}{\circle*{1}}%
\put(0,0){\vector(-1,0){0}}%
\end{picture}}%

\newcommand{\BKMONO}[1]%
{\begin{picture}(#1,0)(-#1,0)%
\put(0,0){\vector(-1,0){#1}}%
\put(-2,-2){\line(0,1){4}}%
\end{picture}}%

\newcommand{\BKEPI}[1]%
{\begin{picture}(#1,0)%
\put(#1,0){\vector(-1,0){#1}}%
\put(6,-2){\line(0,1){4}}%
\end{picture}}%

\newcommand{\BKBIMO}[1]%
{\begin{picture}(#1,0)%
\put(#1,0){\vector(-1,0){#1}}%
\put(6,-2){\line(0,1){4}}%
\put(#1,-2){\hspace{-2pt}\line(0,1){4}}%
\end{picture}}%

\newcommand{\BKBIAR}[1]%
{\begin{picture}(#1,4)%
\put(#1,0){\vector(-1,0){#1}}%
\put(#1,4){\vector(-1,0){#1}}%
\end{picture}}%

\newcommand{\BKADJAR}[1]%
{\begin{picture}(#1,4)%
\put(0,4){\vector(1,0){#1}}%
\put(#1,0){\vector(-1,0){#1}}%
\end{picture}}%


%

%

%

%

%

%

%

%

%

%

%

%

%

%

%

%

%


\newcommand{\lowername}[2]%
{$\stackrel{\makebox[1pt]{#1}}%
{\begin{picture}(0,0)%
\truex{600}%
\put(0,0){\makebox(0,\value{x})[t]{\makebox[1pt]{$#2$}}}%
\end{picture}}$}%

\newcommand{\hcase}[2]%
{\makebox[0pt]%
{\raisebox{-1pt}[0pt][0pt]{#1{#2}}}}%

\newcommand{\Hcase}[3]%
{\makebox[0pt]
{\raisebox{-1pt}[0pt][0pt]%
{$\stackrel{\makebox[0pt]{$\textstyle{#2}$}}{#1{#3}}$}}}%

\newcommand{\hcasE}[3]%
{\makebox[0pt]%
{\raisebox{-9pt}[0pt][0pt]%
{\lowername{#1{#3}}{#2}}}}%

\newcommand{\hbicase}[2]%
{\makebox[0pt]%
{\raisebox{-2.5pt}[0pt][0pt]{#1{#2}}}}%

\newcommand{\Hbicase}[4]%
{\makebox[0pt]
{\raisebox{-10.5pt}[0pt][0pt]%
{$\stackrel{\makebox[0pt]{$\textstyle{#2}$}}%
{\mbox{\lowername{#1{#4}}{#3}}}$}}}%


\newcommand{\EAR}[1]%
{\begin{picture}(#1,0)%
\put(0,0){\vector(1,0){#1}}%
\end{picture}}%

\newcommand{\EDOTAR}[1]%
{\truex{100}\truey{300}%
\NUMBEROFDOTS=#1%
\divide\NUMBEROFDOTS by \value{y}%
\begin{picture}(#1,0)%
\multiput(0,0)(\value{y},0){\NUMBEROFDOTS}%
{\circle*{\value{x}}}%
\put(#1,0){\vector(1,0){0}}%
\end{picture}}%

\newcommand{\EMONO}[1]%
{\begin{picture}(#1,0)%
\put(0,0){\vector(1,0){#1}}%
\truex{300}\truey{600}%
\put(\value{x},-\value{x}){\line(0,1){\value{y}}}%
\end{picture}}%

\newcommand{\EEPI}[1]%
{\begin{picture}(#1,0)(-#1,0)%
\put(-#1,0){\vector(1,0){#1}}%
\truex{300}\truey{600}\truez{800}%
\put(-\value{z},-\value{x}){\line(0,1){\value{y}}}%
\end{picture}}%

\newcommand{\EBIMO}[1]%
{\begin{picture}(#1,0)(-#1,0)%
\put(-#1,0){\vector(1,0){#1}}%
\truex{300}\truey{600}\truez{800}%
\put(-\value{z},-\value{x}){\line(0,1){\value{y}}}%
\put(-#1,-\value{x}){\hspace{3pt}\line(0,1){\value{y}}}%
\end{picture}}%

\newcommand{\EBIAR}[1]%
{\truex{400}%
\begin{picture}(#1,\value{x})%
\put(0,0){\vector(1,0){#1}}%
\put(0,\value{x}){\vector(1,0){#1}}%
\end{picture}}%

\newcommand{\EEQL}[1]%
{\begin{picture}(#1,0)%
\truex{200}%
\put(0,\value{x}){\line(1,0){#1}}%
\put(0,0){\line(1,0){#1}}%
\end{picture}}%

\newcommand{\EADJAR}[1]%
{\truex{400}%
\begin{picture}(#1,\value{x})%
\put(0,0){\vector(1,0){#1}}%
\put(#1,\value{x}){\vector(-1,0){#1}}%
\end{picture}}%


\newcommand{\earv}[1]{\hcase{\EAR}{#100}}%

\newcommand{\ear}%
{\hspace{\SOURCE\unitlength}%
\hcase{\EAR}{\ARROWLENGTH}}%

\newcommand{\Earv}[2]{\Hcase{\EAR}{#1}{#200}}%

\newcommand{\Ear}[1]%
{\hspace{\SOURCE\unitlength}%
\Hcase{\EAR}{#1}{\ARROWLENGTH}}%

%

\newcommand{\eaR}[1]%
{\hspace{\SOURCE\unitlength}%
\hcasE{\EAR}{#1}{\ARROWLENGTH}}%

%

\newcommand{\edotar}%
{\hspace{\SOURCE\unitlength}%
\hcase{\EDOTAR}{\ARROWLENGTH}}%

%

\newcommand{\Edotar}[1]%
{\hspace{\SOURCE\unitlength}%
\Hcase{\EDOTAR}{#1}{\ARROWLENGTH}}%

%

\newcommand{\edotaR}[1]%
{\hspace{\SOURCE\unitlength}%
\hcasE{\EDOTAR}{#1}{\ARROWLENGTH}}%

%

\newcommand{\emono}%
{\hspace{\SOURCE\unitlength}%
\hcase{\EMONO}{\ARROWLENGTH}}%

%

\newcommand{\Emono}[1]%
{\hspace{\SOURCE\unitlength}%
\Hcase{\EMONO}{#1}{\ARROWLENGTH}}%

%

\newcommand{\emonO}[1]%
{\hspace{\SOURCE\unitlength}%
\hcasE{\EMONO}{#1}{\ARROWLENGTH}}%

%

\newcommand{\eepi}%
{\hspace{\SOURCE\unitlength}%
\hcase{\EEPI}{\ARROWLENGTH}}%

%

\newcommand{\Eepi}[1]%
{\hspace{\SOURCE\unitlength}%
\Hcase{\EEPI}{#1}{\ARROWLENGTH}}%

%

\newcommand{\eepI}[1]%
{\hspace{\SOURCE\unitlength}%
\hcasE{\EEPI}{#1}{\ARROWLENGTH}}%

%

\newcommand{\ebimo}%
{\hspace{\SOURCE\unitlength}%
\hcase{\EBIMO}{\ARROWLENGTH}}%

%

\newcommand{\Ebimo}[1]%
{\hspace{\SOURCE\unitlength}%
\Hcase{\EBIMO}{#1}{\ARROWLENGTH}}%

%

\newcommand{\ebimO}[1]%
{\hspace{\SOURCE\unitlength}%
\hcasE{\EBIMO}{#1}{\ARROWLENGTH}}%

%

\newcommand{\eiso}%
{\hspace{\SOURCE\unitlength}%
\Hcase{\EAR}{\cong}{\ARROWLENGTH}}%

%

\newcommand{\Eiso}[1]%
{\hspace{\SOURCE\unitlength}%
\Hcase{\EAR}{\cong#1}{\ARROWLENGTH}}%

%

\newcommand{\eisO}[1]%
{\hspace{\SOURCE\unitlength}%
\hcasE{\EAR}{\cong#1}{\ARROWLENGTH}}%

%

\newcommand{\ebiar}%
{\hspace{\SOURCE\unitlength}%
\hbicase{\EBIAR}{\ARROWLENGTH}}%

%

\newcommand{\Ebiar}[2]%
{\hspace{\SOURCE\unitlength}%
\Hbicase{\EBIAR}{#1}{#2}{\ARROWLENGTH}}%

\newcommand{\eeqlv}[1]{\hcase{\EEQL}{#100}}%

\newcommand{\eeql}%
{\hspace{\SOURCE\unitlength}%
\hbicase{\EEQL}{\ARROWLENGTH}}%

%

\newcommand{\eadjar}%
{\hspace{\SOURCE\unitlength}%
\hbicase{\EADJAR}{\ARROWLENGTH}}%

%

\newcommand{\Eadjar}[2]%
{\hspace{\SOURCE\unitlength}%
\Hbicase{\EADJAR}{#1}{#2}{\ARROWLENGTH}}%


\newcommand{\WAR}[1]%
{\begin{picture}(#1,0)%
\put(#1,0){\vector(-1,0){#1}}%
\end{picture}}%

\newcommand{\WDOTAR}[1]%
{\truex{100}\truey{300}%
\NUMBEROFDOTS=#1%
\divide\NUMBEROFDOTS by \value{y}%
\begin{picture}(#1,0)%
\multiput(#1,0)(-\value{y},0){\NUMBEROFDOTS}%
{\circle*{\value{x}}}%
\put(0,0){\vector(-1,0){0}}%
\end{picture}}%

\newcommand{\WMONO}[1]%
{\begin{picture}(#1,0)(-#1,0)%
\put(0,0){\vector(-1,0){#1}}%
\truex{300}\truey{600}%
\put(-\value{x},-\value{x}){\line(0,1){\value{y}}}%
\end{picture}}%

\newcommand{\WEPI}[1]%
{\begin{picture}(#1,0)%
\put(#1,0){\vector(-1,0){#1}}%
\truex{300}\truey{600}\truez{800}%
\put(\value{z},-\value{x}){\line(0,1){\value{y}}}%
\end{picture}}%

\newcommand{\WBIMO}[1]%
{\begin{picture}(#1,0)%
\put(#1,0){\vector(-1,0){#1}}%
\truex{300}\truey{600}\truez{800}%
\put(\value{z},-\value{x}){\line(0,1){\value{y}}}%
\put(#1,-\value{x}){\hspace{-3pt}\line(0,1){\value{y}}}%
\end{picture}}%

\newcommand{\WBIAR}[1]%
{\truex{400}%
\begin{picture}(#1,\value{x})%
\put(#1,0){\vector(-1,0){#1}}%
\put(#1,\value{x}){\vector(-1,0){#1}}%
\end{picture}}%

\newcommand{\WADJAR}[1]%
{\truex{400}%
\begin{picture}(#1,\value{x})%
\put(0,\value{x}){\vector(1,0){#1}}%
\put(#1,0){\vector(-1,0){#1}}%
\end{picture}}%


%

\newcommand{\war}%
{\hspace{\SOURCE\unitlength}%
\hcase{\WAR}{\ARROWLENGTH}}%

%

\newcommand{\War}[1]%
{\hspace{\SOURCE\unitlength}%
\Hcase{\WAR}{#1}{\ARROWLENGTH}}%

%

\newcommand{\waR}[1]%
{\hspace{\SOURCE\unitlength}%
\hcasE{\WAR}{#1}{\ARROWLENGTH}}%

%

\newcommand{\wdotar}%
{\hspace{\SOURCE\unitlength}%
\hcase{\WDOTAR}{\ARROWLENGTH}}%

%

\newcommand{\Wdotar}[1]%
{\hspace{\SOURCE\unitlength}%
\Hcase{\WDOTAR}{#1}{\ARROWLENGTH}}%

%

\newcommand{\wdotaR}[1]%
{\hspace{\SOURCE\unitlength}%
\hcasE{\WDOTAR}{#1}{\ARROWLENGTH}}%

%

\newcommand{\wmono}%
{\hspace{\SOURCE\unitlength}%
\hcase{\WMONO}{\ARROWLENGTH}}%

%

\newcommand{\Wmono}[1]%
{\hspace{\SOURCE\unitlength}%
\Hcase{\WMONO}{#1}{\ARROWLENGTH}}%

%

\newcommand{\wmonO}[1]%
{\hspace{\SOURCE\unitlength}%
\hcasE{\WMONO}{#1}{\ARROWLENGTH}}%

%

\newcommand{\wepi}%
{\hspace{\SOURCE\unitlength}%
\hcase{\WEPI}{\ARROWLENGTH}}%

%

\newcommand{\Wepi}[1]%
{\hspace{\SOURCE\unitlength}%
\Hcase{\WEPI}{#1}{\ARROWLENGTH}}%

%

\newcommand{\wepI}[1]%
{\hspace{\SOURCE\unitlength}%
\hcasE{\WEPI}{#1}{\ARROWLENGTH}}%

%

\newcommand{\wbimo}%
{\hspace{\SOURCE\unitlength}%
\hcase{\WBIMO}{\ARROWLENGTH}}%

%

\newcommand{\Wbimo}[1]%
{\hspace{\SOURCE\unitlength}%
\Hcase{\WBIMO}{#1}{\ARROWLENGTH}}%

%

\newcommand{\wbimO}[1]%
{\hspace{\SOURCE\unitlength}%
\hcasE{\WBIMO}{#1}{\ARROWLENGTH}}%

%

\newcommand{\wiso}%
{\hspace{\SOURCE\unitlength}%
\Hcase{\WAR}{\cong}{\ARROWLENGTH}}%

%

\newcommand{\Wiso}[1]%
{\hspace{\SOURCE\unitlength}%
\Hcase{\WAR}{#1}{\ARROWLENGTH}}%

%

\newcommand{\wisO}[1]%
{\hspace{\SOURCE\unitlength}%
\hcasE{\WAR}{#1}{\ARROWLENGTH}}%

%

\newcommand{\wbiar}%
{\hspace{\SOURCE\unitlength}%
\hbicase{\WBIAR}{\ARROWLENGTH}}%

%

\newcommand{\Wbiar}[2]%
{\hspace{\SOURCE\unitlength}%
\Hbicase{\WBIAR}{#1}{#2}{\ARROWLENGTH}}%

%

\newcommand{\weql}%
{\hspace{\SOURCE\unitlength}%
\hbicase{\EEQL}{\ARROWLENGTH}}%

%

\newcommand{\wadjar}%
{\hspace{\SOURCE\unitlength}%
\hbicase{\WADJAR}{\ARROWLENGTH}}%

%

\newcommand{\Wadjar}[2]%
{\hspace{\SOURCE\unitlength}%
\Hbicase{\WADJAR}{#1}{#2}{\ARROWLENGTH}}%


\newcommand{\vcase}[2]{#1{#2}}%

\newcommand{\Vcase}[3]{\makebox[0pt]%
{\makebox[0pt][r]{\raisebox{0pt}[0pt][0pt]{${#2}\hspace{2pt}$}}}#1{#3}}%

\newcommand{\vcasE}[3]{\makebox[0pt]%
{#1{#3}\makebox[0pt][l]{\raisebox{0pt}[0pt][0pt]{\hspace{2pt}$#2$}}}}%

\newcommand{\vbicase}[2]{\makebox[0pt]{{#1{#2}}}}%

\newcommand{\Vbicase}[4]{\makebox[0pt]%
{\makebox[0pt][r]{\raisebox{0pt}[0pt][0pt]{$#2$\hspace{4pt}}}#1{#4}%
\makebox[0pt][l]{\raisebox{0pt}[0pt][0pt]{\hspace{5pt}$#3$}}}}%


\newcommand{\SAR}[1]%
{\begin{picture}(0,0)%
\put(0,0){\makebox(0,0)%
{\begin{picture}(0,#1)%
\put(0,#1){\vector(0,-1){#1}}%
\end{picture}}}\end{picture}}%

\newcommand{\SDOTAR}[1]%
{\truex{100}\truey{300}%
\NUMBEROFDOTS=#1%
\divide\NUMBEROFDOTS by \value{y}%
\begin{picture}(0,0)%
\put(0,0){\makebox(0,0)%
{\begin{picture}(0,#1)%
\multiput(0,#1)(0,-\value{y}){\NUMBEROFDOTS}%
{\circle*{\value{x}}}%
\put(0,0){\vector(0,-1){0}}%
\end{picture}}}\end{picture}}%

\newcommand{\SMONO}[1]%
{\begin{picture}(0,0)%
\put(0,0){\makebox(0,0)%
{\begin{picture}(0,#1)%
\put(0,#1){\vector(0,-1){#1}}%
\truex{300}\truey{600}%
\put(0,#1){\begin{picture}(0,0)%
\put(-\value{x},-\value{x}){\line(1,0){\value{y}}}\end{picture}}%
\end{picture}}}\end{picture}}%

\newcommand{\SEPI}[1]%
{\begin{picture}(0,0)%
\put(0,0){\makebox(0,0)%
{\begin{picture}(0,#1)%
\put(0,#1){\vector(0,-1){#1}}%
\truex{300}\truey{600}\truez{800}%
\put(-\value{x},\value{z}){\line(1,0){\value{y}}}%
\end{picture}}}\end{picture}}%

\newcommand{\SBIMO}[1]%
{\begin{picture}(0,0)%
\put(0,0){\makebox(0,0)%
{\begin{picture}(0,#1)%
\put(0,#1){\vector(0,-1){#1}}%
\truex{300}\truey{600}\truez{800}%
\put(0,#1){\begin{picture}(0,0)%
\put(-\value{x},-\value{x}){\line(1,0){\value{y}}}\end{picture}}%
\put(-\value{x},\value{z}){\line(1,0){\value{y}}}%
\end{picture}}}\end{picture}}%

\newcommand{\SBIAR}[1]%
{\begin{picture}(0,0)%
\truex{200}%
\put(0,0){\makebox(0,0)%
{\begin{picture}(0,#1)\put(-\value{x},#1){\vector(0,-1){#1}}%
\put(\value{x},#1){\vector(0,-1){#1}}%
\end{picture}}}\end{picture}}%

\newcommand{\SEQL}[1]%
{\begin{picture}(0,0)%
\truex{100}%
\put(0,0){\makebox(0,0)%
{\begin{picture}(0,#1)\put(-\value{x},#1){\line(0,-1){#1}}%
\put(\value{x},#1){\line(0,-1){#1}}%
\end{picture}}}\end{picture}}%

%


\newcommand{\sarv}[1]{\vcase{\SAR}{#100}}%

\newcommand{\sar}{\sarv{50}}%

\newcommand{\Sarv}[2]{\Vcase{\SAR}{#1}{#200}}%

\newcommand{\Sar}[1]{\Sarv{#1}{50}}%

\newcommand{\saRv}[2]{\vcasE{\SAR}{#1}{#200}}%

\newcommand{\saR}[1]{\saRv{#1}{50}}%

%

%

%

%

%

%

%

%

%

%

%

%

%

%

%

%

%

%

%

%

%

%

%

%

%

%

\newcommand{\Sisov}[2]%
{\Vbicase{\SAR}{#1\hspace{-2pt}}{\hspace{-2pt}\cong}{#200}}%

%

%

%

%

%

\newcommand{\seqlv}[1]{\vbicase{\SEQL}{#100}}%

\newcommand{\seql}{\seqlv{50}}%

%

%

%

%


\newcommand{\NAR}[1]%
{\begin{picture}(0,0)%
\put(0,0){\makebox(0,0)%
{\begin{picture}(0,#1)\put(0,0){\vector(0,1){#1}}%
\end{picture}}}\end{picture}}%

\newcommand{\NDOTAR}[1]%
{\truex{100}\truey{300}%
\NUMBEROFDOTS=#1%
\divide\NUMBEROFDOTS by \value{y}%
\begin{picture}(0,0)%
\put(0,0){\makebox(0,0)%
{\begin{picture}(0,#1)%
\multiput(0,0)(0,\value{y}){\NUMBEROFDOTS}%
{\circle*{\value{x}}}%
\put(0,#1){\vector(0,1){0}}%
\end{picture}}}\end{picture}}%

\newcommand{\NMONO}[1]%
{\begin{picture}(0,0)%
\put(0,0){\makebox(0,0)%
{\begin{picture}(0,#1)%
\put(0,0){\vector(0,1){#1}}%
\truex{300}\truey{600}%
\put(-\value{x},\value{x}){\line(1,0){\value{y}}}%
\end{picture}}}%
\end{picture}}%

\newcommand{\NEPI}[1]%
{\begin{picture}(0,0)%
\put(0,0){\makebox(0,0)%
{\begin{picture}(0,#1)%
\put(0,0){\vector(0,1){#1}}%
\truex{300}\truey{600}\truez{800}%
\put(0,#1){\begin{picture}(0,0)%
\put(-\value{x},-\value{z}){\line(1,0){\value{y}}}\end{picture}}%
\end{picture}}}\end{picture}}%

\newcommand{\NBIMO}[1]%
{\begin{picture}(0,0)%
\put(0,0){\makebox(0,0)%
{\begin{picture}(0,#1)%
\put(0,0){\vector(0,1){#1}}%
\truex{300}\truey{600}\truez{800}%
\put(-\value{x},\value{x}){\line(1,0){\value{y}}}%
\put(0,#1){\begin{picture}(0,0)%
\put(-\value{x},-\value{z}){\line(1,0){\value{y}}}\end{picture}}%
\end{picture}}}\end{picture}}%

\newcommand{\NBIAR}[1]%
{\begin{picture}(0,0)%
\truex{200}%
\put(0,0){\makebox(0,0)%
{\begin{picture}(0,#1)\put(-\value{x},0){\vector(0,1){#1}}%
\put(\value{x},0){\vector(0,1){#1}}%
\end{picture}}}\end{picture}}%

\newcommand{\Nisov}[2]%
{\Vbicase{\NAR}{#1\hspace{-2pt}}{\hspace{-2pt}\cong}{#200}}%

%

%

%

%

%

%

%

%

%

%

%


\newcommand{\fdcase}[3]{\begin{picture}(0,0)%
\put(0,-150){#1}%
\truex{200}\truey{600}\truez{600}%
\put(-\value{x},-\value{x}){\makebox(0,\value{z})[r]{${#2}$}}%
\put(\value{x},-\value{y}){\makebox(0,\value{z})[l]{${#3}$}}%
\end{picture}}%

\newcommand{\fdbicase}[3]{\begin{picture}(0,0)%
\put(0,-150){#1}%
\truex{800}\truey{50}%
\put(-\value{x},\value{y}){${#2}$}%
\truex{200}\truey{950}%
\put(\value{x},-\value{y}){${#3}$}%
\end{picture}}%


%

\newcommand{\NEDOTAR}%
{\truex{100}\truey{212}%
\NUMBEROFDOTS=5800%
\divide\NUMBEROFDOTS by \value{y}%
\begin{picture}(0,0)%
\multiput(-2900,-2900)(\value{y},\value{y}){\NUMBEROFDOTS}%
{\circle*{\value{x}}}%
\put(2900,2900){\vector(1,1){0}}%
\end{picture}}%

%

%

%

%

\newcommand{\NEEQL}{\begin{picture}(0,0)%
\put(-2900,-2900){\begin{picture}(0,0)%
\truex{70}%
\put(-\value{x},\value{x}){\line(1,1){5800}}%
\put(\value{x},-\value{x}){\line(1,1){5800}}%
\end{picture}}\end{picture}}%

%

%


%

%

%

%

%

%

%

%

%

%

%

%

%

%

%

%

%

%

%

%

%

%


%

%

%


\newcommand{\SWAR}{\begin{picture}(0,0)%
\put(2900,2900){\vector(-1,-1){5800}}%
\end{picture}}%

\newcommand{\SWDOTAR}%
{\truex{100}\truey{212}%
\NUMBEROFDOTS=5800%
\divide\NUMBEROFDOTS by \value{y}%
\begin{picture}(0,0)%
\multiput(2900,2900)(-\value{y},-\value{y}){\NUMBEROFDOTS}%
{\circle*{\value{x}}}%
\put(-2900,-2900){\vector(-1,-1){0}}%
\end{picture}}%

%

%

%

%

%

\newcommand{\SWARV}[1]{\begin{picture}(0,0)%
\put(0,0){\makebox(0,0){\begin{picture}(#1,#1)%
\put(#1,#1){\vector(-1,-1){#1}}\end{picture}}}%
\end{picture}}%


%

\newcommand{\Swar}[1]{\fdcase{\SWAR}{#1}{}}%

\newcommand{\swaR}[1]{\fdcase{\SWAR}{}{#1}}%

\newcommand{\swdotar}{\fdcase{\SWDOTAR}{}{}}%

%

%

%

%

%

%

%

%

%

%

%

%

%

%

%

\newcommand{\sweql}{\fdbicase{\NEEQL}{}{}}%

%

%


\newcommand{\swarv}[1]{\fdcase{\SWARV{#100}}{}{}}%

\newcommand{\Swarv}[2]{\fdcase{\SWARV{#200}}{#1}{}}%

\newcommand{\swaRv}[2]{\fdcase{\SWARV{#200}}{}{#1}}%


\newcommand{\sdcase}[3]{\begin{picture}(0,0)%
\put(0,-150){#1}%
\truex{100}\truez{600}%
\put(\value{x},\value{x}){\makebox(0,\value{z})[l]{${#2}$}}%
\truex{300}\truey{800}%
\put(-\value{x},-\value{y}){\makebox(0,\value{z})[r]{${#3}$}}%
\end{picture}}%

%


\newcommand{\SEAR}{\begin{picture}(0,0)%
\put(-2900,2900){\vector(1,-1){5800}}%
\end{picture}}%

\newcommand{\SEDOTAR}%
{\truex{100}\truey{212}%
\NUMBEROFDOTS=5800%
\divide\NUMBEROFDOTS by \value{y}%
\begin{picture}(0,0)%
\multiput(-2900,2900)(\value{y},-\value{y}){\NUMBEROFDOTS}%
{\circle*{\value{x}}}%
\put(2900,-2900){\vector(1,-1){0}}%
\end{picture}}%

%

%

%

%

%

%

\newcommand{\SEARV}[1]{\begin{picture}(0,0)%
\put(0,0){\makebox(0,0){\begin{picture}(#1,#1)%
\put(0,#1){\vector(1,-1){#1}}\end{picture}}}%
\end{picture}}%


%

\newcommand{\Sear}[1]{\sdcase{\SEAR}{#1}{}}%

\newcommand{\seaR}[1]{\sdcase{\SEAR}{}{#1}}%

%

%

%

%

%

%

%

%

%

%

%

%

%

%

%

%

%

%

%


%

\newcommand{\Searv}[2]{\sdcase{\SEARV{#200}}{#1}{}}%

\newcommand{\seaRv}[2]{\sdcase{\SEARV{#200}}{}{#1}}%


%

\newcommand{\NWDOTAR}%
{\truex{100}\truey{212}%
\NUMBEROFDOTS=5800%
\divide\NUMBEROFDOTS by \value{y}%
\begin{picture}(0,0)%
\multiput(2900,-2900)(-\value{y},\value{y}){\NUMBEROFDOTS}%
{\circle*{\value{x}}}%
\put(-2900,2900){\vector(-1,1){0}}%
\end{picture}}%

\newcommand{\ENEAR}[2]%
{\makebox[0pt]{\begin{picture}(0,0)%
\put(0,-150){\makebox(0,0){\begin{picture}(0,0)%
\put(-6600,-3300){\vector(2,1){13200}}%
\truex{200}\truey{800}\truez{600}%
\put(-\value{x},\value{x}){\makebox(0,\value{z})[r]{${#1}$}}%
\put(\value{x},-\value{y}){\makebox(0,\value{z})[l]{${#2}$}}%
\end{picture}}}\end{picture}}}%

%

%

%

\newcommand{\ESEAR}[2]%
{\makebox[0pt]{\begin{picture}(0,0)%
\put(0,-150){\makebox(0,0){\begin{picture}(0,0)%
\put(-6600,3300){\vector(2,-1){13200}}%
\truex{200}\truey{800}\truez{600}%
\put(\value{x},\value{x}){\makebox(0,\value{z})[l]{${#1}$}}%
\put(-\value{x},-\value{y}){\makebox(0,\value{z})[r]{${#2}$}}%
\end{picture}}}\end{picture}}}%

%

%

%

\newcommand{\WNWAR}[2]%
{\makebox[0pt]{\begin{picture}(0,0)%
\put(0,-150){\makebox(0,0){\begin{picture}(0,0)%
\put(6600,-3300){\vector(-2,1){13200}}%
\truex{200}\truey{800}\truez{600}%
\put(\value{x},\value{x}){\makebox(0,\value{z})[l]{${#1}$}}%
\put(-\value{x},-\value{y}){\makebox(0,\value{z})[r]{${#2}$}}%
\end{picture}}}\end{picture}}}%

%

%

%

\newcommand{\WSWAR}[2]%
{\makebox[0pt]{\begin{picture}(0,0)%
\put(0,-150){\makebox(0,0){\begin{picture}(0,0)%
\put(6600,3300){\vector(-2,-1){13200}}%
\truex{200}\truey{800}\truez{600}%
\put(-\value{x},\value{x}){\makebox(0,\value{z})[r]{${#1}$}}%
\put(\value{x},-\value{y}){\makebox(0,\value{z})[l]{${#2}$}}%
\end{picture}}}\end{picture}}}%

%

%

%



\newcommand{\NNEAR}[2]%
{\raisebox{-1pt}[0pt][0pt]{\begin{picture}(0,0)%
\put(0,0){\makebox(0,0){\begin{picture}(0,0)%
\put(-3300,-6600){\vector(1,2){6600}}%
\truex{100}\truez{600}%
\put(-\value{x},\value{x}){\makebox(0,\value{z})[r]{${#1}$}}%
\put(\value{x},-\value{z}){\makebox(0,\value{z})[l]{${#2}$}}%
\end{picture}}}\end{picture}}}%

%

%

%

\newcommand{\SSWAR}[2]%
{\raisebox{-1pt}[0pt][0pt]{\begin{picture}(0,0)%
\put(0,0){\makebox(0,0){\begin{picture}(0,0)%
\put(3300,6600){\vector(-1,-2){6600}}%
\truex{100}\truez{600}%
\put(-\value{x},\value{x}){\makebox(0,\value{z})[r]{${#1}$}}%
\put(\value{x},-\value{z}){\makebox(0,\value{z})[l]{${#2}$}}%
\end{picture}}}\end{picture}}}%

%

%

%

\newcommand{\SSEAR}[2]%
{\raisebox{-1pt}[0pt][0pt]{\begin{picture}(0,0)%
\put(0,0){\makebox(0,0){\begin{picture}(0,0)%
\put(-3300,6600){\vector(1,-2){6600}}%
\truex{200}\truez{600}%
\put(\value{x},\value{x}){\makebox(0,\value{z})[l]{${#1}$}}%
\put(-\value{x},-\value{z}){\makebox(0,\value{z})[r]{${#2}$}}%
\end{picture}}}\end{picture}}}%

%

%

%

\newcommand{\NNWAR}[2]%
{\raisebox{-1pt}[0pt][0pt]{\begin{picture}(0,0)%
\put(0,0){\makebox(0,0){\begin{picture}(0,0)%
\put(3300,-6600){\vector(-1,2){6600}}%
\truex{200}\truez{600}%
\put(\value{x},\value{x}){\makebox(0,\value{z})[l]{${#1}$}}%
\put(-\value{x},-\value{z}){\makebox(0,\value{z})[r]{${#2}$}}%
\end{picture}}}\end{picture}}}%

%

%

%



\newcommand{\Necurve}[2]%
{\begin{picture}(0,0)%
\truex{1300}\truey{2000}\truez{200}%
\put(0,\value{x}){\oval(#200,\value{y})[t]}%
\put(0,\value{x}){\makebox(0,0){\begin{picture}(#200,0)%
\put(#200,0){\vector(0,-1){\value{z}}}%
\put(0,0){\line(0,-1){\value{z}}}\end{picture}}}%
\truex{2500}%
\put(0,\value{x}){\makebox(0,0)[b]{${#1}$}}%
\end{picture}}%

%

\newcommand{\Nwcurve}[2]%
{\begin{picture}(0,0)%
\truex{1300}\truey{2000}\truez{200}%
\put(0,\value{x}){\oval(#200,\value{y})[t]}%
\put(0,\value{x}){\makebox(0,0){\begin{picture}(#200,0)%
\put(#200,0){\line(0,-1){\value{z}}}%
\put(0,0){\vector(0,-1){\value{z}}}\end{picture}}}%
\truex{2500}%
\put(0,\value{x}){\makebox(0,0)[b]{${#1}$}}%
\end{picture}}%

%

\newcommand{\Securve}[2]%
{\begin{picture}(0,0)%
\truex{1300}\truey{2000}\truez{200}%
\put(0,-\value{x}){\oval(#200,\value{y})[b]}%
\put(0,-\value{x}){\makebox(0,0){\begin{picture}(#200,0)%
\put(#200,0){\vector(0,1){\value{z}}}%
\put(0,0){\line(0,1){\value{z}}}\end{picture}}}%
\truex{2500}%
\put(0,-\value{x}){\makebox(0,0)[t]{${#1}$}}%
\end{picture}}%

%

\newcommand{\Swcurve}[2]%
{\begin{picture}(0,0)%
\truex{1300}\truey{2000}\truez{200}%
\put(0,-\value{x}){\oval(#200,\value{y})[b]}%
\put(0,-\value{x}){\makebox(0,0){\begin{picture}(#200,0)%
\put(#200,0){\line(0,1){\value{z}}}%
\put(0,0){\vector(0,1){\value{z}}}\end{picture}}}%
\truex{2500}%
\put(0,-\value{x}){\makebox(0,0)[t]{${#1}$}}%
\end{picture}}%

%



\newcommand{\Escurve}[2]%
{\begin{picture}(0,0)%
\truex{1400}\truey{2000}\truez{200}%
\put(\value{x},0){\oval(\value{y},#200)[r]}%
\put(\value{x},0){\makebox(0,0){\begin{picture}(0,#200)%
\put(0,0){\vector(-1,0){\value{z}}}%
\put(0,#200){\line(-1,0){\value{z}}}\end{picture}}}%
\truex{2500}%
\put(\value{x},0){\makebox(0,0)[l]{${#1}$}}%
\end{picture}}%

%

\newcommand{\Encurve}[2]%
{\begin{picture}(0,0)%
\truex{1400}\truey{2000}\truez{200}%
\put(\value{x},0){\oval(\value{y},#200)[r]}%
\put(\value{x},0){\makebox(0,0){\begin{picture}(0,#200)%
\put(0,0){\line(-1,0){\value{z}}}%
\put(0,#200){\vector(-1,0){\value{z}}}\end{picture}}}%
\truex{2500}%
\put(\value{x},0){\makebox(0,0)[l]{${#1}$}}%
\end{picture}}%

%

\newcommand{\Wscurve}[2]%
{\begin{picture}(0,0)%
\truex{1300}\truey{2000}\truez{200}%
\put(-\value{x},0){\oval(\value{y},#200)[l]}%
\put(-\value{x},0){\makebox(0,0){\begin{picture}(0,#200)%
\put(0,0){\vector(1,0){\value{z}}}%
\put(0,#200){\line(1,0){\value{z}}}\end{picture}}}%
\truex{2400}%
\put(-\value{x},0){\makebox(0,0)[r]{${#1}$}}%
\end{picture}}%

%

\newcommand{\Wncurve}[2]%
{\begin{picture}(0,0)%
\truex{1300}\truey{2000}\truez{200}%
\put(-\value{x},0){\oval(\value{y},#200)[l]}%
\put(-\value{x},0){\makebox(0,0){\begin{picture}(0,#200)%
\put(0,0){\line(1,0){\value{z}}}%
\put(0,#200){\vector(1,0){\value{z}}}\end{picture}}}%
\truex{2400}%
\put(-\value{x},0){\makebox(0,0)[r]{${#1}$}}%
\end{picture}}%

%



%

%


\newcount\SCALE%

\newcount\NUMBER%

\newcount\LINE%

\newcount\COLUMN%

\newcount\WIDTH%

\newcount\SOURCE%

\newcount\ARROW%

\newcount\TARGET%

\newcount\ARROWLENGTH%

\newcount\NUMBEROFDOTS%

\newcounter{x}%

\newcounter{y}%

\newcounter{z}%

\newcounter{horizontal}%

\newcounter{vertical}%

\newskip\itemlength%

\newskip\firstitem%

\newskip\seconditem%

\newcommand{\printarrow}{}%


\newcommand{\truex}[1]{%
\NUMBER=#1%
\multiply\NUMBER by 100%
\divide\NUMBER by \SCALE%
\setcounter{x}{\NUMBER}}%

\newcommand{\truey}[1]{%
\NUMBER=#1%
\multiply\NUMBER by 100%
\divide\NUMBER by \SCALE%
\setcounter{y}{\NUMBER}}%

\newcommand{\truez}[1]{%
\NUMBER=#1%
\multiply\NUMBER by 100%
\divide\NUMBER by \SCALE%
\setcounter{z}{\NUMBER}}%

\newcommand{\changecounters}[1]{%
\SOURCE=\ARROW%
\ARROW=\TARGET%
\settowidth{\itemlength}{#1}%
\ifdim \itemlength > 2800\unitlength%
\addtolength{\itemlength}{-2800\unitlength}%
\TARGET=\itemlength%
\divide\TARGET by 1310%
\multiply\TARGET by 100%
\divide\TARGET by \SCALE%
\else%
\TARGET=0%
\fi%
\ARROWLENGTH=5000%
\advance\ARROWLENGTH by -\SOURCE%
\advance\ARROWLENGTH by -\TARGET%
\advance\SOURCE by -\TARGET}%

\newcommand{\initialize}[1]{%
\LINE=0%
\COLUMN=0%
\WIDTH=0%
\ARROW=0%
\TARGET=0%
\changecounters{#1}%
\renewcommand{\printarrow}{#1}%
\begin{center}%
\vspace{10pt}%
\begin{picture}(0,0)}%

\newcommand{\DIAG}[1]{%
\SCALE=100%
\setlength{\unitlength}{655sp}%
\initialize{\mbox{$#1$}}}%

\newcommand{\DIAGV}[2]{%
\SCALE=#1%
\setlength{\unitlength}{655sp}%
\multiply\unitlength by \SCALE%
\divide\unitlength by 100%
\initialize{\mbox{$#2$}}}%

\newcommand{\n}[1]{%
\changecounters{\mbox{$#1$}}%
\put(\COLUMN,\LINE){\makebox(0,0){\printarrow}}%
\thinlines%
\renewcommand{\printarrow}{\mbox{$#1$}}%
\advance\COLUMN by 4000}%

\newcommand{\nn}[1]{%
\put(\COLUMN,\LINE){\makebox(0,0){\printarrow}}%
\thinlines%
\ifnum \WIDTH < \COLUMN%
\WIDTH=\COLUMN%
\else%
\fi%
\advance\LINE by -4000%
\COLUMN=0%
\ARROW=0%
\TARGET=0%
\changecounters{\mbox{$#1$}}%
\renewcommand{\printarrow}{\mbox{$#1$}}}%

\newcommand{\conclude}{%
\put(\COLUMN,\LINE){\makebox(0,0){\printarrow}}%
\thinlines%
\ifnum \WIDTH < \COLUMN%
\WIDTH=\COLUMN%
\else%
\fi%
\setcounter{horizontal}{\WIDTH}%
\setcounter{vertical}{-\LINE}%
\end{picture}}%

\newcommand{\diag}{%
\conclude%
\raisebox{0pt}[0pt][\value{vertical}\unitlength]{}%
\hspace*{\value{horizontal}\unitlength}%
\vspace{10pt}%
\end{center}%
\setlength{\unitlength}{1pt}}%

\newcommand{\diagv}[3]{%
\conclude%
\NUMBER=#1%
\rule{0pt}{\NUMBER pt}%
\hspace*{-#2pt}%
\raisebox{0pt}[0pt][\value{vertical}\unitlength]{}%
\hspace*{\value{horizontal}\unitlength}
\NUMBER=#3%
\advance\NUMBER by 10%
\vspace*{\NUMBER pt}%
\end{center}%
\setlength{\unitlength}{1pt}}%

\newcommand{\N}[1]%
{\raisebox{0pt}[7pt][0pt]{$#1$}}%

\newcommand{\movename}[3]{%
\hspace{#2pt}%
\raisebox{#3pt}[5pt][2pt]{\raisebox{#3pt}{$#1$}}%
\hspace{-#2pt}}%

%

%

\newcommand{\crosslength}[2]{%
\settowidth{\firstitem}{#1}%
\settowidth{\seconditem}{#2}%
\ifdim\firstitem < \seconditem%
\itemlength=\seconditem%
\else%
\itemlength=\firstitem%
\fi%
\divide\itemlength by 2%
\hspace{\itemlength}}%

\newcommand{\cross}[2]{%
\crosslength{\mbox{$#1$}}{\mbox{$#2$}}%
\begin{picture}(0,0)%
\put(0,0){\makebox(0,0){$#1$}}%
\thinlines%
\put(0,0){\makebox(0,0){$#2$}}%
\thinlines%
\end{picture}%
\crosslength{\mbox{$#1$}}{\mbox{$#2$}}}%



%

%

%

%

%

%


\begin{document}

\footskip30pt

\date{}

\title{Powers of the Phantom Ideal}

\author{X.H.\ Fu}
\address{School of Mathematics and Statistics, Northeast Normal University, Changchun, China}
\email{fuxianhui@gmail.com}

\author{I. Herzog}
\address{The Ohio State University at Lima, Lima, Ohio, USA}
\email{herzog.23@osu.edu}
\thanks{The first author was partially supported by the National Natural Science Foundation of China, Grant No. 11301062; the second by NSF Grant DMS 12-01523.}

\subjclass[2000]{18E10, 18G25, 16N20, 20C05}

\keywords{Phantom ideal, exact category, Ghost Lemma, special precovering ideal, nilpotency index}

\begin{abstract}
It is proved that if $G$ is a finite group, then the order of $G$ is a proper upper bound for the phantom number of $G.$ This answers a question of Benson and Gnacadja. More specifically, if $k$ is a field whose characteristic divides the order of $G,$ and $\grF$ is the ideal of phantom morphisms in the stable category $k[G]$-$\uMod$ of modules over the group algebra $k[G],$ then $\grF^{n-1} = 0,$ where $n$ is the nilpotency index of the Jacobson radical $J$ of $k[G].$ If $R$ is a semiprimary ring, with $J^n =0,$ and $\grF$ denotes the phantom ideal in the module category $\RMod,$ then $\grF^n$ is the ideal of morphisms that factor through a projective module. If $R$ is a right coherent ring and every cotorsion left $R$-module has a coresolution of length $n$ by pure injective modules, then $\Phi^{n+1}$ is the ideal of morphisms that factor through a flat module. 
        
These results are obtained by introducing the mono-epi (ME) exact structure on the morphisms of an exact category $(\mcA; \mcE).$ This exact structure $(\Arr (\mcA); \ME)$ is used to develop further the ideal approximation theory of $(\mcA; \mcE)$, by proving new versions of Salce's Lemma, the Ghost Lemma of Christensen, and Wakamatsu's Lemma. Salce's Lemma states that if $(\mcA; \mcE)$ has enough injective morphisms and projective morphisms, then the map $\mcI \mapsto \mcI^{\perp}$ on ideals is a bijective correspondence between the class of special precovering ideals of $(\mcA; \mcE)$ and that of its special preenveloping ideals. The exact category $(\Arr (\mcA); \ME)$ of morphisms allows us to introduce the notion of an extension $i \star j$ of morphisms in an exact category $(\mcA; \mcE),$ and the notion of an extension $\mcI \diamond \mcJ$ of ideals of $\mcA.$ The Ghost Lemma, instrumental in proving the consequences above, asserts that the class of special precovering (resp., special preenveloping) ideals is closed under products and extensions and that the bijective correspondence of Salce's Lemma satisfies $(\mcI \mcJ)^{\perp} = \mcJ^{\perp} \diamond \mcI^{\perp}$ and $(\mcI \diamond \mcJ)^{\perp} = \mcJ^{\perp} \mcI^{\perp}.$ Wakamatsu's Lemma is the statement that if a covering ideal $\mcI$ is closed under extensions $\mcI \diamond \mcI = \mcI,$ then $\mcI$ is a special precovering ideal that possesses a syzygy ideal $\grO (\mcI) \subseteq \mcI^{\perp}$ generated by objects. 
\end{abstract}

\maketitle
\pagestyle{plain}

\section{Introduction}

Let $\mcT$ be a triangulated category and $\mcT^c$ the subcategory of compact objects (see~\cite{N}). A morphism $f : X \to Y$ in $\mcT$ is a phantom morphism~\cite[Def 2.4]{N1} if for every morphism $c : C \to X,$ with $C \in \mcT^c,$ the diagram
\DIAGV{80}
{C} \n {\Ear{c}} \n {X} \nn
{} \n {\seaR{0}} \n {\saR{f}} \nn
{} \n {} \n {Y}
\diag
is commutative. The first examples of phantom morphisms arose in algebraic topology (see~\cite{McG}), in the work of Adams and Walker~\cite{AW}, with $\mcT$ the category of homotopy spectra. In the representation theory of groups, Benson and Gnacadja~\cite{BG2} discovered examples of phantom morphisms when $\mcT = k[G]$-$\uMod$ is the stable category of modules over the group algebra $k[G],$ where $k$ is a field.

For the triangulated category $\mcT = k[G]$-$\uMod,$ phantom morphisms were first investigated by Gnacadja~\cite{Gn}. A morphism 
$f : X \to Y$ in $k[G]$-$\Mod$ is a phantom morphism, when considered as a morphism in $k[G]$-$\uMod,$ if for every finitely presented left $k[G]$-module $C,$ the composition $fc$ factors through some projective module $P,$
\DIAGV{80}
{C} \n {\Ear{c}} \n {X} \nn
{\sar} \n {} \n {\saR{f}} \nn
{P} \n {\ear} \n {Y.}
\diag
The second author~\cite{H} considered the same condition on a morphism $f : X \to Y$ in the category $\RMod$ of left modules over an associative ring $R$ with identity. This is equivalent~\cite[Prop 36]{FGHT} to the condition that the induced natural transformation $\Tor_1^R (-,f) : \Tor_1^R (-,X) \to \Tor_1^R (-,Y)$ vanishes, so that in the context of $R$-modules a phantom morphism is the morphism version of a flat module.

Phantom morphisms constitute an ideal, denoted by $\grF,$ in both a triangulated category $\mcT$ and the module category $\RMod.$ Neeman~\cite{N1} was the first to consider phantom morphisms in a general setting, introducing conditions sufficient for the triangulated category $\mcT$ to be phantomless, $\grF = 0.$ For the category of homotopy spectra, as well as more general triangulated categories satisfying Brown Representability, Christensen and Strickland~\cite[Thm 1.2]{ChS} and Neeman~\cite[Cor 4.4]{N2} proved that $\grF^2 = 0.$ Recently, Muro and Raventos~\cite[Cor 6.26]{MR} showed that if the subcategory of compact objects is replaced by the (more general) notion of the category of $\gra$-compact objects, $\gra$ a regular cardinal, then the ideal $\grF_{\gra}$ of $\gra$-phantoms satisfies $(\cap_{n < \gro}\; \grF_{\gra}^n)^2 = 0.$ Benson~\cite{Ben}, however, found a class of group algebras for which $\grF^2 \neq 0.$ 

Benson and Gnacadja~\cite{BG1} noted that if the pure global dimension of the category $k[G]$-$\Mod$ is bounded by $n,$ then $\grF^{n+1} = 0$ in the stable category $k[G]$-$\uMod.$ In most cases, it is possible to artifically boost the pure global dimension of a group algebra $k[G]$ by increasing the cardinality of $k,$ but their work suggests that there exists a finite bound, the {\em phantom number} of $G,$ for the nilpotency index of the phantom ideal $\grF$ in $k[G]$-$\uMod$ for every field $k.$ This is confirmed by the theory developed in this article as follows. Recall that a ring $R$ is {\em semiprimary} if the Jacobson radical $J = J(R)$ is nilpotent and $R/J$ is semisimple artinian.  \bigskip

\noindent{\bf Theorem~\ref{semiprimary}.} {\it If $R$ is a semiprimary ring with $J^n = 0,$ then $\grF^n = \langle \RProj \rangle$ in the module category $\RMod.$} \bigskip

The proof follows the strategy used by Chebolu, Christensen and Min\'{a}\v{c}~\cite{CCM} to obtain a similar bound for the ghost number of a finite $p$-group. If $M$ is a left $R$-module, then the Loewy series $\{ J^iM \}_{i \leq n}$ is a filtration of $M,$ of length at most $n,$ whose factors are semisimple, hence pure injective. One then develops a theory of special precovering ideals in an exact category (in this case $\RMod$) to prove an analogue (Theorem~\ref{the ghost lemma}) of the Ghost Lemma~\cite[Thm 1.1]{Chr}. This version of the Ghost Lemma implies that every $R$-module $M$ that can be filtered by a series of length $n$ whose factors are pure injective is right $\Ext$-orthogonal to $\grF^n.$

For the special case of a Quasi-Frobenius ring~\cite{NY}, this bound on the nilpotency index of the phantom ideal in the stable category $R$-$\uMod$ is lowered by $1,$ because every module decomposes as a direct sum $M = E \oplus M'$ where $E$ is projective/injective and the Loewy length of $M'$ is bounded by $n-1,$ where $n$ is the nilpotency index of $J.$ \bigskip

\noindent{\bf Theorem~\ref{QF}.} {\it If $R$ is a QF ring with nonzero Jacobson radical $J,$ then $J^n = 0$ implies that $\grF^{n-1} = 0$ in the stable category $R$-$\uMod.$} \bigskip

For example, if $G = \bbZ/2 \times \bbZ/2$ is the Klein $4$-group, and the characteristic of $k$ is $2,$ then $J^3 = 0$ in $k[G],$ so that Theorem~\ref{QF} implies that $\grF^2 = 0$ in the stable category $k[G]$-$\uMod,$ a result established by Benson and Gnacadja~\cite[\S 4.6]{BG1} when $k$ is countable. On the other hand, it is a consequence of the Pure Semisimple Conjecture for QF rings~\cite[Cor 5.3]{H-pps} that a QF ring is phantomless if and only if it is of finite representation type~\cite[Prop 41]{FGHT}. Because the group algebra $k[\bbZ/2 \times \bbZ/2]$ is not of finite representation type~\cite[Thm 4.4.4]{B book}, $\grF \neq 0$ in the stable category. Theorem~\ref{QF} leads to the following positive resolution of a problem~\cite[Question 5.2.3]{BG1} posed by Benson and Gnacadja. \bigskip

\noindent{\bf Corollary~\ref{group bound}.} {\it Let $G$ be a finite group and $k$ a field. If $\grF$ denotes the ideal of phantom morphisms in the stable category $k[G]$-$\uMod$ of modules over the group algebra $k[G],$ then $\grF^{|G| - 1} = 0.$} \bigskip

A proof of Theorem~\ref{QF} can be obtained by applying a dualized form of Christensen's Ghost Lemma to the {\em injective} class 
(see~\cite[\S 2]{Chr}) $(\grF, \mbox{R-}$\underline{\Pinj}$)$ in the triangulated category $k[G]$-$\uMod$ of stable $k[G]$-modules, but Theorem~\ref{semiprimary} covers all artin algebras, and therefore every finite-dimensional algebra, as well as every finite ring. For the class of coherent rings, we build on the work of Xu~\cite{X} to attain the following related criterion, which also improves the bound provided by the left pure global dimension of $R.$ \bigskip

\noindent{\bf Corollary~\ref{Benson-G}.} Let $R$ be a right coherent ring such that every cotorsion left $R$-module $C$ has a coresolution
\DIAGV{60}
{0} \n {\ear} \n {C} \n {\ear} \n {I_0} \n {\ear} \n {I_1} \n {\ear} \n {\cdots} \n {\ear} \n {I_n} \n {\ear} \n {0}
\diag
with each $I_k$ pure injective. Then $\grF^{n+1} = \langle \RFlat \rangle.$

\bigskip

The relationship between phantom morphisms and the theory of purity had already been noted by Christensen and Strickland~\cite{ChS}. For the derived category $D(R)$ of a ring $R$, this was made more precise by Beligiannis~\cite{Bel} and Christensen, Keller and Neeman~\cite{CKN}, who used the difference between the pure global dimension of $R$ and its homological dimension to construct examples where Brown Representability fails. Indeed, the construction by Gray and McGibbon~\cite{GrM} of a phantom preenvelope in the category of homotopy spectra is the suspension of something analogous to a pure syzygy of a module. In a compactly generated triangulated category, H.\ Krause~\cite[Thm D]{Kra} proved the existence of phantom precovers by a dual argument, considering the desuspension of something analogous to the pure cosyzygy of a module. Employing an argument reminiscent of triangle constructions in the stable category of modules over a group algebra $k[G],$ the second author~\cite[Prop 6]{H} proved the existence of phantom morphisms in the module category $\RMod:$ given a left $R$-module $M,$ let $p : R^{(\gra)} \to M$ be an epimorphism from a free $R$-module, and take the pushout along the pure injective envelope $e : K \to \PE (K)$ of the syzygy $K = \grO (M),$
\DIAGV{80}
{0} \n {\ear} \n {K} \n {\ear} \n {R^{(\gra )}} \n {\Ear{p}} \n {M} \n {\ear} \n {0} \nn 
{} \n {} \n {\saR{e}} \n {} \n {\sar} \n {} \n {\seql} \nn 
{0} \n {\ear} \n {\PE (K)} \n {\ear} \n {F} \n {\Ear{\grf}} \n {M} \n {\ear} \n {0.} 
\diag   
The morphism $\grf : F \to M$ is then a phantom precover. This simple construction stands in stark contrast to the technically involved proofs, due to Bican, El Bashir, and Enochs~\cite{BEE} (see also~\cite{ET, ElB}), of the existence of flat precovers in a module category. 

Based on this construction of a phantom precover, Guil Asensio, Torrecillas and the authors formulated a theory~\cite{FGHT} of ideal approximations in the setting of an exact category $(\mcA; \mcE)$ (see~\cite{B, GR}). This theory generalizes to ideals of morphisms the classical theory of approximations, i.e., precovers and preenvelopes, for subcategories of objects, pioneered by Auslander and Smal\o~\cite[Ch VII]{Aus} and Enochs~\cite{En} (see~\cite{BR, EJ, GT, X}). An ideal $\mcI$ of $\mcA$ is {\em precovering} if for every object $A$ in $\mcA$ there exists a deflation $i : I \to A$ in $\mcI$ such that every morphism $i' : I' \to A$ in $\mcI$ factors through $i,$
\DIAGV{70}
{} \n {} \n {I'} \nn
{} \n {\swdotar} \n {\saR{i'}} \nn
{I} \n {\Ear{i}} \n {A.}
\diag   
Ideal Approximation Theory~\cite{FGHT, H2, O, EGO} for exact categories is devoted to the study of precovering ideals, and the dual notion of preenveloping ideals, with emphasis on the notion of a special precovering (resp., special preenveloping) ideal. A {\em special} $\mcI$-precover of an object $A$ is a deflation $i_1 : I_1 \to A$ that occurs in a conflation $\Xi$ arising as a pushout 
\DIAGV{70}
{\Xi' :} \n {K_0} \n {\ear} \n {I_0} \n {\ear} \n {A} \nn
{} \n {\saR{k}} \n {} \n {\sar} \n {} \n {\seql} \nn
{\Xi :} \n {K_1} \n {\ear} \n {I_1} \n {\Ear{i_1}} \n {A} 
\diag
along a morphism $k \in \mcI^{\perp}.$ An ideal $\mcI$ is {\em special precovering} (resp., {\em special preenveloping}) if every object has a special $\mcI$-precover (resp., {\em special $\mcI$-preenvelope}).

In this article, we develop Ideal Approximation Theory further by introducing an exact structure on the category $\Arr (\mcA)$ of morphisms (arrows) of an exact category $(\mcA; \mcE).$ The category $\Arr (\mcA)$ has the natural exact structure whose conflations are the morphisms of conflations in $(\mcA; \mcE).$ This exact category, which we denote by $(\Arr (\mcA); \Arr (\mcE)),$ has been studied by Estrada, Guil Asensio and \"{O}zbek~\cite{EGO}, who observe its shortcomings in their Remark 3.4. In Definition~\ref{mono epi def}, we introduce the notion of a mono-epi (ME) morphism of conflations and denote by $\ME \subseteq \Arr (\mcE)$ the collection of such morphisms of conflations. \bigskip

\noindent {\bf Theorem~\ref{mono-epi}.} {\it The mono-epi substructure $(\Arr (\mcA); \ME) \subseteq (\Arr (\mcA); \Arr (\mcE))$ is exact.} \bigskip 

We use the exact structure on $(\Arr (\mcA); \ME)$ to find a place within Ideal Approximation Theory for three of the pillars of the classical theory: Salce's Lemma~\cite{S}, the Ghost Lemma~\cite{Chr} and Wakamatsu's Lemma~\cite{W}. An ideal version of Salce's Lemma was already proved in~\cite{FGHT} as the implication ($2$) $\Rightarrow$ ($3$) of Theorem 1. The hypotheses are weakened here to obtain the following. \bigskip

\noindent {\bf Theorem~\ref{Salce}.} {\rm (Salce's Lemma)} {\it Let $(\mcA; \mcE)$ be an exact category with enough injective morphisms and enough projective morphisms. The rule $\mcI \mapsto \mcI^{\perp}$ is a bijective correspondence between the class of special precovering ideals $\mcI$ of $(\mcA; \mcE)$ and that of its special preenveloping ideals $\mcK.$ The inverse rule is given by $\mcK \mapsto {^{\perp}}\mcK.$}  
\bigskip

\noindent Just as the classical Salce's Lemma gives rise to the central notion of a complete cotorsion pair, Theorem~\ref{Salce} leads to the notion of a complete ideal cotorsion pair $(\mcI, \mcI^{\perp}),$ where $\mcI$ is a special precovering ideal. The exact structure on $(\Arr (\mcA); \ME)$ allows us to introduce the concept of an extension $i \star j$ of morphisms and, if $\mcI$ and $\mcJ$ are ideals of $\mcA,$ the concept of an extension of ideals $\mcI \diamond \mcJ = \langle i \star j \; | \; i \in \mcI, j \in \mcJ \rangle.$  \bigskip

\noindent {\bf Theorem~\ref{the ghost lemma}.} {\rm (The Ghost Lemma)} {\it Let $(\mcA; \mcE)$ be an exact category with enough injective morphisms and enough projective morphisms. The class of special precovering (resp., preenveloping) ideals is closed under products $\mcI \mcJ$ and extensions $\mcI \diamond \mcJ.$ Moreover, the bijective correspondence $\mcI \mapsto \mcI^{\perp}$ satisfies} 
$$(\mcI \mcJ)^{\perp} = \mcJ^{\perp} \diamond \mcI^{\perp} \;\; \mbox{and} \;\; (\mcI \diamond \mcJ)^{\perp} = \mcJ^{\perp} \mcI^{\perp}.$$ \smallskip

Theorem~\ref{the ghost lemma} is an analogue of Christensen's Ghost Lemma~\cite[Thm 1.1]{Chr}, which serves as the model for several results that play a key role in the respective theories of dimensions of triangulated categories~\cite[Lemma 4.11]{R}, representation dimensions of artin algebras~(\cite{R2, BIKO} and~\cite[Lemma 2.1]{B}), and strongly finitely generated triangulated categories~\cite[Thm 4]{OS}. If $\mcI$ and $\mcJ$ are object ideals, then so is the extension ideal $\mcI \diamond \mcJ$ (Theorem~\ref{object ext}). Theorem~\ref{the ghost lemma} implies that if $(\mcI, \mcI^{\perp})$ and $(\mcJ, \mcJ^{\perp})$ are complete ideal cotorsion pairs such that $\mcI^{\perp}$ and $\mcJ^{\perp}$ are object ideals, then so is
$(\mcI \mcJ, \mcJ^{\perp} \diamond \mcI^{\perp}).$ Such complete ideal cotorsion pairs are the analogues in the present context of the projective classes studied by Christensen. 

In many arguments, we can avoid the hypothesis that there exist enough injective or projective morphisms, by working directly with the syzygy morphism of a special precover or, for a special precovering ideal $\mcI,$ with an ideal $\grO (\mcI) \subseteq \mcI^{\perp}$ generated by syzygy morphisms. For example, we generalize Christensen's Ghost Lemma for projective classes by calling a special precovering ideal $\mcI$ object-special (Definition~\ref{ospi} and Proposition~\ref{object syzygy}) if {\em some} syzygy ideal of $\grO (\mcI)$ is an object ideal, and proving that such ideals are closed under products (Corollary 23). The ability to do this seems to be a virtue of Ideal Approximation Theory, formally expressed by Theorem~\ref{syzygy ideal} and Proposition~\ref{ext by objects}, that is absent in the classical theory. Another example of this phenomenon is the Chain Rule for syzygies (Theorem~\ref{syzygy ext}).

According to Theorem~\ref{the ghost lemma}, the bijective correspondence $\mcI \mapsto \mcI^{\perp}$ of Salce's Lemma associates an idempotent special precovering ideal to a special preenveloping ideal closed under extensions, and conversely. In the last section of the paper, we take up these two classes of ideals, but under the hypothesis that they be covering, rather than special precovering. \bigskip

\noindent {\bf Theorem~\ref{Wakamatsu}.} {\rm (Wakamatsu's Lemma)} {\it Every covering ideal $\mcI,$ closed under extensions, is an object-special precovering ideal.} \bigskip
 
\noindent {\em Acknowledgements.} A debt is owing to Ron Gentle, Mark Hovey and Jan \v{S}\v{t}ov\'{i}\v{c}ek for the insightful advice they gave us on preliminary presentations of our results. We also thank David Benson, who encouraged us to study the phantom number. Part of this project was carried out while the first author was a visitor at The Ohio State University at Lima and while both authors were vistors at Nanjing University; we gratefully acknowledge the hospitality of both institutions.

\section{Preliminaries}

Let $\mcA$ be an additive category. An {\em ideal} $\mcI$ of $\mcA$ is an additive subbifunctor of the additive bifunctor 
$\Hom (-,-) : \mcA^{\op} \times \mcA \to \Ab,$ where $\Ab$ denotes the category of abelian groups: to every pair $(A,B)$ of objects in $\mcA,$ the subfunctor $\mcI$ associates a subgroup $\mcI (A,B) \subseteq \Hom (A,B)$ such that if $g : A \to B$ belongs to $\mcI (A,B),$ then the composition $fgh : X \to Y$ belongs to $\mcI (X,Y),$ whenever $f : B \to Y$ and $h : X \to A$ are morphisms in $\mcA.$ 

If $\mcM$ is a class of morphisms in $\mcA,$ then $\langle \mcM \rangle$ denotes the smallest ideal of $\mcA$ that contains $\mcM.$ For example, the {\em product} of two ideals $\mcI$ and $\mcJ$ of $\mcA$ is given by
$$\mcI \mcJ := \langle ij \; | \; i \in \mcI \; \mbox{and} \; j \in \mcJ \; \mbox{are composable} \; \rangle.$$
A subcategory $\mcX$ of $\mcA$ generates the ideal $\langle \mcX \rangle := \langle 1_X \; | \; X \in \mcX \rangle,$ and an ideal $\mcI$ of $\mcA$ is called an {\em object ideal} if it is of the form $\mcI = \langle \mcX \rangle$ for some subcategory $\mcX$ of $\mcA.$ Equivalently, 
$\mcI = \langle \Ob (\mcI) \rangle.$ 

An {\em additive} subcategory $\mcC \subseteq \mcA$ is a subcategory that is closed under finite direct sums and direct summands. The {\em additive closure} of a subcategory $\mcX$ of $\mcA$ is the smallest additive subcategory $\add (\mcX)$ that contains $\mcX.$ If $\mcX$ is closed under finite direct sums, then an object of $\mcA$ belongs to $\add (\mcX)$ provided it is a summand of some object of $\mcX.$

\begin{proposition} \label{ideal/additive}
Given an ideal $\mcI$ of $\mcA,$ the subcategory $\Ob (\mcI)$ of $\mcA$ is additive. Given a subcategory $\mcC$ closed under finite direct sums, the object ideal $\langle \mcC \rangle$ consists of the morphisms $f : A \to B$ in $\mcA$ that factor as $f : A \to C \to B$ through some object $C$ in $\mcC.$ The rule $\mcC \mapsto \langle \mcC \rangle$ is a bijective correspondence between the class of additive subcategories $\mcC$ of $\mcA$ and the object ideals of $\mcA;$ the inverse rule is given by $\mcI \mapsto \Ob (\mcI).$
\end{proposition}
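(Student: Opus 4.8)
The plan is to treat the three assertions in order, writing $\Ob(\mcI)$ for the class of objects $X$ of $\mcA$ with $1_X \in \mcI(X,X)$; the third assertion will fall out formally once the first two are in hand and one checks that $\mcC \mapsto \langle \mcC \rangle$ and $\mcI \mapsto \Ob(\mcI)$ invert each other.

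First I would check that $\Ob(\mcI)$ is additive. If $1_X \in \mcI$ and $Y$ is a direct summand of $X$, witnessed by $i : Y \to X$ and $p : X \to Y$ with $pi = 1_Y$, then $1_Y = p\,1_X\,i \in \mcI$ by the absorption property defining an ideal (and applying this to isomorphisms shows in passing that $\Ob(\mcI)$ is closed under isomorphism). For closure under finite direct sums, decompose $1_{X \oplus Y} = \iota_X \pi_X + \iota_Y \pi_Y$ in terms of the structural injections and projections; each summand lies in $\mcI(X \oplus Y, X \oplus Y)$, for instance $\iota_X \pi_X = \iota_X\,1_X\,\pi_X$, and so does their sum since $\mcI(X\oplus Y, X\oplus Y)$ is a subgroup of $\Hom(X\oplus Y, X\oplus Y)$; the zero object is covered by $1_0 = 0 \in \mcI$.

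Next, given $\mcC$ closed under finite direct sums, let $\mcF$ be the class of morphisms $f : A \to B$ that factor through some object of $\mcC$. I would verify that $\mcF$ is an ideal: closure under pre- and post-composition is immediate, the zero morphism factors through the zero object (the empty direct sum, which lies in $\mcC$), and if $f_1, f_2 : A \to B$ factor through $C_1, C_2 \in \mcC$ respectively then $f_1 + f_2$ factors through $C_1 \oplus C_2 \in \mcC$ via the induced morphisms $A \to C_1 \oplus C_2$ and $C_1 \oplus C_2 \to B$ --- this is the one point where closure of $\mcC$ under direct sums is used. Since $1_C \in \mcF$ for every $C \in \mcC$ we get $\langle \mcC \rangle \subseteq \mcF$, and since every $f = b\,a$ with $a : A \to C$, $b : C \to B$, $C \in \mcC$ equals $b\,1_C\,a$ we get $\mcF \subseteq \langle \mcC \rangle$; hence $\langle \mcC \rangle = \mcF$.

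For the bijection, the first part shows $\Ob$ carries object ideals to additive subcategories, while $\langle - \rangle$ carries additive subcategories to object ideals by construction. If $\mcC$ is additive then $\mcC \subseteq \Ob(\langle\mcC\rangle)$ trivially, and conversely $1_X \in \langle\mcC\rangle = \mcF$ forces $1_X$ to factor through some $C \in \mcC$, displaying $X$ as a summand of $C$, so $X \in \mcC$; thus $\Ob(\langle\mcC\rangle) = \mcC$. If $\mcI = \langle\mcX\rangle$ is an object ideal then $\mcX \subseteq \Ob(\mcI)$ gives $\mcI \subseteq \langle\Ob(\mcI)\rangle$, while $1_X \in \mcI$ for each $X \in \Ob(\mcI)$ gives $\langle X \rangle \subseteq \mcI$ and hence $\langle\Ob(\mcI)\rangle \subseteq \mcI$; thus $\langle\Ob(\mcI)\rangle = \mcI$, recovering the ``Equivalently'' remark stated just above. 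No step is genuinely hard; the only obstacle worth naming is bookkeeping --- one must keep $\mcC$ (and each generating class) closed under finite direct sums before invoking the factorization description of $\langle\mcC\rangle$, and one must invoke both the subgroup axiom and the two-sided absorption axiom of an ideal in the first part.
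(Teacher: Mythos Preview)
Your proof is correct and follows essentially the same approach as the paper: the same split-idempotent and $1_{X\oplus Y}=\iota_X\pi_X+\iota_Y\pi_Y$ arguments for additivity of $\Ob(\mcI)$, the same direct-sum trick to see that morphisms factoring through $\mcC$ form an ideal, and the same summand argument for $\Ob(\langle\mcC\rangle)=\mcC$. The only cosmetic difference is that you verify directly that the class $\mcF$ of factoring morphisms is an ideal and then sandwich $\langle\mcC\rangle$ between $\mcF$ and itself, whereas the paper appeals to the normal form $\sum_i a_i 1_{C_i} b_i$ for elements of a generated ideal; your version is slightly more self-contained.
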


\begin{proof}
To see that $\Ob (\mcI)$ is closed under direct summands, suppose that $A \oplus B$ belongs to $\Ob (\mcI).$ Let $\iota_A : A \to A \oplus B$ (resp., $\pi_A : A \oplus B \to A$) be the structural injection (resp., projection) associated to the summand $A.$ Then $1_A = \pi_A 1_{A \oplus B} \iota_A$ also belongs to $\mcI.$ To see that $\Ob (\mcI)$ is closed under finite direct sums, note that $1_{A \oplus B} = \iota_A 1_A \pi_A + \iota_B 1_B \pi_B.$

Let $\mcC$ be a subcategory of $\mcA$ that is closed under finite direct sums. A morphism that factors through some object of $\mcC$ clearly belongs to $\langle \mcC \rangle.$ Conversely, every morphism in $\langle \mcC \rangle$ is of the form $\sum_i a_i1_{C_i}b_i$ and therefore factors through the finite direct sum $\oplus_i C_i \in \mcC.$ 

To prove that the given correspondence is bijective, recall that if $\mcI$ is an object ideal, then $\langle \Ob (\mcI) \rangle = \mcI,$ whereas,
if $\mcC$ is an additive subcategory, then an object $A$ belongs to $\Ob (\langle \mcC \rangle)$ if and only if $1_A$ factors through an object 
$C \in \mcC.$ But then $A$ is a direct summand of $C$ and so too belongs to $\mcC.$
\end{proof}

The ideas of the proof of Proposition~\ref{ideal/additive} may also be used to infer the following.

\begin{proposition} \label{add closure}
If $\mcX$ is a subcategory of $\mcA,$ then $\add (\mcX) = \Ob (\langle \mcX \rangle).$
\end{proposition}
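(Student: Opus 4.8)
The plan is to establish the two inclusions $\add(\mcX)\subseteq\Ob(\langle\mcX\rangle)$ and $\Ob(\langle\mcX\rangle)\subseteq\add(\mcX)$ separately, in both cases leaning on the description of $\langle\mcX\rangle$ as the ideal consisting of finite sums of morphisms that factor through objects of $\mcX$.

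For the first inclusion I would begin by noting that $1_X\in\langle\mcX\rangle$ for every $X\in\mcX$, so that $\mcX\subseteq\Ob(\langle\mcX\rangle)$. By Proposition~\ref{ideal/additive} the subcategory $\Ob(\langle\mcX\rangle)$ is additive, and since $\add(\mcX)$ is by definition the smallest additive subcategory of $\mcA$ containing $\mcX$, the inclusion $\add(\mcX)\subseteq\Ob(\langle\mcX\rangle)$ is immediate.

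For the reverse inclusion I would take an object $A\in\Ob(\langle\mcX\rangle)$, that is, with $1_A\in\langle\mcX\rangle$, and write $1_A$ as a finite sum $\sum_i a_i 1_{X_i} b_i$ with each $X_i\in\mcX$, $b_i\colon A\to X_i$ and $a_i\colon X_i\to A$ (exactly as in the proof of Proposition~\ref{ideal/additive}). Assembling the $b_i$ into a single morphism $b\colon A\to\bigoplus_i X_i$ and the $a_i$ into $a\colon\bigoplus_i X_i\to A$ shows that $1_A=ab$ factors through the finite direct sum $\bigoplus_i X_i$. Hence $A$ is a direct summand of $\bigoplus_i X_i$; since $\bigoplus_i X_i$ is a finite direct sum of objects of $\mcX$ it belongs to $\add(\mcX)$, and $\add(\mcX)$ is closed under direct summands, so $A\in\add(\mcX)$. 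The empty-sum case $1_A=0$, i.e.\ $A$ a zero object, is covered since $\add(\mcX)$ contains the zero object.

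The only real subtlety---and the reason the statement is phrased with $\add(\mcX)$ rather than $\mcX$---is that $\mcX$ need not be closed under finite direct sums, so one cannot expect $1_A$ to factor through a single object of $\mcX$; passing to $\bigoplus_i X_i$ is precisely what forces the appearance of the additive closure. I do not anticipate any further obstacle, since the argument is a direct reuse of the bookkeeping already carried out in Proposition~\ref{ideal/additive}.
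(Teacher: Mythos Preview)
Your argument is correct and is precisely the intended one: the paper does not give an explicit proof but merely remarks that ``the ideas of the proof of Proposition~\ref{ideal/additive} may also be used,'' and you have carried out exactly that. Both inclusions are handled just as the paper's outline suggests, including the key point that a general element $1_A = \sum_i a_i 1_{X_i} b_i$ of $\langle \mcX \rangle$ factors through the finite direct sum $\bigoplus_i X_i \in \add(\mcX)$.
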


In this paper, we rely heavily on the theory of exact categories. We closely follow B\"{u}hler's comprehensive treatment~\cite{B} as the standard reference, but we use the terminology of Keller~\cite{GR}. An {\em exact structure} $(\mcA; \mcE)$ on an additive category $\mcA$ consists of a collection $\mcE$ of distinguished kernel-cokernel pairs
\DIAGV{80}
{\Xi :} \n {A} \n {\Ear{m}} \n {B} \n {\Ear{e}} \n {C}
\diag 
called {\em conflations.} The morphism $m$ is called the {\em inflation} of $\Xi;$ the morphism $e$ the {\em deflation.} More generally, a morphism $m$ (resp., $e$) is called an {\em inflation} (resp., {\em deflation}) if it is the inflation (resp., deflation) of some conflation in $\mcE.$ The collection $\mcE$ is closed under isomorphism and satisfies the axioms:
\begin{description}
\item[E$_0$] for every object $A \in \mcA,$ the morphism $1_A$ is an inflation;
\item[E$_0^{\op}$] for every object $A \in \mcA,$ the morphism $1_A$ is a deflation;
\item[E$_1$] inflations are closed under composition;
\item[E$_1^{\op}$] deflations are closed under composition;
\item[E$_2$] the pushout of an inflation along an arbitrary morphism exists and yields an inflation;
\item[E$_2^{\op}$] the pullback of a deflation along an arbitrary morphism exists and yields a deflation.  
\end{description}
\bigskip

The {\em arrow} category $\Arr (\mcA)$ of a category $\mcA$ is the category whose objects $a : A_0 \to A_1$ are the morphisms (arrows) of $\mcA,$ and a morphism $f : a \to b$ in $\Arr (\mcA)$ is given by a pair of morphisms $f = (f_0, f_1)$ of $\mcA$ for which the diagram
\DIAGV{80}
{A_0} \n {\Ear{f_0}} \n {B_0} \nn
{\saR{a}} \n {} \n {\saR{b}} \nn
{A_1} \n {\Ear{f_1}} \n {B_1}
\diag
commutes. We will adhere to the convention that in a $2$-dimensional diagram, arrows will be depicted vertically, as above, while morphisms are depicted horizontally. In a $3$-dimensional diagram, arrows will appear orthogonal to the page, while morphisms of arrows will appear to be inside the page. There is a full and faithful functor $\mcA \to \Arr (\mcA)$ given by $A \mapsto 1_A : A \to A.$ An arrow $a : A_0 \to A_1$ is isomorphic to an object $1_A$ of $\mcA$ if and only if it is an isomorphism. In that case, $a \isom 1_{A_0} \isom 1_{A_1}.$

If $(\mcA; \mcE)$ is an exact category, then it is readily verified that $(\Arr (\mcA), \Arr (\mcE))$ satisfies the axioms for an exact 
category~\cite[Cor 2.10]{B}, where a kernel-cokernel pair 
\DIAGV{80}
{\xi :} \n {a} \n {\Ear{f}} \n {b} \n {\Ear{g}} \n {c}
\diag
of $\Arr (\mcA)$ belongs to $\Arr (\mcE)$ provided that it is a morphism
\DIAGV{80}
{\Xi_0 :} \n {A_0} \n {\Ear{f_0}} \n {B_0} \n {\Ear{g_0}} \n {C_0} \nn
{} \n {\saR{a}} \n {} \n {\saR{b}} \n {} \n {\saR{c}} \nn
{\Xi_1 :} \n {A_1} \n {\Ear{f_1}} \n {B_1} \n {\Ear{g_1}} \n {C_1}
\diag
of conflations in $(\mcA; \mcE).$ The full and faithful embedding $(\mcA; \mcE) \subseteq (\Arr (\mcA); \Arr (\mcE))$ is exact, in the sense that if $\Xi : A \to B \to C$ is a conflation in $\mcE,$ then $1_{\Xi} : 1_A \to 1_B \to 1_C$ is one in $\Arr (\mcE).$ 

\section{The Mono-Epi Exact Structure of Arrows}

Let $(\mcA; \mcE)$ be an exact category. If a conflation $\xi : i \to a \to j$ in $\Arr (\mcE)$ is considered as a morphism of conflations in 
$(\mcA; \mcE),$ then it has a {\em pullback-pushout} factorization~\cite[Proposition 3.1]{B}
\DIAGV{80}
{\Xi_0:} \n {I_0} \n {\ear} \n {A_0} \n {\ear} \n {J_0} \nn
{} \n {\saR{i}} \n {} \n {\sar} \n {} \n {\seql} \nn
{\Xi' :} \n {I_1} \n {\ear} \n {A'} \n {\ear} \n {J_0} \nn
{} \n {\seql} \n {} \n {\sar} \n {} \n {\saR{j}} \nn
{\Xi_1:} \n {I_1} \n {\ear} \n {A_1} \n {\ear} \n {J_1,}
\diag
where $\Xi'$ is a conflation in $(\mcA; \mcE).$ Looking at this factorization, we see that $\xi$ is null-homotopic if and only if the conflation $\Xi'$ in $(\mcA; \mcE)$ is split. To motivate the next definition, let us recall that the category $\umcE$ whose objects are the conflations of 
$(\mcA; \mcE),$ and whose morphisms are the morphisms of $(\Arr (\mcA); \Arr (\mcE))$ modulo split exact conflations, is an abelian category~\cite{F}. If a conflation $\xi$ in $(\Arr (\mcA); \Arr (\mcE))$ is considered as a morphism in $\umcE,$ then the pullback-pushout factorization of $\xi$ is just the epi-mono factorization obtained from the abelian structure of $\umcE.$ 

\begin{definition} \label{mono epi def} A conflation $\xi : i \to a \to j$ in $\Arr (\mcE)$ is called {\em mono-epi (ME)} if there is a factorization
\DIAGV{80}
{\Xi_0:} \n {I_0} \n {\ear} \n {A_0} \n {\Ear{e_0}} \n {J_0} \nn
{} \n {\seql} \n {} \n {\Sar{a^1}} \n {} \n {\saR{j}} \nn
{\Xi:} \n {I_0} \n {\Ear{m}} \n {A} \n {\Ear{e}} \n {J_1} \nn
{} \n {\saR{i}} \n {} \n {\Sar{a^2}} \n {} \n {\seql} \nn
{\Xi_1:} \n {I_1} \n {\Ear{m_1}} \n {A_1} \n {\ear} \n {J_1,}
\diag
of $\xi,$ where the middle row is a conflation in $(\mcA; \mcE).$ Denote by $\ME \subseteq \Arr (\mcE)$ the collection of mono-epi conflations in 
$\Arr (\mcE).$ 
\end{definition}

An $\ME$-inflation is therefore a monomorphism $m : i \to a$ of arrows for which the arrow $a$ admits a factorization $a = a^2 a^1$ so that the morphism $m : I_0 \to A$ in the commutative diagram
\DIAGV{80}
{I_0} \n {\Ear{m_0}} \n {A_0} \nn
{\seql} \n {} \n {\saR{a^1}} \nn
{I_0} \n {\Ear{m}} \n {A} \nn
{\saR{i}} \n {\PO} \n {\saR{a^2}} \nn
{I_1} \n {\Ear{m_1}} \n {A_1}
\diag
is an inflation in $(\mcA; \mcE),$ and the bottom square is a pushout diagram.

\begin{theorem} \label{mono-epi}
The mono-epi substructure $(\Arr (\mcA); \ME) \subseteq (\Arr (\mcA); \Arr (\mcE))$ is exact. 
\end{theorem}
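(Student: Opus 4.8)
The plan is to verify that $\ME$, viewed as a class of kernel--cokernel pairs of the additive category $\Arr(\mcA)$, satisfies the axioms of an exact category; since $\ME\subseteq\Arr(\mcE)$ and $(\Arr(\mcA);\Arr(\mcE))$ is exact~\cite[Cor 2.10]{B}, this exhibits $(\Arr(\mcA);\ME)$ as an exact substructure. Closure of $\ME$ under isomorphism is immediate, since an isomorphism of conflations of arrows transports a factorization as in Definition~\ref{mono epi def} to an isomorphic one, and $\ME$ is visibly closed under finite direct sums. Axioms E$_0$ and E$_0^{\op}$ --- that $1_a$ is an $\ME$-inflation and an $\ME$-deflation --- reduce to the assertion that every split conflation of arrows lies in $\ME$: for a split conflation $i\to i\oplus j\to j$ one takes the middle row $\Xi$ of Definition~\ref{mono epi def} to be the split conflation $I_0\to I_0\oplus J_1\to J_1$, with $a^1=1_{I_0}\oplus j$ and $a^2=i\oplus 1_{J_1}$, and checks directly that the morphisms of conflations $\Xi_0\to\Xi$ and $\Xi\to\Xi_1$ of Definition~\ref{mono epi def} and the pushout square there are as required.

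The main constructions are for the pushout axiom E$_2$, together with its dual E$_2^{\op}$. Suppose $f\colon i\to a$ is an $\ME$-inflation, realised by a factorization as in Definition~\ref{mono epi def} with middle object $A$, arrow $a=a^2a^1$, and inflation $m\colon I_0\to A$ of cokernel $J_1$; let $g\colon i\to d$ be arbitrary in $\Arr(\mcA)$. By~\cite[Cor 2.10]{B} the pushout $a\sqcup_i d$ exists in $\Arr(\mcA)$, the induced morphism $d\to a\sqcup_i d$ is an $\Arr(\mcE)$-inflation with cokernel $j$, and it is built componentwise from the pushouts $A_0\sqcup_{I_0}D_0$ and $A_1\sqcup_{I_1}D_1$ in $\mcA$. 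To present $d\to a\sqcup_i d$ as an $\ME$-inflation, I would take the new middle object to be $\bar A:=A\sqcup_{I_0}D_0$, the pushout of the inflation $m$ along $I_0\xrightarrow{g_0}D_0$; then $D_0\to\bar A$ is an inflation with cokernel $J_1$, supplying the middle row $\Xi$ of the sought factorization, while the two maps factoring the arrow $a\sqcup_i d$ through $\bar A$ and the morphisms of conflations $\Xi_0\to\Xi$ (identity on kernels) and $\Xi\to\Xi_1$ (identity on cokernels) are forced by the universal properties of the pushouts involved. The one delicate point is that the new lower square is again a pushout; this follows by pasting pushout squares, using crucially that the lower square of the \emph{given} factorization is a pushout, so that $A_1\cong A\sqcup_{I_0}I_1$ and both candidate pushouts reduce to $A\sqcup_{I_0}D_1$. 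The axiom E$_2^{\op}$ is dual: for an $\ME$-deflation $a\to j$ and an arbitrary morphism $\theta\colon j'\to j$, one takes the new middle object to be the pullback $A\times_{J_1}J'_1$ of the deflation $A\to J_1$, and the delicate point becomes the interchange in $(\mcA;\mcE)$ of a pushout along an inflation with a pullback along a deflation, a standard property of exact categories.

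It remains to treat the composition axioms E$_1$ and E$_1^{\op}$ --- that $\ME$-inflations, resp.\ $\ME$-deflations, compose --- and this is the step I expect to require the most care and the one genuinely new idea. Given $\ME$-inflations $i\to a$ and $a\to b$, realised by conflations $i\to a\to j$ and $a\to b\to k$ in $\ME$ with middle objects $A$ and $C$, the composite $i\to b$ is an $\Arr(\mcE)$-inflation whose cokernel is an arrow $l$ fitting into a conflation $j\to l\to k$; the task is to fuse the two given factorizations over their common term $a$ into a single factorization witnessing $i\to b\to l\in\ME$. The natural candidate for its middle object is $C\sqcup_{A_0}A$, the pushout of the inflation $A_0\to C$ along the map $A_0\xrightarrow{a^1}A$, and the heart of the matter is to identify the kernel and cokernel of the resulting middle row as $I_0$ and $L_1$; this calls for a $3\times3$-type argument in $(\mcA;\mcE)$, after which the commuting squares and the pushout property of the lower square are checked as before. (Alternatively, E$_1$ and E$_1^{\op}$ may be omitted, being redundant given the remaining axioms; see~\cite{B} --- in which case the remaining obstacle is simply the volume of bookkeeping in E$_2$ and E$_2^{\op}$.) Throughout, the recurring technical burden is exactly this bookkeeping: each factorization produced must be exhibited together with its two morphisms of conflations, whose squares --- routine but not automatic --- must all be verified to commute, the fact that the middle rows are conflations of $\mcE$ itself following from the pushout and pullback axioms of $(\mcA;\mcE)$.
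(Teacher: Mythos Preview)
Your proposal is correct and follows essentially the same approach as the paper: the paper verifies E$_0$, E$_1$, and E$_2^{\op}$ (leaving the rest to duality), and its constructions coincide with yours---for E$_1$ the paper's object $B'$ is exactly your $C\sqcup_{A_0}A$, and for E$_2^{\op}$ the paper pulls back the middle conflation just as you (dually) push it out for E$_2$. Your observation about the pushout/pullback interchange in E$_2^{\op}$ is apt; the paper handles it implicitly by noting that the pulled-back diagram is a morphism of conflations with identity on one end, whence the required bicartesian square follows from~\cite[Prop.~2.12]{B}, and your remark that E$_1$ and E$_1^{\op}$ are redundant is a legitimate shortcut the paper does not take.
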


\begin{proof} 
Let us verify Axioms E$_0,$ E$_1,$ and E$_2^{\op}$ of an exact structure for $(\Arr (\mcA); \ME),$ the verification of the other axioms being dual.
Axiom E$_0$ that for every arrow $a \in \Arr (\mcA),$ the identity morphism $1_a : a \to a$ is an $ME$-inflation is easy to verify by the characterization above of an ME-inflation. To verify Axiom E$_1,$ which asserts that the composition of two ME-inflations is again such, consider such a composition $i \stackrel{m}{\to} a \stackrel{n}{\to} b$ as depicted by the diagram
\DIAGV{80}
{I_0} \n {\Ear{m_0}} \n {A_0} \n {\Ear{n_0}} \n {B_0} \nn
{\seql} \n {} \n {\seql} \n {} \n {\saR{b^1}}  \nn
{I_0} \n {\ear} \n {A_0} \n {\Ear{n'}} \n {B}  \nn
{\seql} \n {} \n {\saR{a^1}} \n {\PO} \n {}  \nn
{I_0} \n {\Ear{m'}} \n {A} \n {} \n {\saRv{b^2}{120}}  \nn
{\Sar{i}} \n {\PO} \n {\saR{a^2}} \n {} \n {}  \nn
{I_1} \n {\Ear{m_1}} \n {A_1} \n {\Ear{n_1}} \n {B_1,} 
\diag
all of whose horizontal maps are inflations in $(\mcA; \mcE).$ The pushout in the lower right rectangle may be factored by taking the pushout of $n'$ and $a^1$ to obtain the diagram
\DIAGV{80}
{I_0} \n {\Ear{m_0}} \n {A_0} \n {\Ear{n_0}} \n {B_0} \nn
{\seql} \n {} \n {\seql} \n {} \n {\saR{b^1}}  \nn
{I_0} \n {\ear} \n {A_0} \n {\Ear{n'}} \n {B}  \nn
{\seql} \n {} \n {\saR{a^1}} \n {\PO} \n {\saR{b_1^2}}  \nn
{I_0} \n {\Ear{m'}} \n {A} \n {\Ear{n''}} \n {B'}  \nn
{\Sar{i}} \n {\PO} \n {\saR{a^2}} \n {\PO} \n {\saR{b_2^2}}  \nn
{I_1} \n {\Ear{m_1}} \n {A_1} \n {\Ear{n_1}} \n {B_1,} 
\diag
which coarsens to
\DIAGV{80}
{I_0} \n {\Ear{m_0}} \n {A_0} \n {\Ear{n_0}} \n {B_0} \nn
{\seql} \n {} \n {} \n {} \n {\saR{b_1^2 b^1}}  \nn
{I_0} \n {\Ear{m'}} \n {A} \n {\Ear{n''}} \n {B'}  \nn
{\saR{i}} \n {\PO} \n {} \n {} \n {\saR{b_2^2}}  \nn
{I_1} \n {\Ear{m_1}} \n {A_1} \n {\Ear{n_1}} \n {B_1,} 
\diag
as required.

To verify Axiom E$_2^{\op}$ for an exact category, suppose that an ME-conflation $\xi : i \to b \to j$ is given and let $g : d \to j$ be an arbitrary morphism in $\Arr (\mcA).$ The pullback along $g$ with respect to the mono-epi factorization of $\xi$ is obtained by taking the pullbacks in $(\mcA; \mcE)$ along the vertical morphisms depicted in the diagram
\DIAGV{40}
{} \n {} \n {} \n {} \n {} \n {} \n {} \n {} \n {} \n {} \n {} \n {} \n {} \n {} \n {} \n {} \n {D_0} \nn 
{} \n {} \n {} \n {} \n {} \n {} \n {} \n {} \n {} \n {} \n {} \n {} \n {} \n {} \n {} \n {\Swar{d}} \n {} \nn 
{} \n {} \n {} \n {} \n {} \n {} \n {} \n {} \n {} \n {} \n {} \n {} \n {} \n {} \n {D_1} \n {} \n {} \nn 
{} \n {} \n {} \n {} \n {} \n {} \n {} \n {} \n {} \n {} \n {} \n {} \n {} \n {\sweql} \n {} \n {} \n {\saRv{g_0}{170}} \nn 
{} \n {} \n {} \n {} \n {} \n {} \n {} \n {} \n {} \n {} \n {} \n {} \n {D_1} \n {} \n {} \n {} \n {} \nn   
{} \n {} \n {} \n {} \n {} \n {} \n {} \n {} \n {} \n {} \n {} \n {} \n {} \n {} \n {\saRv{g_1}{170}} \n {} \n {} \nn 
{} \n {} \n {} \n {} \n {I_0} \n {} \n {} \n {\earv{150}} \n {} \n {} \n {B_0} \n {} \n {} \n {\earv{150}} \n {} \n {} \n {J_0} \nn 
{} \n {} \n {} \n {\sweql} \n {} \n {} \n {} \n {} \n {} \n {\swaR{b^1}} \n {} \n {} \n {\saRv{g_1}{170}} \n {} \n {} \n {\swaR{j}} \nn 
{} \n {} \n {I_0} \n {} \n {} \n {\earv{150}} \n {} \n {} \n {B} \n {} \n {} \n {\earv{150}} \n {} \n {} \n {J_1} \nn 
{} \n {\swaR{i}} \n {} \n {} \n {} \n {} \n {} \n {\swaR{b^2}} \n {} \n {} \n {} \n {} \n {} \n {\sweql} \nn 
{I_1} \n {} \n {} \n {\earv{150}} \n {} \n {} \n {B_1} \n {} \n {} \n {\earv{150}} \n {} \n {} \n {J_1.}  
\diag
When these pullbacks are taken, one obtains the commutative diagram
\DIAGV{50}
{} \n {} \n {} \n {} \n {I_0} \n {} \n {} \n {\earv{150}} \n {} \n {} \n {A_0} \n {} \n {} \n {\earv{150}} \n {} \n {} \n {D_0} \nn 
{} \n {} \n {} \n {\sweql} \n {} \n {} \n {} \n {} \n {} \n {\swaR{a^1}} \n {} \n {} \n {} \n {} \n {} \n {\Swar{d}} \n {} \nn 
{} \n {} \n {I_0} \n {} \n {} \n {\earv{150}} \n {} \n {} \n {A} \n {} \n {} \n {\earv{150}} \n {} \n {} \n {D_1} \nn 
{} \n {\swaR{i}} \n {} \n {} \n {\seqlv{170}} \n {} \n {} \n {\swaR{a^2}} \n {} \n {} \n {\saRv{f_0}{170}} \n {} \n {} \n {\sweql} \n {} \n {} \n {\saRv{g_0}{170}} \nn 
{I_1} \n {} \n {} \n {\earv{150}} \n {} \n {} \n {A_1} \n {} \n {} \n {\earv{150}} \n {} \n {} \n {D_1} \nn  
{} \n {} \n {\seqlv{170}} \n {} \n {} \n {} \n {} \n {} \n {\saRv{f'}{170}} \n {} \n {} \n {} \n {} \n {} \n {\saRv{g_1}{170}} \n {} \n {} \nn 
{} \n {} \n {} \n {} \n {I_0} \n {} \n {} \n {\earv{150}} \n {} \n {} \n {B_0} \n {} \n {} \n {\earv{150}} \n {} \n {} \n {J_0} \nn 
{\seqlv{170}} \n {} \n {} \n {\sweql} \n {} \n {} \n {\saRv{f_1}{170}} \n {} \n {} \n {\swaR{b^1}} \n {} \n {} \n {\saRv{g_1}{170}} \n {} \n {} \n {\swaR{j}} \nn 
{} \n {} \n {I_0} \n {} \n {} \n {\earv{150}} \n {} \n {} \n {B} \n {} \n {} \n {\earv{150}} \n {} \n {} \n {J_1} \nn 
{} \n {\swaR{i}} \n {} \n {} \n {} \n {} \n {} \n {\swaR{b^2}} \n {} \n {} \n {} \n {} \n {} \n {\sweql} \nn 
{I_1} \n {} \n {} \n {\earv{150}} \n {} \n {} \n {B_1} \n {} \n {} \n {\earv{150}} \n {} \n {} \n {J_1,}  
\diag
where the top level yields a mono-epi decomposition of the pullback of $\xi$ along $g.$
\end{proof}

We will use the notation $B = A \star C$ to indicate the existence of a conflation $A \to B \to C$ in an exact category $(\mcA; \mcE).$
If $i$ and $j$ are arrows in $\mcA,$ we say that an arrow $a$ in $\Arr (\mcA)$ is an extension of $j$ by $i,$ denoted $a = i \star j,$ if there exists an $\ME$-conflation $i \to a \to j.$ For example, if $A \to B \to C$ is a conflation in $\mcE,$ then the corresponding conflation 
$1_A \to 1_B \to 1_C$ in $\Arr (\mcE)$ is clearly an $\ME$-conflation, so that $1_B = 1_A \star 1_C$ holds in $(\Arr (\mcA); \ME).$ 

The following proposition is an application of Theorem~\ref{mono-epi}.

\begin{proposition} \label{associativity}
If $i,$ $j,$ and $k \in \Arr (\mcA),$ then $i \star (j \star k) = (i \star j) \star k.$
\end{proposition}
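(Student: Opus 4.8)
The plan is to derive the proposition from Theorem~\ref{mono-epi}. Since $(\Arr(\mcA); \ME)$ is an exact category, the statement is an instance of the general fact that the extension relation $\star$ is associative in an arbitrary exact category $(\mcB; \mcF)$: for objects $X, Y, Z$ of $\mcB$, an object $W$ admits an $\mcF$-conflation $X \to W \to V$ with $V = Y \star Z$ if and only if it admits an $\mcF$-conflation $U \to W \to Z$ with $U = X \star Y$. Unwinding the notation, the first condition says exactly $W = X \star (Y \star Z)$ and the second says $W = (X \star Y) \star Z$, so I would first isolate and prove this general claim and then apply it with $(\mcB; \mcF) = (\Arr(\mcA); \ME)$ and $(X, Y, Z) = (i, j, k)$. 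It is precisely here that Theorem~\ref{mono-epi} is used: because $\ME$ is a genuine exact structure, every kernel of a deflation, cokernel of an inflation, and pullback or pushout needed in the argument below is available inside $(\Arr(\mcA); \ME)$.

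For the forward implication, suppose $X \xrightarrow{m} W \xrightarrow{e} V$ and $Y \xrightarrow{n} V \xrightarrow{p} Z$ are conflations. By $\mathbf{E}_1^{\op}$ the composite $q := pe$ is a deflation; let $u : U \to W$ be its kernel, so $U \xrightarrow{u} W \xrightarrow{q} Z$ is a conflation. Since $em = 0$ we get $qm = 0$, whence $m$ factors as $m = u\mu$, and the cancellation law for inflations~\cite{B} makes $\mu : X \to U$ an inflation. Dually, $qu = 0$ forces $eu$ to factor as $eu = n\nu$ through the kernel $n$ of $p$, and $n\nu\mu = em = 0$ with $n$ monic gives $\nu\mu = 0$. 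It then remains to check that $X \xrightarrow{\mu} U \xrightarrow{\nu} Y$ is a conflation. Applying the Noether isomorphism for exact categories~\cite{B} to the composable inflations $X \xrightarrow{\mu} U \xrightarrow{u} W$ yields a conflation $\coker(\mu) \to \coker(m) \to \coker(u)$; identifying $\coker(m)$ with $V$ via $e = \coker(m)$ and $\coker(u)$ with $Z$ via $q = \coker(u)$, its deflation becomes $p$, because $q = pe$. Comparing this with the conflation $Y \xrightarrow{n} V \xrightarrow{p} Z$, which has the same deflation $p$, and using that a deflation determines its kernel up to a unique compatible isomorphism, one obtains an isomorphism $\coker(\mu) \cong Y$ under which the canonical deflation $U \to \coker(\mu)$ corresponds to $\nu$; hence $X \xrightarrow{\mu} U \xrightarrow{\nu} Y$ is indeed a conflation. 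Thus $U = X \star Y$ and $W = U \star Z$, i.e.\ $W = (X \star Y) \star Z$. The reverse implication is the same argument carried out in $\mcB^{\op}$ (composites of deflations, pushouts, the cancellation law for deflations, and the dual Noether lemma).

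The step I expect to be the main obstacle is the final identification $\coker(\mu) \cong Y$ ``over $V$'' in the forward implication: one must apply the Noether lemma correctly and then verify that the two conflations with middle term $V$ and right-hand term $Z$ share the same deflation $p$, so that their kernels may be matched canonically and compatibly with $\nu$. Everything else reduces to a routine application of the axioms $\mathbf{E}_1$, $\mathbf{E}_1^{\op}$, $\mathbf{E}_2$, $\mathbf{E}_2^{\op}$ together with the elementary cancellation properties of inflations and deflations. Alternatively, one could avoid explicit cokernels by phrasing the whole argument through the pullback--pushout factorization of morphisms of conflations and the fact that a morphism of conflations with identical left-hand (resp.\ right-hand) terms has bicartesian right-hand (resp.\ left-hand) square; the underlying computation is essentially unchanged.
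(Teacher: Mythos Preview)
Your approach is essentially the paper's: both deduce the result from the general associativity of $\star$ in an arbitrary exact category and then invoke Theorem~\ref{mono-epi} to apply this in $(\Arr(\mcA);\ME)$. The paper organises the argument as a $3\times 3$ diagram: given ME-conflations $i\to a\to c$ and $j\to c\to k$, the middle row $b\to a\to k$ is an ME-conflation by $\mathbf{E}_1^{\op}$, and the left column $i\to b\to j$ is one because it arises as the pullback of the middle column along the inflation $j\to c$. This is precisely the ``alternative'' you sketch at the end via bicartesian squares.

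One caution about your main line of argument: the step ``the cancellation law for inflations makes $\mu:X\to U$ an inflation'' is not valid in a general exact category. The statement that $u\mu$ an inflation with $u$ monic forces $\mu$ to be an inflation requires weak idempotent completeness (B\"uhler, Prop.~7.6); the obscure axiom (Prop.~2.16) only applies once $\mu$ is known to have a cokernel, which you have not established. In this particular situation the gap is harmless: $U=\ker(pe)$ is canonically identified with the pullback of $e$ along $n$ (both satisfy the same universal property), and under that identification the deflation to $Y$ coming from $\mathbf{E}_2^{\op}$ has kernel $X$, exhibiting $\mu$ directly as the inflation in a conflation $X\to U\to Y$. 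This is how the paper bypasses the issue, and it also renders your subsequent appeal to the Noether lemma unnecessary.
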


\begin{proof}
The statement of the proposition should be interpreted as saying that an arrow $a$ is of the form $i \star (j \star k)$ if and only if it is of the form $(i \star j) \star k.$ Consider the commutative diagram
\DIAGV{80}
{i} \n {\eeql} \n {i} \nn
{\sar} \n {} \n {\sar} \nn
{b} \n {\ear} \n {a} \n {\ear} \n {k} \nn
{\sar} \n {} \n {\sar} \n {} \n {\seql} \nn
{j} \n {\ear} \n {c} \n {\ear} \n {k,} 
\diag
in $\Arr (\mcA).$ If $a = i \star (j \star k),$ then there is a diagram of this form, where the bottom row is an ME-conflation, so that $c = j \star k,$ and the middle column is an ME-conflation, so that $a = i \star c.$ By Axiom E$_1^{\op}$ for an exact category, the middle row is also an ME-conflation. The left column is also an ME-conflation, because it is obtained by pullback of the middle column along the inflation in the bottom row. The proof of the converse uses the dual argument.
\end{proof}

\section{Extension Ideals}

If $\mcM$ and $\mcN$ are classes of morphisms in $\mcA,$ then $\mcM \star \mcN$ denotes the class of morphisms that arise as extensions $a \star b,$ where $a \in \mcM$ and $b \in \mcN.$ Moreover, if $\mcK$ is a third class of morphisms, then the notation $\mcM \star \mcN \star \mcK$ is unambiguous, by Proposition~\ref{associativity}. If $\mcI$ and $\mcJ$ are ideals, then the ideal $\mcI \diamond \mcJ := \langle \mcI \star \mcJ \rangle$ is the {\em extension ideal} of $\mcJ$ by $\mcI.$ Because $i = i \star 0$ and $j = 0 \star j,$ the extension ideal $\mcI \diamond \mcJ$ contains both of the ideals $\mcI$ and $\mcJ.$ The elements of this extension ideal are described as follows. 

\begin{lemma} \label{ext crit}
Let $\mcI$ and $\mcJ$ be ideals of $\mcA.$ An arrow $a : A_0 \to A_1$ in $\mcA$ belongs to $\mcI \diamond \mcJ$ if and only if it satisfies one (resp., both) of the following equivalent conditions:
\begin{enumerate}
\item $a$ is a composition of morphisms $a : A_0 \stackrel{a^1}{\to} A \stackrel{a^2}{\to} A_1$ that are part of a commutative diagram
\DIAGV{90}
{} \n {} \n {} \n {A_0} \n {} \n {} \nn
{} \n {} \n {} \n {\saR{a^1}} \n {\Sear{j}} \n {} \nn
{\Xi:} \n {I} \n {\Ear{m}} \n {A} \n {\Ear{e}} \n {J} \nn
{} \n {} \n {\Sear{i}} \n {\saR{a^2}} \n {} \n {} \nn
{} \n {} \n {} \n {A_1,} \n {} \n {}
\diag
where $i \in \mcI,$ $j \in \mcJ,$ and the middle row is a conflation $\Xi$ in $\mcE;$
\item there are morphisms $r$ and $s$ in $\mcA$ such that $a = r(i \star j)s,$ where $i \in \mcI$ and $j \in \mcJ.$
\end{enumerate}
\end{lemma}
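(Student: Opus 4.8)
The plan is to establish the cycle $(1)\Rightarrow(2)\Rightarrow\big(a\in\mcI\diamond\mcJ\big)\Rightarrow(1)$, which yields all the asserted equivalences at once. Throughout, write $\mcC$ for the class of arrows $a$ of $\mcA$ satisfying condition $(1)$.

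The implication $(2)\Rightarrow\big(a\in\mcI\diamond\mcJ\big)$ is immediate: $i\star j\in\mcI\star\mcJ\subseteq\mcI\diamond\mcJ$, and $\mcI\diamond\mcJ$ is an ideal, so it absorbs the outer composition by $r$ and $s$. For $\big(a\in\mcI\diamond\mcJ\big)\Rightarrow(1)$, I would first check that $\mcC$ is an ideal. Closure under composition with arbitrary morphisms is free: given data $a=a^2a^1$, $\Xi\colon I\xrightarrow{m}A\xrightarrow{e}J$, $j=ea^1\in\mcJ$, $i=a^2m\in\mcI$ witnessing $a\in\mcC$, and morphisms $r,s$, the composite $ras=(ra^2)(a^1s)$ is witnessed by the same conflation $\Xi$, with $e(a^1s)=js\in\mcJ$ and $(ra^2)m=ri\in\mcI$ since $\mcI,\mcJ$ are ideals. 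Closure under addition uses a direct sum of conflations: if $a,a'\colon A_0\to A_1$ are witnessed by $\Xi$ and $\Xi'$, then $a+a'$ is witnessed by $\Xi\oplus\Xi'$ together with the connecting morphisms $\binom{a^1}{a'^1}$ and $(a^2\ \ a'^2)$, the relevant composites being $\binom{j}{j'}=\iota_Jj+\iota_{J'}j'\in\mcJ$ and $(i\ \ i')=i\pi_I+i'\pi_{I'}\in\mcI$. Next, $\mcI\star\mcJ\subseteq\mcC$: if $a=i_0\star j_0$ for arrows $i_0\in\mcI$, $j_0\in\mcJ$, then unwinding Definition~\ref{mono epi def} produces a conflation $\Xi\colon I_0\xrightarrow{m}A\xrightarrow{e}J_1$ and a factorization $a=a^2a^1$ with $ea^1=j_0e_0\in\mcJ$ and $a^2m=m_1i_0\in\mcI$, which is exactly condition $(1)$. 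Since $\mcC$ is an ideal containing $\mcI\star\mcJ$, it contains $\mcI\diamond\mcJ=\langle\mcI\star\mcJ\rangle$, establishing the implication.

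The step with genuine content is $(1)\Rightarrow(2)$. Starting from the data $a=a^2a^1$, $\Xi\colon I\xrightarrow{m}A\xrightarrow{e}J$, $j=ea^1\in\mcJ$, $i=a^2m\in\mcI$, I would form the arrow
$$b:=\begin{pmatrix}a&i\\ j&0\end{pmatrix}\colon A_0\oplus I\longrightarrow A_1\oplus J$$
and argue that $b=i\star j$, where $i$ is viewed as the arrow $I\to A_1$ and $j$ as the arrow $A_0\to J$, both lying in the appropriate ideal. The $\ME$-conflation $i\to b\to j$ is built with middle conflation the given $\Xi$, with split conflations $I\to A_0\oplus I\to A_0$ and $A_1\to A_1\oplus J\to J$ as its source and target rows, and with connecting morphisms $(a^1\ \ m)\colon A_0\oplus I\to A$ and $\binom{a^2}{e}\colon A\to A_1\oplus J$; a direct matrix computation using $em=0$ shows the composite of the latter two equals $b$, and the squares of the $\ME$-diagram commute because $\binom{a^2}{e}m=\binom{i}{0}$ and $e(a^1\ \ m)=(j\ \ 0)$. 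Finally $a=(1\ \ 0)\,b\,\binom{1}{0}$ exhibits $a$ in the form $r(i\star j)s$ demanded by $(2)$. I expect this last construction — selecting the split conflations correctly and confirming that the $\ME$-diagram commutes — to be the main (if still elementary) obstacle; everything else is routine bookkeeping with ideals and conflations.
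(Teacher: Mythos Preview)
Your proof is correct and follows the same overall architecture as the paper: the cycle $(1)\Rightarrow(2)\Rightarrow(a\in\mcI\diamond\mcJ)\Rightarrow(1)$, with the verification that the class $\mcC$ is an ideal containing $\mcI\star\mcJ$ carried out exactly as in the paper (closure under composition is trivial, closure under sums via the direct sum of conflations).

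The one place your argument genuinely differs is the step $(1)\Rightarrow(2)$. The paper takes the pullback of $\Xi$ along $j$ and the pushout along $i$ to produce conflations $I\to A'_0\to A_0$ and $A_1\to A'_1\to J$; the middle composite $i'j'$ is then an extension $i\star j$, and the factorizations $j=ea^1$, $i=a^2m$ yield a section $s\colon A_0\to A'_0$ and retraction $r\colon A'_1\to A_1$ with $a=r(i'j')s$. You instead write down the split conflations $I\to A_0\oplus I\to A_0$ and $A_1\to A_1\oplus J\to J$ directly and observe that the matrix $b=\left(\begin{smallmatrix}a&i\\ j&0\end{smallmatrix}\right)$ sits in an $\ME$-conflation with middle row $\Xi$. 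These are really the same construction seen from two sides: the section and retraction the paper finds force its pullback and pushout conflations to split, so $A'_0\cong A_0\oplus I$ and $A'_1\cong A_1\oplus J$, and under these identifications $i'j'$ becomes your $b$. Your version has the advantage of being explicit and avoiding any appeal to universal properties; the paper's version makes it transparent why such $r$ and $s$ exist without a matrix computation.
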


\begin{proof}
Let us prove that the morphisms $a$ that satisfy Condition ($1$) form an ideal that contains every extension $i \star j.$ If $a = i \star j$ is an extension of morphisms, then there is an ME-conflation $\xi : i \to a \to j.$ By Definition~\ref{mono epi def}, the arrow $a$ may be factored as 
$a = a^2 a^1$ with $je_0 \in \mcJ$ and $m_1i \in \mcI$ as required. It is easy to see that the morphisms satisying Condition ($1$) are closed under left and right multiplication. Finally, let us prove that if two parallel arrows $a_1,$ $a_2 : A_0 \to A_1$ possess a factorization satisfying Condition ($1$), then so does $a_1 + a_2 : A_0 \to A_1.$ We can factor $a_n,$ $n = 1,2$ as $a_n = a^2_n a^1_n,$ and there are commutative diagrams 
\DIAGV{90}
{} \n {} \n {} \n {A_0} \n {} \n {} \nn
{} \n {} \n {} \n {\saR{a^1_n}} \n {\Sear{j_n}} \n {} \nn
{\Xi_n:} \n {I^n} \n {\Ear{m_n}} \n {A^n} \n {\Ear{e_n}} \n {J^n} \nn
{} \n {} \n {\Sear{i_n}} \n {\saR{a^2_n}} \n {} \n {} \nn
{} \n {} \n {} \n {A_1,} \n {} \n {}
\diag
for $n = 1, 2.$ The $i_n$ belong to $\mcI,$ the $j_n$ to $\mcJ,$ and the $\Xi_n$ are conflations for $n = 1,2.$ By Proposition 2.9 of~\cite{B}, a direct sum of conflations is itself a conflation, so that
\DIAGV{150}
{} \n {} \n {} \n {A_0} \n {} \n {} \nn
{} \n {} \n {} \n {\Sar{\left( \begin{array}{c} a^1_1 \\ a^1_2 \end{array} \right)}} \n {\Sear{\left( \begin{array}{c} j_1 \\ j_2 \end{array} \right)}} \n {} \nn
{\Xi_1 \oplus \Xi_2:} \n {I^1 \oplus I^2} \n {\Ear{\left( \begin{array}{cc} m_1 & 0 \\ 0 & m_2 \end{array} \right)}} \n {A^1 \oplus A^2} \n {\Ear{\left( \begin{array}{cc} e_1 & 0 \\ 0 & e_2 \end{array} \right)}} \n {J^1 \oplus J^2} \nn
{} \n {} \n {\seaR{(i_1,i_2)}} \n {\saR{(a^2_1, a^2_2)}} \n {} \n {} \nn
{} \n {} \n {} \n {A_1} \n {} \n {}
\diag
yields a decomposition of $a_1 + a_2$ with $(i_1, i_2) \in \mcI$ and ${\dis \left( \begin{array}{c} j_1 \\ j_2 \end{array} \right) \in \mcJ}.$

\noindent ($1$) $\Rightarrow$ ($2$). Suppose now that the factorization $a : A_0 \stackrel{a^1}{\to} A \stackrel{a^2}{\to} A_1$ satisfies Condition ($1$). Take the pullback of $\Xi$ along $j$ and the pushout along $i$ to obtain 
\DIAG
{\Ext (j, I)(\Xi):} \n {I} \n {\ear} \n {A'_0} \n {\Ear{e_0}} \n {A_0} \nn
{} \n {\seql} \n {} \n {\saR{j'}} \n {\swaR{a^1}} \n {\saR{j}} \nn
{\Xi :} \n {I} \n {\Ear{m}} \n {A} \n {\Ear{e}} \n {J} \nn
{} \n {\saR{i}} \n {\swaR{a^2}} \n {\saR{i'}} \n {} \n {\seql} \nn
{\Ext (J, i)(\Xi):} \n {A_1} \n {\Ear{m_1}} \n {A'_1} \n {\ear} \n {J.}
\diag
Then $i'j' = i \star j.$ Because $j = e a^1$ and the top right commutative square is a pullback diagram, there is a section 
$s : A_0 \to A'_0$ of $e_0,$ $e_0s = 1_{A_0},$ such that $j's = a^1.$ Similarly, there is a retraction $r : A'_1 \to A_1$ of $m_1$ such that 
$ri' = a^2.$ Thus $a = a^2 a^1 = ri'j's =r(i \star j)s.$ \smallskip

\noindent Obviously, every morphism that satisfies Condition ($2$) belongs to $\mcI \diamond \mcJ.$
\end{proof}

In order to prove that the operation that associates to two ideals $\mcI$ and $\mcJ$ the extension ideal $\mcI \diamond \mcJ$ is associative, we will make use of the following observation.

\begin{lemma} \label{star composition}
If $i$ and $j$ are composable morphisms, and $k$ an arbitrary morphism, then for every extension $a = (ij) \star k,$ there is a morphism $j'$ such that  $a = (i \star k)j'.$
\end{lemma}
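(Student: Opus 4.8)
The plan is to unwind the $\ME$-conflation realizing $a=(ij)\star k$ using Definition~\ref{mono epi def}, and then to split the inflation $ij=i\circ j$ through its intermediate object. Write $j\colon P\to Q$ and $i\colon Q\to R$, so that $ij\colon P\to R$, and $k\colon K_0\to K_1$. By Definition~\ref{mono epi def}, the equality $a=(ij)\star k$ furnishes a factorization $a=a^2a^1$ of $a\colon A_0\to A_1$ through an object $A$, a conflation $\Xi\colon P\xrightarrow{m}A\xrightarrow{e}K_1$ in $\mcE$, conflations $\Xi_0\colon P\xrightarrow{f_0}A_0\xrightarrow{g_0}K_0$ and $\Xi_1\colon R\xrightarrow{f_1}A_1\xrightarrow{g_1}K_1$, and compatibilities $m=a^1f_0$, $e=g_1a^2$ and $ea^1=kg_0$, such that the pushout of $m$ along $ij$ is realized by $a^2$ and $f_1$.

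First I would refactor this pushout through $Q$. Pushing $\Xi$ out along $j$ yields a conflation $\Xi_B\colon Q\xrightarrow{n}B\xrightarrow{e_B}K_1$ together with a map $p\colon A\to B$ satisfying $pm=nj$ and $e_Bp=e$. Since $ij=i\circ j$, the outer pushout of $m$ along $ij$ decomposes as the $j$-pushout square stacked above a square with left edge $i$; the pasting law for pushouts then produces a map $h\colon B\to A_1$ with $hp=a^2$ and $hn=f_1i$ such that this lower square realizes the pushout of $n$ along $i$. A short chase with the uniqueness clause of the pushout defining $B$ — the maps $g_1h$ and $e_B$ both become zero after $n$ and agree (with $e$) after $p$ — then gives the identity $g_1h=e_B$.

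Next I would build an extension $i\star k$ through which $a$ factors. Pulling $\Xi_B$ back along $k$ produces a conflation $\Xi_0^B\colon Q\xrightarrow{n_0}C_0\xrightarrow{g_0^B}K_0$, with $C_0=B\times_{K_1}K_0$ and projection $\bar p\colon C_0\to B$, and I take $c$ to be the arrow $C_0\xrightarrow{\bar p}B\xrightarrow{h}A_1$. Then $(n_0,f_1)\colon i\to c$ and $(g_0^B,g_1)\colon c\to k$ are morphisms of arrows — the identity $g_1h=e_B$ is precisely what makes the second commute — which fit into a conflation in $\Arr(\mcE)$ with rows $\Xi_0^B$ and $\Xi_1$; this conflation is $\ME$, as witnessed by the middle conflation $\Xi_B$, the factorization $c=h\circ\bar p$, the inflation $\bar p\circ n_0=n$, and the pushout square found above. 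Hence $c=i\star k$. Finally, $e_B(pa^1)=ea^1=kg_0$ exhibits $(pa^1,g_0)$ as a cone over the pullback $C_0$, so it factors through a map $j'\colon A_0\to C_0$ with $\bar p\circ j'=pa^1$; then $(i\star k)j'=cj'=hpa^1=a^2a^1=a$, as desired.

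The step I expect to be the real obstacle is the verification, in the third paragraph, that the arrow $c$ assembled from the pushout--pullback data is genuinely of the form $i\star k$, i.e., that $i\to c\to k$ satisfies every clause of Definition~\ref{mono epi def}. This amounts to patching together several commuting squares, confirming that the outer rows $\Xi_0^B$ and $\Xi_1$ are conflations (using the stability of inflations under pushout and of deflations under pullback, Axioms E$_2$ and E$_2^{\op}$), and establishing the single non-formal identity $g_1h=e_B$ from the universal property of the pushout defining $B$; granting these, the factorization $a=(i\star k)j'$ falls out immediately.
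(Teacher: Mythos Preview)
Your proof is correct and follows essentially the same strategy as the paper's: factor the pushout along $ij$ as the pushout along $j$ followed by the pushout along $i$, then interchange with the pullback along $k$ to exhibit the desired $i\star k$ and the factoring morphism $j'$. The paper packages the second step as an appeal to the ``pullback--pushout factorization'' of the composite $\Xi_0\to\Xi'$, whereas you build the intermediate conflation $\Xi_0^B$ directly as a pullback and verify the identity $g_1h=e_B$ and the clauses of Definition~\ref{mono epi def} by hand; the content is the same.
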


\begin{proof}
Consider a mono-epi factorization of an ME-conflation $\xi : ij \to a \to k$ 
\DIAGV{80}
{\Xi_0:} \n {J_0} \n {\ear} \n {A_0} \n {\ear} \n {K_0} \nn
{} \n {\seql} \n {} \n {\saR{a^1}} \n {} \n {\saR{k}} \nn
{\Xi:} \n {J_0} \n {\ear} \n {A} \n {\ear} \n {K_1} \nn
{} \n {\saR{j}} \n {} \n {\saR{a^2}} \n {} \n {\seql} \nn
{\Xi':} \n {J_1} \n {\ear} \n {A'} \n {\ear} \n {K_1} \nn
{} \n {\saR{i}} \n {} \n {\saR{a^3}} \n {} \n {\seql} \nn
{\Xi_1:} \n {I_1} \n {\ear} \n {A_1} \n {\ear} \n {K_1,}
\diag
where $a = a^3 a^2 a^1$ and the pushout of $\Xi$ along $ij$ has been factored as the composition of the pushout along $j$ followed by the pushout along $i.$ Now compose the top two morphisms of conflations and replace the composition with its pullback-pushout factorization to obtain
\DIAGV{80}
{\Xi_0:} \n {J_0} \n {\ear} \n {A_0} \n {\ear} \n {K_0} \nn
{} \n {\saR{j}} \n {} \n {\saR{j'}} \n {} \n {\seql} \nn
{} \n {J_1} \n {\ear} \n {A} \n {\ear} \n {K_0} \nn
{} \n {\seql} \n {} \n {\saR{k'}} \n {} \n {\saR{k}} \nn
{\Xi':} \n {J_1} \n {\ear} \n {A'} \n {\ear} \n {K_1} \nn
{} \n {\saR{i}} \n {} \n {\saR{a^3}} \n {} \n {\seql} \nn
{\Xi_1:} \n {I_1} \n {\ear} \n {A_1} \n {\ear} \n {K_1.}
\diag
Then $a = a^3 k' j' = (i \star k)j',$ as required.
\end{proof}

\begin{proposition} \label{ideal associativity}
If $\mcI,$ $\mcJ$ and $\mcK$ are ideals of $(\mcA; \mcE),$ then 
$(\mcI \diamond \mcJ) \diamond \mcK = \langle \mcI \star \mcJ \star \mcK \rangle = \mcI \diamond (\mcJ \diamond \mcK).$
\end{proposition}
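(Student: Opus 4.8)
The plan is to prove the two inclusions $\langle \mcI \star \mcJ \star \mcK \rangle \subseteq (\mcI \diamond \mcJ)\diamond\mcK$ and $(\mcI \diamond \mcJ)\diamond\mcK \subseteq \langle \mcI \star \mcJ \star \mcK \rangle$, and then deduce the assertion for $\mcI\diamond(\mcJ\diamond\mcK)$ symmetrically, or by passing to the opposite category. The first inclusion is immediate: by Proposition~\ref{associativity} a generator $i\star j\star k$ of the left-hand side equals $(i\star j)\star k$, and since $i\star j\in\mcI\star\mcJ\subseteq\mcI\diamond\mcJ$, this morphism belongs to $(\mcI\diamond\mcJ)\star\mcK$, hence to the ideal $(\mcI\diamond\mcJ)\diamond\mcK=\langle(\mcI\diamond\mcJ)\star\mcK\rangle$; since an ideal contains the ideal generated by any of its subsets, $\langle\mcI\star\mcJ\star\mcK\rangle\subseteq(\mcI\diamond\mcJ)\diamond\mcK$. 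Writing the same generator as $i\star(j\star k)$ and arguing the same way gives $\langle\mcI\star\mcJ\star\mcK\rangle\subseteq\mcI\diamond(\mcJ\diamond\mcK)$.

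For the reverse inclusion $(\mcI\diamond\mcJ)\diamond\mcK\subseteq\langle\mcI\star\mcJ\star\mcK\rangle$, I would use that $(\mcI\diamond\mcJ)\diamond\mcK$ is, by definition, the ideal generated by the extensions $b\star k$ with $b\in\mcI\diamond\mcJ$ and $k\in\mcK$; since $\langle\mcI\star\mcJ\star\mcK\rangle$ is an ideal, it is enough to show each such $b\star k$ lies in it. By Lemma~\ref{ext crit}(2), $b=r(i\star j)s$ for some $i\in\mcI$, $j\in\mcJ$ and morphisms $r,s$ of $\mcA$, so $b\star k=\big(r(i\star j)s\big)\star k$. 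Applying Lemma~\ref{star composition} to the factorization $r(i\star j)s=\big(r(i\star j)\big)\circ s$ produces a morphism $s'$ with $b\star k=\big((r(i\star j))\star k\big)s'$; and the codomain-side companion of Lemma~\ref{star composition} — whose proof merely factors the pushout of the middle conflation along $r(i\star j)=r\circ(i\star j)$ through the pushout along $i\star j$, yielding a canonical map $r'$ — gives $(r(i\star j))\star k=r'\big((i\star j)\star k\big)$. Thus $b\star k=r'\big((i\star j)\star k\big)s'$, and since $(i\star j)\star k=i\star j\star k\in\mcI\star\mcJ\star\mcK$ by Proposition~\ref{associativity}, the morphism $b\star k$ is a composite of the form $r'(\,\cdot\,)s'$ with the middle term in $\mcI\star\mcJ\star\mcK$, hence lies in $\langle\mcI\star\mcJ\star\mcK\rangle$.

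Finally, $\mcI\diamond(\mcJ\diamond\mcK)\subseteq\langle\mcI\star\mcJ\star\mcK\rangle$ follows by duality. Passing to the opposite exact category $(\mcA^{\op};\mcE^{\op})$, the mono-epi exact structure on $\Arr(\mcA)$ corresponds to the mono-epi exact structure of $\Arr(\mcA^{\op})$ (the two halves of each axiom interchanged, as in the proof of Theorem~\ref{mono-epi}), so that $(a\star b)^{\op}=b^{\op}\star a^{\op}$ and therefore $(\mcX\diamond\mcY)^{\op}=\mcY^{\op}\diamond\mcX^{\op}$; the desired inclusion becomes $(\mcK^{\op}\diamond\mcJ^{\op})\diamond\mcI^{\op}\subseteq\langle\mcK^{\op}\star\mcJ^{\op}\star\mcI^{\op}\rangle$, an instance of the inclusion already proved. (Alternatively one repeats the middle paragraph with $c\in\mcJ\diamond\mcK$ in place of $b$, peeling the two extraneous morphisms off the \emph{second} $\star$-argument of $i\star c$ by the appropriate companions of Lemma~\ref{star composition}.) Combining the four inclusions yields the proposition.

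I expect the middle paragraph to be the crux. The subtlety is that a morphism of $\mcI\diamond\mcJ$ is in general not literally an extension $i\star j$ — only a pre- and post-composition of one (Lemma~\ref{ext crit}) — and "$i\star j$" itself denotes merely some arrow fitting a mono-epi conflation, so the argument must peel the two extraneous morphisms off of $b\star k$ one at a time: Lemma~\ref{star composition} as stated disposes of the domain-side factor $s$, while a codomain-side companion (obtained by the identical pushout-factorization argument, or dually) disposes of $r$. Once the clutter is stripped, Proposition~\ref{associativity} re-associates in a single step; it is worth recording the codomain-side companion as a lemma in its own right, or folding it into the statement of Lemma~\ref{star composition}.
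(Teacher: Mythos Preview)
Your proof is correct and follows essentially the same approach as the paper's: write a general element of $\mcI\diamond\mcJ$ as $r(i\star j)s$ via Lemma~\ref{ext crit}(2), then peel off $s$ and $r$ from $(r(i\star j)s)\star k$ to exhibit the result as a composite through $i\star j\star k$, with the easy inclusion coming from Proposition~\ref{associativity}. The paper compresses the two peelings into a single appeal to Lemma~\ref{star composition}, writing $a\star k = r(i\star j\star k)s'$ directly; you are right to make explicit that a codomain-side companion of that lemma is needed for the $r$ factor (it follows by the identical pushout-factorization argument, as you note). Your treatment of the second equality via the opposite exact category is a clean alternative to the paper's ``the proof of the other is similar.''
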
 

\begin{proof}
We only prove the first equality; the proof of the other is similar. By Lemma~\ref{ext crit}, every element of 
$\mcI \diamond \mcJ$ is of the form $a = r(i \star j)s,$ with $i \in \mcI$ and $j \in \mcJ.$ By Lemma~\ref{star composition}, if 
$k \in \mcK,$ then $a \star k = r(i \star j \star k)s',$ for some $s'.$ Thus 
$(\mcI \diamond \mcJ) \diamond \mcK \subseteq \langle \mcI \star \mcJ \star \mcK \rangle.$ 
The converse inclusion follows from Proposition~\ref{associativity}.
\end{proof}

If $\mcX$ and $\mcY$ are subcategories of $\mcA,$ then $\mcX \star \mcY$ denotes the subcategory of objects $Z$ that arise as the middle term of a conflation $\Xi : X \to Z \to Y$ in $(\mcA; \mcE).$

\begin{theorem} \label{object ext}
If $\mcI$ and $\mcJ$ are object ideals, then so is $\mcI \diamond \mcJ = \langle \Ob (\mcI) \star \Ob (\mcJ) \rangle.$  
In that case, $\Ob (\mcI \diamond \mcJ) = \add [\Ob (\mcI) \star \Ob (\mcJ)].$
\end{theorem}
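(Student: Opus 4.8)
The plan is to prove the equality $\mcI\diamond\mcJ=\langle\,\Ob(\mcI)\star\Ob(\mcJ)\,\rangle$ directly and then read off the last assertion from Proposition~\ref{add closure}. Write $\mcX=\Ob(\mcI)$ and $\mcY=\Ob(\mcJ)$. By Proposition~\ref{ideal/additive} these are additive subcategories of $\mcA$ with $\mcI=\langle\mcX\rangle$ and $\mcJ=\langle\mcY\rangle$, and by the convention preceding the theorem $\mcX\star\mcY$ is itself a subcategory of $\mcA$, so $\langle\mcX\star\mcY\rangle$ is an object ideal. Granting $\mcI\diamond\mcJ=\langle\mcX\star\mcY\rangle$, Proposition~\ref{add closure} then yields $\Ob(\mcI\diamond\mcJ)=\Ob(\langle\mcX\star\mcY\rangle)=\add(\mcX\star\mcY)=\add[\Ob(\mcI)\star\Ob(\mcJ)]$, as claimed.

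The inclusion $\langle\mcX\star\mcY\rangle\subseteq\mcI\diamond\mcJ$ is the easy half. Recall $\langle\mcX\star\mcY\rangle=\langle 1_Z\;|\;Z\in\mcX\star\mcY\rangle$, so I would argue that each such $1_Z$ already lies in $\mcI\diamond\mcJ$: choosing a conflation $X\to Z\to Y$ in $\mcE$ with $X\in\mcX$ and $Y\in\mcY$, the induced conflation $1_X\to 1_Z\to 1_Y$ in $\Arr(\mcE)$ is an $\ME$-conflation (as noted just after Theorem~\ref{mono-epi}), whence $1_Z=1_X\star 1_Y$ with $1_X\in\mcI$ and $1_Y\in\mcJ$. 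Since $\mcI\diamond\mcJ$ is an ideal containing every $1_Z$ with $Z\in\mcX\star\mcY$, it contains the ideal $\langle\mcX\star\mcY\rangle$ they generate.

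The substantive half is $\mcI\diamond\mcJ\subseteq\langle\mcX\star\mcY\rangle$, and here I would start from Lemma~\ref{ext crit}(1): an arbitrary $a:A_0\to A_1$ in $\mcI\diamond\mcJ$ admits a factorization $a=a^2a^1$, with $a^1:A_0\to A$ and $a^2:A\to A_1$, fitting into a conflation $\Xi:I\xrightarrow{m}A\xrightarrow{e}J$ with $j:=ea^1\in\mcJ$ and $i:=a^2m\in\mcI$. Factor these witnesses through objects: $i=I\xrightarrow{u}X\xrightarrow{v}A_1$ with $X\in\mcX$, and $j=A_0\xrightarrow{p}Y\xrightarrow{q}J$ with $Y\in\mcY$. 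Now push $\Xi$ out along $u$ (Axiom~E$_2$) to obtain a conflation $\Xi':X\xrightarrow{m'}A'\xrightarrow{e'}J$ with comparison map $w:A\to A'$ satisfying $wm=m'u$ and $e'w=e$, and then pull $\Xi'$ back along $q$ (Axiom~E$_2^{\op}$) to obtain a conflation $\Xi'':X\to A''\xrightarrow{e''}Y$ with comparison map $w':A''\to A'$ satisfying $e'w'=qe''$; in particular $A''\in\mcX\star\mcY$. Finally I would invoke two universal properties. The pushout square in $\Xi\to\Xi'$, fed with $a^2:A\to A_1$ and $v:X\to A_1$ (which agree on $I$, since $a^2m=i=vu$), produces $b:A'\to A_1$ with $bw=a^2$. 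The pullback square in $\Xi''\to\Xi'$, fed with $wa^1:A_0\to A'$ and $p:A_0\to Y$ (which agree over $J$, since $e'(wa^1)=ea^1=j=qp$), produces $t:A_0\to A''$ with $w't=wa^1$. Then $a=a^2a^1=bwa^1=bw't$ factors through $A''\in\mcX\star\mcY$, so $a\in\langle\mcX\star\mcY\rangle$, completing the equality.

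The hardest part will be this last inclusion: arranging the pushout--pullback diagram so that the exact compatibility conditions needed to invoke the two universal properties are visibly in place ($a^2m=vu$ for the pushout lift of $b$, and $e'(wa^1)=qp$ for the pullback lift of $t$), and making sure the composite construction genuinely returns a conflation $X\to A''\to Y$, so that Axioms~E$_2$ and~E$_2^{\op}$ are really doing the work of keeping $A''$ inside $\mcX\star\mcY$. Once that diagram is set up, the identity $a=bw't$ is a one-line chase, and everything else is formal from Propositions~\ref{ideal/additive} and~\ref{add closure}.
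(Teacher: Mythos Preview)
Your proof is correct and follows essentially the same approach as the paper's. Both arguments establish the hard inclusion by factoring the witnessing morphisms $i\in\mcI$ and $j\in\mcJ$ through objects $X\in\Ob(\mcI)$ and $Y\in\Ob(\mcJ)$, then using a pushout along $I\to X$ followed by a pullback along $Y\to J$ to produce an object $A''=X\star Y$ through which $a$ factors. The only cosmetic difference is that the paper begins with a generator $a=i\star j$ and works through the mono-epi and pushout-pullback factorizations of the associated $\ME$-conflation, whereas you start from the equivalent characterization in Lemma~\ref{ext crit}(1) and invoke the pushout and pullback universal properties directly; the constructions coincide.
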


\begin{proof}
Suppose that $I \in \Ob (\mcI)$ and $J \in \Ob (\mcJ),$ and consider an object $X = I \star J$ that is an extension of $J$ by $I.$ Then 
$1_X = 1_I \star 1_J$ belongs to $\mcI \star \mcJ.$ Thus $\Ob (\mcI) \star \Ob (\mcJ) \subseteq \Ob (\mcI \diamond \mcJ)$ and, in particular,
$\langle \Ob (\mcI) \star \Ob (\mcJ) \rangle \subseteq \mcI \diamond \mcJ.$

To prove the converse, consider an extension $a = i \star j$ with $i \in \mcI$ and $j \in \mcJ.$ By hypothesis, the morphism $i$ factors as
$i : I_0 \stackrel{i'}{\rightarrow} I \to I_1$ where $I$ is an object of $\mcI,$ and $j$ factors as 
$j : J_0 \to J \stackrel{j'}{\rightarrow} J_1$ through an object $J$ of $\mcJ.$ The mono-epi factorization of the ME-conflation $\xi : i \to a \to j$
factors further as
\DIAGV{80}
{\Xi_0} \n {I_0} \n {\ear} \n {A_0} \n {\ear} \n {J_0} \nn
{} \n {\seql} \n {} \n {\sar} \n {} \n {\sar} \nn
{\Xi_J :} \n {I_0} \n {\ear} \n {J'} \n {\ear} \n {J} \nn
{} \n {\seql} \n {} \n {\sar} \n {} \n {\saR{j'}} \nn
{\Xi:} \n {I_0} \n {\ear} \n {A} \n {\ear} \n {J_1} \nn
{} \n {\Sar{i'}} \n {} \n {\sar} \n {} \n {\seql} \nn
{\Xi_I :} \n {I} \n {\ear} \n {I'} \n {\ear} \n {J_1} \nn
{} \n {\sar} \n {} \n {\sar} \n {} \n {\seql} \nn
{\Xi_1} \n {I_1} \n {\ear} \n {A_1} \n {\ear} \n {J_1,}
\diag
where every row is a conflation. The extension $i' \star j'$ appears as the middle arrow of the morphism of conflations from $\Xi_J$ to $\Xi_I,$
whose pushout-pullback factorization is given by
\DIAGV{80}
{\Xi_J :} \n {I_0} \n {\ear} \n {J'} \n {\ear} \n {J} \nn
{} \n {\Sar{i'}} \n {} \n {\sar} \n {} \n {\seql} \nn
{} \n {I} \n {\ear} \n {A'} \n {\ear} \n {J} \nn
{} \n {\seql} \n {} \n {\sar} \n {} \n {\saR{j'}} \nn
{\Xi_I :} \n {I} \n {\ear} \n {I'} \n {\ear} \n {J_1.}
\diag
This proves that $i' \star j',$ and therefore $i \star j,$ factors through $A' = I \star J,$ which belongs to \linebreak 
$\Ob (\mcI) \star \Ob (\mcJ).$ Thus $\mcI \star \mcJ \subseteq \langle \Ob (\mcI) \star \Ob (\mcJ) \rangle,$ and the equality is proved.
The last statement is immediate from the equality. It is intended to emphasize that, while the subcategory $\Ob (\mcI) \star \Ob (\mcJ)$
is closed under finite direct sums, it need not be closed under direct summands.
\end{proof}

\section{$\Ext$-Orthogonality}

Let $(B,A)$ be a pair of objects in an exact category $(\mcA; \mcE).$ Two conflations of the form \linebreak 
$\Xi_i : A \to C_i \to B,$ $i = 0,$ $1,$ are said to be equivalent if there exists an isomorphism $\xi : \Xi_0 \to \Xi_1$ of the form 
\DIAGV{80}
{\Xi_0 :} \n {A} \n {\ear} \n {C_0} \n {\ear} \n {B} \nn
{} \n {\saR{1_A}} \n {} \n {\sar} \n {} \n {\saR{1_B}} \nn
{\Xi_1 :} \n {A} \n {\ear} \n {C_1} \n {\ear} \n {B.}
\diag  
The equivalence classes form a class $\Ext (B,A) := \Ext_{\mcA}(B,A)$ that acquires the structure of an abelian group, with respect to the Baer sum operation. If $j : B' \to B$ is a morphism, then the pullback of $\Xi \in \Ext (B,A)$ yields an element $\Ext (j,A)(\Xi) \in \Ext (B', A).$ Similarly, 
if $i : A \to A',$ then the pushout yields the conflation $\Ext (B,i)(\Xi) \in \Ext (B,A').$ These properties define a bifunctor
$$\Ext : \mcA^{\op} \times \mcA \to \Ab.$$ 

\begin{definition}
A pair $(j,i)$ of morphisms in $\mcA$ is $\Ext${\em -orthogonal,} denoted $\Ext(j,i) = 0,$ if every ME-extension $\xi : i \to a \to j$ in $\Arr (\mcA)$ is {\em null-homotopic.} This means that there are morphisms $h : A_0 \to I_1$ and $g : J_0 \to A_1$ as in the diagram
\DIAGV{80}
{\Xi_0 :} \n {I_0} \n {\ear} \n {A_0} \n {\Ear{e_0}} \n {J_0} \nn
{} \n {\saR{i}} \n {\swaR{h}} \n {\saR{a}} \n {\swaR{g}} \n {\saR{j}} \nn
{\Xi_1 :} \n {I_1} \n {\Ear{m_1}} \n {A_1} \n {\ear} \n {J_1}
\diag 
satisfying $a = m_1h + ge_0.$
\end{definition}

\noindent {\bf Caution: $\Ext$-orthogonality for a pair of morphisms $(i,j)$ is properly weaker than the condition $\Ext (j,i) = 0$ in the exact category $(\Arr (\mcA); \Arr (\mcE)),$ and therefore in $(\Arr (\mcA); \ME),$ which we will not use in this paper (see~\cite[Remark 3.4]{EGO}).} Indeed, the next proposition shows that the definition of $\Ext$-orthogonality given above is equivalent to the definition of $\Ext$-orthogonality introduced in~\cite{FGHT}.

\begin{proposition}
If $i:B' \to B$ and $j:A \to A'$ are morphisms in $\mcA,$ then the pair $(j,i)$ of morphisms in $(\mcA; \mcE)$ is $\Ext$-orthogonal if and only if the induced morphism $$\Ext (j,i) : \Ext (B,A) \to \Ext (B',A')$$ of abelian groups is zero.
\end{proposition}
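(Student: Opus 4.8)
The plan is to reduce the equivalence to a standard criterion for when a morphism of conflations is null-homotopic, and then to use the mono-epi factorization to identify which extension that criterion transports. First I would set up a dictionary between ME-extensions and ordinary extensions. Given an ME-extension $\xi : i \to a \to j$ with top conflation $\Xi_0 : I_0 \xrightarrow{m_0} A_0 \xrightarrow{e_0} J_0$ and bottom conflation $\Xi_1 : I_1 \xrightarrow{m_1} A_1 \xrightarrow{e_1} J_1$, so that $i : I_0 \to I_1$ and $j : J_0 \to J_1$, the mono-epi factorization of Definition~\ref{mono epi def} produces a conflation $\Xi : I_0 \to A \to J_1$ in $\mcE$, a morphism $(1_{I_0}, a^1, j) : \Xi_0 \to \Xi$, and a morphism $(i, a^2, 1_{J_1}) : \Xi \to \Xi_1$, with $a = a^2 a^1$. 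The universal property of the pullback together with the short five lemma forces $\Xi_0$ to be the pullback of $\Xi$ along $j$, and dually $\Xi_1$ to be the pushout of $\Xi$ along $i$. Conversely, every $\Xi \in \Ext(J_1, I_0)$ underlies such an ME-extension: take $\Xi_0$ to be the pullback of $\Xi$ along $j$, $\Xi_1$ the pushout of $\Xi$ along $i$, $a^1$ and $a^2$ the canonical maps out of the pullback and into the pushout, and $a := a^2 a^1$. Thus ME-extensions $i \to a \to j$ are, up to the evident equivalence, classified by the group $\Ext(J_1, I_0)$.

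The heart of the matter is a null-homotopy criterion: a morphism of conflations $\psi = (\psi_0, \psi_1, \psi_2) : \Xi_0 \to \Xi_1$ is null-homotopic if and only if the conflation obtained by pushing $\Xi_0$ out along $\psi_0$ -- which, by the pullback-pushout factorization~\cite[Prop.~3.1]{B}, coincides with the conflation obtained by pulling $\Xi_1$ back along $\psi_2$ and lies in $\Ext(J_0, I_1)$ -- is split. For the forward direction, $\psi_1 = m_1 h + g e_0$ gives $\psi_0 = h m_0$ after precomposing with $m_0$ and cancelling the monomorphism $m_1$; then the $\psi_0$-pushout of $\Xi_0$ factors through the pushout of $\Xi_0$ along its own inflation $m_0$, which is split, and splitness survives the further pushout along $h$. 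For the converse, splitness of that conflation means it is zero in $\Ext(J_0, I_1)$, realized simultaneously as the $\psi_0$-pushout of $\Xi_0$ and the $\psi_2$-pullback of $\Xi_1$; applying $\Hom(-, I_1)$ to $\Xi_0$ and $\Hom(J_0, -)$ to $\Xi_1$ yields six-term exact sequences (valid in any exact category) that produce $h : A_0 \to I_1$ with $\psi_0 = h m_0$ and $g : J_0 \to A_1$ with $\psi_2 = e_1 g$. A short chase then gives $\rho := \psi_1 - m_1 h - g e_0$ with $\rho m_0 = 0$ and $e_1 \rho = 0$, whence $\rho = m_1 s e_0$ for some $s : J_0 \to I_1$, and $\psi_1 = m_1 h + (g + m_1 s) e_0$ is the required homotopy.

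Combining the two, and using that pushout along a kernel-side morphism commutes with pullback along a cokernel-side morphism, an ME-extension $\xi$ with underlying extension $\Xi$ is null-homotopic if and only if the result of pulling $\Xi$ back along $j$ and then pushing it out along $i$ vanishes in $\Ext(J_0, I_1)$. Hence $(j, i)$ is $\Ext$-orthogonal exactly when this holds for every $\Xi \in \Ext(J_1, I_0)$, that is, exactly when the composite ``pullback along $j$, then pushout along $i$'' -- which is the bifunctorially induced homomorphism $\Ext(j, i)$ of the statement -- is zero. I expect the converse half of the null-homotopy criterion to be the main obstacle: one must extract $h$ and $g$ from two separate six-term sequences and check that the residual $\rho$ is absorbed, all while staying with the genuinely mono-epi notion of $\Ext$-orthogonality rather than invoking the stronger vanishing $\Ext_{\Arr(\mcA)}(j, i) = 0$ that the Caution preceding the statement explicitly excludes.
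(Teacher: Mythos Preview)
Your proposal is correct and follows essentially the same route as the paper's proof: both use the mono-epi factorization to parameterize ME-extensions $\xi : i \to a \to j$ by extensions $\Xi$ in $\Ext(J_1,I_0)$, and then identify the null-homotopy of $\xi$ with the splitting of the pullback-pushout intermediate conflation, which is exactly $\Ext(j,i)(\Xi)$. The only difference is that the paper compresses your null-homotopy criterion into a citation of~\cite[Prop~3]{FGHT}, whereas you supply the argument directly via the two six-term exact sequences; your treatment of the residual term $\rho$ is the standard way to finish that argument and is fine.
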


\begin{proof}
Every ME-conflation $\xi : i \to c \to j$ has a mono-epi factorization of the form
\DIAG
{\Ext (j,A) (\Xi):} \n {A} \n {\ear} \n {C_0} \n {\ear} \n {B'} \nn
{} \n {\seql} \n {} \n {\sar} \n {} \n {\saR{j}} \nn
{\Xi :} \n {A} \n {\ear} \n {C} \n {\ear} \n {B} \nn
{} \n {\saR{i}} \n {} \n {\sar} \n {} \n {\seql} \nn
{\Ext (B,i) (\Xi):} \n {A'} \n {\ear} \n {C_1} \n {\ear} \n {B} 
\diag
for some $\Xi \in \Ext (B,A),$ and every $\Xi \in \Ext (B,A)$ gives rise in this manner to an ME-conflation $\xi : i \to c \to j.$ The pullback-pushout factorization factors through the conflation $\Ext (i,j)(\Xi)$ (see~\cite[Prop 3]{FGHT} for a more thorough explanation). But $\Ext (i,j)(\Xi)$ is split if and only if $\xi$ is null-homotopic. 
\end{proof}

If $\mcI$ is an ideal, then the ideal {\em right} $\Ext${\em -perpendicular} to $\mcI$ is defined to be
$$\mcI^{\perp} = \{ j \; | \; \Ext (i,j) = 0 \; \mbox{for all} \; i \in I \}.$$
If $\mcJ$ is an ideal, the {\em left} $\Ext${\em -perpendicular} ideal ${^{\perp}}\mcJ$ is defined dually.

\begin{theorem} \label{mult/ext}
If $\mcI$ and $\mcJ$ are ideals, then $(\mcI \mcJ)^{\perp} \supseteq \mcJ^{\perp} \diamond \mcI^{\perp}.$ 
\end{theorem}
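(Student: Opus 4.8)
The plan is to reduce the inclusion to a single computation with $\Ext$‑groups in the base category $(\mcA;\mcE)$, and then to exploit the mono‑epi factorization of an extension of arrows. First I would make two routine reductions. Since $\mcJ^{\perp}\diamond\mcI^{\perp}=\langle\mcJ^{\perp}\star\mcI^{\perp}\rangle$ is the smallest ideal containing the extensions $k_{1}\star k_{2}$ with $k_{1}\in\mcJ^{\perp}$ and $k_{2}\in\mcI^{\perp}$, and $(\mcI\mcJ)^{\perp}$ is an ideal, it suffices to show that every arrow $c$ admitting an $\ME$‑conflation $k_{1}\to c\to k_{2}$ with $k_{1}\in\mcJ^{\perp}$ and $k_{2}\in\mcI^{\perp}$ lies in $(\mcI\mcJ)^{\perp}$. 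Fix such a $c$ and such a conflation. As $\{f\mid\Ext(f,c)=0\}$ is an ideal and $\mcI\mcJ$ is generated by the composites $ij$ with $i\in\mcI$, $j\in\mcJ$ composable, it is enough to prove $\Ext(ij,c)=0$ for each such composite. Write $ij\colon X\to Z$ as $X\xrightarrow{\,j\,}Y\xrightarrow{\,i\,}Z$, and $k_{1}\colon C_{0}\to C_{1}$, $k_{2}\colon D_{0}\to D_{1}$, $c\colon L_{0}\to L_{1}$. By the Proposition preceding the theorem, $\Ext(ij,c)=0$ is equivalent to the vanishing of the homomorphism $\Theta\mapsto(ij)_{\ast}c^{\ast}\Theta=i_{\ast}j_{\ast}c^{\ast}\Theta$ from $\Ext(L_{1},X)$ to $\Ext(L_{0},Z)$.

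The key step uses Definition~\ref{mono epi def}: the chosen $\ME$‑conflation $k_{1}\to c\to k_{2}$ comes with a conflation $\Xi\colon C_{0}\xrightarrow{\,\mu\,}\widetilde L\xrightarrow{\,\nu\,}D_{1}$ in $(\mcA;\mcE)$, a factorization $c=\ell^{2}\ell^{1}$ with $\ell^{1}\colon L_{0}\to\widetilde L$ and $\ell^{2}\colon\widetilde L\to L_{1}$, and — from the two morphisms of conflations constituting the factorization — the identities $\ell^{2}\mu=\gamma k_{1}$ and $\nu\ell^{1}=k_{2}\beta$, where $\gamma\colon C_{1}\to L_{1}$ is the inflation of the conflation of codomains and $\beta\colon L_{0}\to D_{0}$ the deflation of the conflation of domains. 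Given $\Theta\in\Ext(L_{1},X)$, I would first observe
\[
\mu^{\ast}(\ell^{2})^{\ast}j_{\ast}\Theta=(\gamma k_{1})^{\ast}j_{\ast}\Theta=j_{\ast}k_{1}^{\ast}\gamma^{\ast}\Theta=0,
\]
the last equality because $\gamma^{\ast}\Theta\in\Ext(C_{1},X)$ while $j_{\ast}k_{1}^{\ast}=0$ on $\Ext(C_{1},X)$ by the Proposition applied to the pair $(j,k_{1})$ together with $k_{1}\in\mcJ^{\perp}$, $j\in\mcJ$. Hence, by exactness of the $\Ext$‑sequence $\Ext(D_{1},Y)\xrightarrow{\nu^{\ast}}\Ext(\widetilde L,Y)\xrightarrow{\mu^{\ast}}\Ext(C_{0},Y)$ attached to $\Xi$, one has $(\ell^{2})^{\ast}j_{\ast}\Theta=\nu^{\ast}\Psi$ for some $\Psi\in\Ext(D_{1},Y)$, and then
\[
i_{\ast}j_{\ast}c^{\ast}\Theta=i_{\ast}(\ell^{1})^{\ast}(\ell^{2})^{\ast}j_{\ast}\Theta=i_{\ast}(\ell^{1})^{\ast}\nu^{\ast}\Psi=i_{\ast}(k_{2}\beta)^{\ast}\Psi=\beta^{\ast}\bigl(i_{\ast}k_{2}^{\ast}\Psi\bigr)=0,
\]
since $\Psi\in\Ext(D_{1},Y)$ while $i_{\ast}k_{2}^{\ast}=0$ on $\Ext(D_{1},Y)$ by the Proposition applied to $(i,k_{2})$ together with $k_{2}\in\mcI^{\perp}$, $i\in\mcI$. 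This yields the desired vanishing, completing the proof.

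The one subtle point is the order in which the two orthogonality hypotheses are used: the d\'evissage must be arranged so that the class $\Psi$ lies in $\Ext(D_{1},Y)$ rather than merely in $\Ext(D_{1},Z)$, since $i_{\ast}k_{2}^{\ast}$ annihilates only the former. This forces one to push forward along $j$ before pairing against the outer factor $\ell^{2}$ — the factor tied to $k_{1}$, hence to $\mcJ^{\perp}$ — and to postpone the pushforward along $i$ until it meets the inner factor $\ell^{1}$, tied to $k_{2}$ and hence to $\mcI^{\perp}$. Apart from this, the argument is bookkeeping: one need only track which variable of the bifunctor $\Ext(-,-)$ each of $\mu^{\ast},\nu^{\ast},\gamma^{\ast},\beta^{\ast},(\ell^{1})^{\ast},(\ell^{2})^{\ast}$ and $i_{\ast},j_{\ast}$ acts on, so that commutations such as $j_{\ast}(\ell^{r})^{\ast}=(\ell^{r})^{\ast}j_{\ast}$ and $i_{\ast}\beta^{\ast}=\beta^{\ast}i_{\ast}$ are legitimate.
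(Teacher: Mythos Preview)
Your overall strategy is the same as the paper's, and the key idea—using the exact $\Ext$-sequence attached to the conflation $\Xi$ coming from the mono-epi factorization—is exactly right. However, there is a systematic variance error that makes the argument, as written, invalid.

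The condition $c\in(\mcI\mcJ)^{\perp}$ means $\Ext(ij,c)=0$, and in the paper's bifunctor convention (first variable contravariant, second covariant) this is the map
\[
c_{*}(ij)^{*}=\ell^{2}_{*}\ell^{1}_{*}\,j^{*}i^{*}\colon \Ext(Z,L_{0})\longrightarrow\Ext(X,L_{1}),
\]
not the map $(ij)_{*}c^{*}\colon\Ext(L_{1},X)\to\Ext(L_{0},Z)$ that you analyze. Correspondingly, the hypothesis $k_{1}\in\mcJ^{\perp}$ says that for $j\in\mcJ$ the map $\Ext(j,k_{1})=k_{1*}j^{*}\colon\Ext(Y,C_{0})\to\Ext(X,C_{1})$ vanishes; it says nothing about $j_{*}k_{1}^{*}\colon\Ext(C_{1},X)\to\Ext(C_{0},Y)$, which is what you invoke. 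The same swap occurs with $k_{2}\in\mcI^{\perp}$. Thus neither of the two vanishing steps in your displayed computations is justified by the hypotheses.

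The fix is simply to reverse all the stars: for $\Theta\in\Ext(Z,L_{0})$ compute $\nu_{*}\ell^{1}_{*}i^{*}\Theta=(k_{2}\beta)_{*}i^{*}\Theta=k_{2*}i^{*}\beta_{*}\Theta=0$ (using $\Ext(i,k_{2})=0$), so by exactness of the covariant sequence $\Ext(Y,C_{0})\xrightarrow{\mu_{*}}\Ext(Y,\tilde L)\xrightarrow{\nu_{*}}\Ext(Y,D_{1})$ one has $\ell^{1}_{*}i^{*}\Theta=\mu_{*}\Psi$, and then $\ell^{2}_{*}j^{*}\mu_{*}\Psi=(\gamma k_{1})_{*}j^{*}\Psi=\gamma_{*}k_{1*}j^{*}\Psi=0$ (using $\Ext(j,k_{1})=0$). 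This is precisely the paper's argument, phrased elementwise rather than with the large diagram.
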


\begin{proof}
By Proposition~\ref{ext crit}, a morphism $c : C_0 \to C_1$ in $\mcJ^{\perp} \diamond \mcI^{\perp}$ may be expressed as a composition 
$c = c^2 c^1$ given by the commutative diagram
\DIAGV{80}
{} \n {} \n {} \n {C_0} \n {} \n {} \nn
{} \n {} \n {} \n {\saR{c^1}} \n {\Sear{i^{\perp}}} \n {} \nn
{\Xi:} \n {J'} \n {\Ear{m}} \n {C} \n {\Ear{e}} \n {I'} \nn
{} \n {} \n {\seaR{j^{\perp}}} \n {\saR{c^2}} \n {} \n {} \nn
{} \n {} \n {} \n {C_1} \n {} \n {}
\diag
where $\Xi$ is a conflation, $i^{\perp} \in \mcI^{\perp}$ and $j^{\perp} \in \mcJ^{\perp}.$ Let $i: I_0 \to I_1$ be a morphism in $\mcI$ and 
$j : J_0 \to I_0$ a morphism in $\mcJ$ and apply the transformation $\Ext (ij, -) = \Ext (j, -) \Ext (i, -)$ to obtain the commutative diagram
\DIAGV{70}
{} \n {} \n {} \n {} \n {} \n {} \n {} \n {} \n {\Ext (I_1, C_0)} \n {} \n {} \n {} \n {} \nn
{} \n {} \n {} \n {} \n {} \n {} \n {} \n {\Swarv{\Ext (i, C_0)}{50}} \n {} \n {} \n {} \n {} \n {} \nn
{} \n {} \n {} \n {} \n {} \n {} \n {\Ext (I_0, C_0)} \n {} \n {\sarv{110}} \n {} \n {\Searv{\Ext (I_1, i^{\perp})}{120}} \n {} \n {} \nn
{} \n {} \n {} \n {} \n {} \n {} \n {} \n {} \n {} \n {} \n {} \n {} \n {} \nn
{} \n {} \n {} \n {} \n {} \n {} \n {\Sarv{\Ext (I_0, c^1)}{110}} \n {} \n {\Ext (I_1, C)} \n {} \n {\earv{80}} \n {} \n {\Ext (I_1, I')} \nn
{} \n {} \n {} \n {} \n {} \n {} \n {} \n {\swarv{50}} \n {} \n {} \n {} \n {\swaRv{\Ext (i, I')}{50}} \n {} \nn
{\Ext (I_0, \Xi) :} \n {} \n {\Ext (I_0, J')} \n {} \n {\earv{80}} \n {} \n {\Ext (I_0, C)} \n {} \n {\earv{80}} \n {} \n {\Ext (I_0, I')} \nn
{} \n {\Swarv{\Ext (j, J')}{50}} \n {} \n {} \n {} \n {\swarv{50}} \nn
{\Ext (J_0, J')} \n {} \n {\earv{80}} \n {} \n {\Ext (J_0, C)} \n {} \n {\saRv{\Ext (I_0, c^2)}{110}} \nn
{} \nn
{} \n {} \n {\seaRv{\Ext (J_0, j^{\perp})}{120}} \n {} \n {\sarv{110}} \n {} \n {\Ext (I_0, C_1)} \nn
{} \n {} \n {} \n {} \n {} \n {\swaRv{\Ext (j, C_1)}{50}} \nn
{} \n {} \n {} \n {} \n {\Ext (J_0, C_1).} 
\diag 
Compose the labeled arrows to obtain the commutative diagram
\DIAGV{70}
{} \n {} \n {} \n {} \n {} \n {} \n {\Ext (I_1, C_0)}  \nn
{} \nn
{} \n {} \n {} \n {} \n {} \n {} \n {\Sarv{\Ext (i, c^1)}{110}} \n {} \n {\Searv{\Ext (i,i^{\perp})}{120}} \nn
{} \nn
{\Ext (I_0, \Xi) :} \n {} \n {\Ext (I_0, J')} \n {} \n {\Earv{\Ext (I_0,m)}{80}} \n {} \n {\Ext (I_0, C)} \n {} \n {\Earv{\Ext (I_0, e)}{80}} \n {} \n {\Ext (I_0, I')} \nn
{} \nn
{} \n {} \n {} \n {} \n {\seaRv{\Ext (j,j^{\perp})}{120}} \n {} \n {\saRv{\Ext (j, c^2)}{110}} \nn
{} \nn
{} \n {} \n {} \n {} \n {} \n {} \n {\Ext (J_0, C_1)} 
\diag
Because $\Ext (i, i^{\perp}) = 0,$ we get that $\im \Ext (i, c^1) \subseteq \Ker \Ext (I_0, e).$ Similarly, the hypothesis that $\Ext (j,j^{\perp}) = 0$ implies $\im \Ext (I_0, m) \subseteq \Ker \Ext (j, c^2).$ The middle row is exact, so it follows that 
$\Ext (ij, c) = \Ext (ij, c^2 c^1) = \Ext (j, c^2) \Ext (i, c^1) =  0.$
\end{proof}

The ideal $\Hom$ consists of all morphisms in $\mcA.$ A morphism $i : E \to E'$ is {\em injective} if it belongs to the right perpendicular ideal $\Hom^{\perp},$ denoted by $\mcEinj.$ Thus $\Ext (-,i) = 0,$ which means that for every conflation $\Xi : E \to C \to B,$ the pushout 
$\Ext (B, i)(\Xi )$ of $\Xi$ along $i$ is split. As a consequence of Theorem~\ref{mult/ext}, a right $\Ext$-perpendicular ideal satisfies the following closure property (cf.~\cite[Prop 9]{FGHT}).

\begin{corollary} \label{inj ext}
If $\mcI$ is an ideal in $\mcA,$ then $\mcI^{\perp} = (\Hom \; \mcI)^{\perp} = \mcI^{\perp} \diamond \mcEinj.$
\end{corollary}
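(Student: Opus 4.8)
The plan is to read the corollary off Theorem~\ref{mult/ext} together with the elementary bookkeeping of products and extension ideals; no appeal to enough injective or projective morphisms is needed.

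First I would dispose of the equality $(\Hom\,\mcI)^{\perp} = \mcI^{\perp}$. Since $\mcI$ is an ideal, every generator $hi$ of the product ideal $\Hom\,\mcI$ (with $h \in \Hom$ and $i \in \mcI$ composable) already lies in $\mcI$, so $\Hom\,\mcI \subseteq \mcI$; conversely, if $i : A \to B$ is in $\mcI$, then $i = 1_B i \in \Hom\,\mcI$, so $\mcI \subseteq \Hom\,\mcI$. Hence $\Hom\,\mcI = \mcI$ and a fortiori $(\Hom\,\mcI)^{\perp} = \mcI^{\perp}$. (Equivalently, the two inclusions $\mcI \subseteq \Hom\,\mcI$ and $\Hom\,\mcI \subseteq \mcI$ together with the antitonicity of $(-)^{\perp}$ already force the perpendicular ideals to coincide.)

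Next I would prove $\mcI^{\perp} \diamond \mcEinj = \mcI^{\perp}$. The inclusion $\mcI^{\perp} \subseteq \mcI^{\perp} \diamond \mcEinj$ is immediate: as recorded just after the definition of the extension ideal, $\mcI^{\perp} \diamond \mcJ$ contains $\mcI^{\perp}$ for any ideal $\mcJ$, since $i = i \star 0$. For the reverse inclusion, recall that $\mcEinj = \Hom^{\perp}$ by definition. Applying Theorem~\ref{mult/ext} to the ordered pair of ideals $\Hom$ and $\mcI$ yields $(\Hom\,\mcI)^{\perp} \supseteq \mcI^{\perp} \diamond \Hom^{\perp} = \mcI^{\perp} \diamond \mcEinj$; combining this with the identity $(\Hom\,\mcI)^{\perp} = \mcI^{\perp}$ from the first step gives $\mcI^{\perp} \supseteq \mcI^{\perp} \diamond \mcEinj$. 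Chaining the two equalities produces the asserted $\mcI^{\perp} = (\Hom\,\mcI)^{\perp} = \mcI^{\perp} \diamond \mcEinj$.

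The argument is entirely formal, so I do not expect a genuine obstacle; the only points that require a moment's care are (i) invoking Theorem~\ref{mult/ext} with the factors in the right order, so that the factor $\Hom^{\perp} = \mcEinj$ generated by $\Hom$ on the left of the product lands on the \emph{right} of the $\diamond$, exactly as in the statement, and (ii) observing that it is precisely the identification $\Hom\,\mcI = \mcI$ that converts $(\Hom\,\mcI)^{\perp}$ back into $\mcI^{\perp}$ and closes the chain.
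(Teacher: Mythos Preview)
Your proof is correct and follows the same approach as the paper, which simply records the corollary as an immediate consequence of Theorem~\ref{mult/ext}. You have filled in exactly the details the paper leaves implicit: the identity $\Hom\,\mcI = \mcI$, the inclusion $\mcI^{\perp} \subseteq \mcI^{\perp} \diamond \mcEinj$ from $i = i \star 0$, and the application of Theorem~\ref{mult/ext} with the pair $(\Hom, \mcI)$ in the correct order.
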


An ideal $\mcI$ is {\em idempotent} if $\mcI^2 = \mcI.$ In that case, Theorem~\ref{mult/ext} implies that  
$\mcI^{\perp} = (\mcI^2 )^{\perp} \supseteq \mcI^{\perp} \diamond \mcI^{\perp}.$ An ideal $\mcJ$ is {\em closed under extensions}
if $\mcJ \diamond \mcJ = \mcJ.$

\begin{corollary} \label{idem 1}
If $\mcI$ is an idempotent ideal, then $\mcI^{\perp}$ is closed under extensions.
\end{corollary}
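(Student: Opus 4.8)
The plan is to establish the two inclusions $\mcI^{\perp} \diamond \mcI^{\perp} \subseteq \mcI^{\perp}$ and $\mcI^{\perp} \subseteq \mcI^{\perp} \diamond \mcI^{\perp}$; together they give $\mcI^{\perp} \diamond \mcI^{\perp} = \mcI^{\perp}$, which is precisely the assertion that $\mcI^{\perp}$ is closed under extensions.

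For the inclusion $\mcI^{\perp} \subseteq \mcI^{\perp} \diamond \mcI^{\perp}$, I would simply invoke the elementary observation, recorded when the extension ideal was introduced, that $\mcI \diamond \mcJ$ contains both $\mcI$ and $\mcJ$ for arbitrary ideals $\mcI$, $\mcJ$ (because $i = i \star 0$ and $j = 0 \star j$). Applying this with $\mcJ = \mcI^{\perp}$ yields $\mcI^{\perp} \subseteq \mcI^{\perp} \diamond \mcI^{\perp}$ at once; no hypothesis on $\mcI$ is needed here.

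For the reverse inclusion I would apply Theorem~\ref{mult/ext} with both ideals taken to be $\mcI$, obtaining $(\mcI \mcI)^{\perp} \supseteq \mcI^{\perp} \diamond \mcI^{\perp}$. Now the hypothesis that $\mcI$ is idempotent, $\mcI^2 = \mcI$, is used to rewrite the left-hand side as $(\mcI^2)^{\perp} = \mcI^{\perp}$, so that $\mcI^{\perp} \supseteq \mcI^{\perp} \diamond \mcI^{\perp}$. Combining the two inclusions completes the argument.

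There is essentially no real obstacle: the corollary is a formal consequence of Theorem~\ref{mult/ext} together with the containments $\mcI, \mcJ \subseteq \mcI \diamond \mcJ$. The only thing to be careful about is the bookkeeping of sides — verifying that $(\mcI^2)^{\perp} = \mcI^{\perp}$ is indeed the relevant instance of Theorem~\ref{mult/ext}, as already spelled out in the discussion immediately preceding the statement. One may add, by the dual of Theorem~\ref{mult/ext}, that the symmetric statement also holds: if $\mcJ$ is idempotent then ${^{\perp}}\mcJ$ is closed under extensions.
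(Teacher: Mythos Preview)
Your argument is correct and matches the paper's approach exactly: the inclusion $\mcI^{\perp} \diamond \mcI^{\perp} \subseteq \mcI^{\perp}$ comes from Theorem~\ref{mult/ext} combined with $\mcI^2 = \mcI$, and the reverse inclusion from the general fact $\mcJ \subseteq \mcJ \diamond \mcJ$. This is precisely what the paper does in the sentence immediately preceding the corollary.
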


\section{Salce's Lemma}

Recall from the Introduction that a {\em special} $\mcI$-precover of an object $A \in \mcA$ is a morphism \linebreak $i_1 : I_1 \to A$ in $\mcI$ that arises from
a pushout 
\DIAGV{70}
{\Xi_0 :} \n {K_0} \n {\ear} \n {C_0} \n {\ear} \n {A} \nn
{} \n {\saR{k}} \n {} \n {\saR{c}} \n {} \n {\seql} \nn
{\Xi_1 :} \n {K_1} \n {\ear} \n {C_1} \n {\Ear{i_1}} \n {A} 
\diag
along a morphism $k \in \mcI^{\perp}.$ The morphism $k$ is then called the $\mcI$-syzygy of $A$ and is denoted by $k = \gro_{\mcI} (A)$ or, for brevity, just $\gro (A).$ A special $\mcI$-precover of $A$ is therefore a morphism $i_1 : C_1 \to A$ in $\mcI$ that is part of an 
$\Arr (\mcE)$-conflation of the form
\DIAGV{70}
{\xi:} \n {\gro (A)} \n {\ear} \n {c} \n {\Ear{i}} \n {1_A,}
\diag
where $\gro(A) \in \mcI^{\perp}.$ Because the right term is $1_A,$ the conflation is an ME-conflation.

\begin{definition} \label{spi}
An ideal $\mcI$ of $\mcA$ is a {\em special precovering} ideal if every object in $\mcA$ has a special $\mcI$-precover. An ideal 
$\mcJ \subseteq \mcI^{\perp}$ is an $\mcI${\em -syzygy ideal} if it contains an $\mcI$-syzygy $\gro (A),$ for every object $A \in \mcA.$ Such an ideal will be denoted by $\gro (\mcI).$
\end{definition}

For example if an ideal $\mcI$ is special precovering, then $\mcI^{\perp} = \gro (\mcI)$ is the largest $\mcI$-syzygy ideal. The proof of the following proposition implies~\cite[Prop 11]{FGHT} that a special $\mcI$-precover of an object $A$ is an $\mcI$-precover.

\begin{proposition} \label{double dual}
If $\mcI$ is a special precovering ideal of $(\mcA; \mcE),$ and $\gro (\mcI)$ an $\mcI$-syzygy ideal, then ${^{\perp}}\gro (\mcI) = \mcI.$
\end{proposition}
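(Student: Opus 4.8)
The plan is to prove the two inclusions of ${}^{\perp}\gro (\mcI) = \mcI$ separately. The inclusion $\mcI \subseteq {}^{\perp}\gro (\mcI)$ requires nothing beyond unwinding definitions: by hypothesis $\gro (\mcI) \subseteq \mcI^{\perp},$ so for every $i \in \mcI$ and every $k \in \gro (\mcI)$ one has $\Ext (i,k) = 0,$ which is precisely the statement that $i$ lies in ${}^{\perp}\gro (\mcI).$

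The substance is the reverse inclusion ${}^{\perp}\gro (\mcI) \subseteq \mcI.$ I would take a morphism $f : X \to Y$ in ${}^{\perp}\gro (\mcI)$ and aim to factor it through a morphism in $\mcI.$ Since $\gro (\mcI)$ is an $\mcI$-syzygy ideal, it contains an $\mcI$-syzygy $\gro (Y)$ of $Y,$ which by the definition of an $\mcI$-syzygy means there is a special $\mcI$-precover $i_1 : C_1 \to Y$ (a morphism in $\mcI,$ and in particular a deflation) fitting into the defining pushout square: a conflation $\Xi_0 : K_0 \to C_0 \to Y$ in $\Ext (Y, K_0)$ is pushed out along $\gro (Y) : K_0 \to K_1$ to produce the conflation $\Xi_1 : K_1 \to C_1 \stackrel{i_1}{\to} Y.$ The key step is then to apply the hypothesis $f \in {}^{\perp}\gro (\mcI)$ to the morphism $\gro (Y) \in \gro (\mcI):$ it gives $\Ext (f, \gro (Y)) = 0,$ so the element of $\Ext (X, K_1)$ obtained by evaluating $\Ext (f, \gro (Y))$ on $\Xi_0$ is split. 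By bifunctoriality of $\Ext,$ that same element is obtained by first pushing $\Xi_0$ out along $\gro (Y)$ (producing $\Xi_1$) and then pulling back along $f;$ hence the pullback conflation $K_1 \to Q \stackrel{p}{\to} X$ of $\Xi_1$ along $f$ (which exists by Axiom E$_2^{\op}$, as $i_1$ is a deflation) is split.

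To conclude, I would exploit the splitting: the pullback square supplies a morphism $q : Q \to C_1$ with $i_1 q = f p,$ and a section $s : X \to Q$ of $p$ (so $p s = 1_X$) yields $f = f p s = i_1 (q s),$ so $f$ factors through $i_1 \in \mcI$ and therefore $f \in \mcI,$ as required. I do not expect a serious obstacle; the two points that need care are, first, invoking the $\mcI$-syzygy-ideal hypothesis in the precise form needed — that for $Y$ there is a special $\mcI$-precover whose associated syzygy is the chosen $\gro (Y) \in \gro (\mcI),$ not merely some special precover of $Y$ — and, second, the routine verification that the two ways of computing $\Ext (f, \gro (Y))(\Xi_0)$ coincide, which is what lets the $\Ext$-vanishing translate into a split pullback conflation. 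I would also note that running the same factorization argument with $f$ replaced by an arbitrary $i' \in \mcI$ with codomain $Y$ (permissible since $\mcI \subseteq {}^{\perp}\gro (\mcI)$) recovers the assertion that a special $\mcI$-precover is an $\mcI$-precover.
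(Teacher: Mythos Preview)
Your proof is correct and follows essentially the same approach as the paper: for the nontrivial inclusion, both arguments pull back the special $\mcI$-precover conflation of the codomain along the given morphism in ${}^{\perp}\gro(\mcI),$ use $\Ext$-orthogonality with the syzygy morphism to force a splitting, and extract the desired factorization through $i_1.$ The only cosmetic difference is that the paper phrases the vanishing step as null-homotopy of the resulting ME-conflation $\gro(A) \to cc' \to i'$ (its primary definition of $\Ext$-orthogonality), whereas you invoke the equivalent bifunctor formulation directly to see that the pulled-back conflation splits; the content is the same.
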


\begin{proof}
Because $\gro (\mcI) \subseteq \mcI^{\perp},$ it follows certainly that $\mcI \subseteq {^{\perp}}\gro (\mcI).$ To prove the converse inclusion,
let $A \in \mcA$ and consider a special $\mcI$-precover $i_1 : C_1 \to A$ as above, and take the pullback of $\Xi_0$ along 
$i' \in {^{\perp}}\gro (\mcI),$
\DIAGV{70}
{\Xi'_0 :} \n {K_0} \n {\ear} \n {C'} \n {\ear} \n {I'} \nn
{} \n {\seql} \n {} \n {\saR{c'}} \n {} \n {\saR{i'}} \nn
{\Xi_0 :} \n {K_0} \n {\ear} \n {C_0} \n {\ear} \n {A} \nn
{} \n {\saR{\gro (A)}} \n {} \n {\saR{c}} \n {} \n {\seql} \nn
{\Xi_1 :} \n {K_1} \n {\ear} \n {C_1} \n {\Ear{i_1}} \n {A.} 
\diag
This is an ME-conflation of the form $\gro (A) \to cc' \to i'.$ As $\Ext_{\mcA} (i', \gro (A)) = 0,$ this conflation is null-homotopic. The homotopy  then yields a factorization
\DIAGV{80}
{} \n {} \n {} \n {} \n {I'} \nn
{} \n {} \n {} \n {\Swar{g}} \n {\saR{i'}} \nn
{K_1} \n {\ear} \n {C_1} \n {\Ear{i_1}} \n {A,}
\diag
which implies $i' = i_1g \in \mcI.$ 
\end{proof}

Given an ideal $\mcJ,$ the notion of a {\em special} $\mcJ${\em -preenvelope} is defined dually. The ideal $\mcJ$ is a {\em special preenveloping} ideal if every object $B$ in $\mcA$ has a special $\mcJ$-preenvelope. A pair of ideals $(\mcI, \mcJ)$ is an {\em ideal cotorsion pair} if $\mcJ = \mcI^{\perp}$ and $\mcI = {^{\perp}}\mcJ.$ Proposition~\ref{double dual} implies that if $\mcI$ is a special precovering ideal, then the ideal pair $(\mcI, \mcI^{\perp})$ is an ideal cotorsion pair that is {\em cogenerated} by $\gro (\mcI),$ in the sense that 
$(\mcI, \mcI^{\perp}) = ({^{\perp}}\gro (\mcI), ({^{\perp}}\gro (\mcI))^{\perp}).$ An ideal cotorsion pair $(\mcI, \mcJ)$ is {\em complete} if $\mcI$ is special precovering and $\mcJ$ is special preenveloping. The next result is Salce's Lemma, which implies that if $\mcI$ is a special precovering ideal, then the ideal cotorsion pair $(\mcI, \mcI^{\perp})$ is complete. It generalizes the implication ($2$) $\Rightarrow$ ($3$) 
of~\cite[Theorem 1]{FGHT}, by weakening the hypothesis to one that is self dual.

Recall that the exact category $(\mcA; \mcE)$ {\em has enough injective morphisms} if for every object $A \in \mcA,$ there is an injective inflation 
$e : A \to E.$ The notion of a {\em projective morphism} and that of an exact category {\em having enough projective morphisms} are defined dually. 
 
\begin{theorem} \label{Salce} {\rm (Salce's Lemma)}
Let $(\mcA; \mcE)$ be an exact category with enough injective morphisms and enough projective morphisms. The rule $\mcI \mapsto \mcI^{\perp}$ is a bijective correspondence between the class of special precovering ideals $\mcI$ of $(\mcA; \mcE)$ and that of its special preenveloping ideals $\mcJ.$ The inverse rule is given by $\mcJ \mapsto {^{\perp}}\mcJ.$  
\end{theorem}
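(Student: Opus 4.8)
The plan is to show the two rules $\mcI \mapsto \mcI^{\perp}$ and $\mcJ \mapsto {}^{\perp}\mcJ$ are mutually inverse bijections when restricted to special precovering and special preenveloping ideals respectively. The argument has two symmetric halves: (a) if $\mcI$ is special precovering, then $\mcI^{\perp}$ is special preenveloping and ${}^{\perp}(\mcI^{\perp}) = \mcI$; (b) dually, if $\mcJ$ is special preenveloping, then ${}^{\perp}\mcJ$ is special precovering and $({}^{\perp}\mcJ)^{\perp} = \mcJ$. The identity ${}^{\perp}(\mcI^{\perp}) = \mcI$ for a special precovering ideal is already available: it is Proposition~\ref{double dual} applied with the syzygy ideal $\gro(\mcI) = \mcI^{\perp}$, which gives ${}^{\perp}\mcI^{\perp} = \mcI$. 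So the real content is: \emph{$\mcI$ special precovering $\Rightarrow$ $\mcI^{\perp}$ special preenveloping}, together with its dual. By symmetry it suffices to prove the one implication, and then the two together close the loop: starting from a special preenveloping $\mcJ$, the dual of Proposition~\ref{double dual} gives $({}^{\perp}\mcJ)^{\perp} = \mcJ$, so $\mcJ$ is in the image of the first rule, and ${}^{\perp}\mcJ$ is special precovering by the dual implication.

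First I would fix a special precovering ideal $\mcI$ and an object $B \in \mcA$, and construct a special $\mcI^{\perp}$-preenvelope of $B$. Using that $(\mcA;\mcE)$ has enough injective morphisms, choose an injective inflation $e : B \to E$ with cokernel $B'$, giving a conflation $\Xi_0 : B \to E \to B'$. Now take a special $\mcI$-precover of $B'$: this is a deflation $i_1 : C_1 \to B'$ in $\mcI$ fitting into a pushout of conflations along the $\mcI$-syzygy $\gro(B') \in \mcI^{\perp}$, i.e.\ a conflation $K_1 \to C_1 \to B'$ obtained from $K_0 \to C_0 \to B'$ by pushout along $k = \gro(B') : K_0 \to K_1$. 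Pull $\Xi_0$ back along $i_1 : C_1 \to B'$ to get a conflation $\Xi : B \to E' \to C_1$; I claim the resulting inflation $j : B \to E'$ is a special $\mcI^{\perp}$-preenvelope. The cosyzygy of this preenvelope should be (a morphism closely related to) $i_1$, which lies in $\mcI$; and the inflation $j$ itself should lie in $\mcI^{\perp}$. The first point is essentially formal from the pullback construction. For the second, I need to check $\Ext(i, j) = 0$ for every $i \in \mcI$; this follows by applying $\Ext(i,-)$ to the conflation $\Xi$ and using that $i_1$ factors through $\mcI$ while $\gro(B')$ annihilates $\Ext(i,-)$ — more precisely, one shows any ME-extension $i \to a \to j$ is null-homotopic by lifting along the special precover, much as in the proof of Proposition~\ref{double dual} but with the roles of pullback and pushout exchanged. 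The enough-projectives hypothesis enters in the dual construction.

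The remaining step is to verify that these two rules are genuinely inverse on the nose, i.e.\ that $\mcI^{\perp}$ as constructed is not merely \emph{a} special preenveloping ideal but that ${}^{\perp}(\mcI^{\perp})$ recovers $\mcI$ and symmetrically — but as noted, Proposition~\ref{double dual} and its dual handle this once both implications (special precovering $\Rightarrow$ dual is special preenveloping, and conversely) are in hand. One should also record that $\mcI^{\perp}$ is indeed always an ideal (clear from its definition as a right $\Ext$-perpendicular) and that ${}^{\perp}\mcJ$ is one too.

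I expect the main obstacle to be the careful bookkeeping in showing the pulled-back inflation $j : B \to E'$ lies in $\mcI^{\perp}$ and functions as a \emph{special} preenvelope — that is, identifying the cosyzygy correctly and checking it lands in $\mcI$, all within the ME exact structure rather than the ambient $\Arr(\mcE)$. The subtlety flagged in the Caution after the definition of $\Ext$-orthogonality — that $\Ext$-orthogonality of a pair is strictly weaker than $\Ext$-vanishing in $(\Arr(\mcA);\Arr(\mcE))$ — means one must argue directly with ME-conflations and their mono-epi factorizations, rather than quoting generic homological facts about the arrow category. Concretely, given an ME-extension $\xi : i \to a \to j$ with $i \in \mcI$, I would factor it through its mono-epi decomposition, use the pushout defining the special $\mcI$-precover of $B'$ to lift, and assemble the null-homotopy from the homotopy produced by $\Ext(i, \gro(B')) = 0$ together with the section/retraction data of the pullback squares, paralleling the endgame of the proof of Lemma~\ref{ext crit}.
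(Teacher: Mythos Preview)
Your overall architecture matches the paper exactly: reduce to showing that $\mcI$ special precovering implies $\mcI^{\perp}$ special preenveloping (plus the dual), invoke Proposition~\ref{double dual} for the round-trip identities, and construct the candidate preenvelope by pulling back an injective inflation $e : B \to E \to B'$ along a special $\mcI$-precover $i_1 : C_1 \to B'$. The morphism $j : B \to E'$ you produce is the same $j_1$ the paper produces, and identifying $i_1 \in \mcI$ as the cosyzygy is correct.

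The gap is precisely at the step you flag as the main obstacle: showing $j \in \mcI^{\perp}$. Neither of your proposed methods works as stated. You cannot ``dualize Proposition~\ref{double dual}'' here, since that argument consumes a special $\mcI^{\perp}$-preenvelope, which is what you are building. And applying $\Ext(i,-)$ to your pulled-back conflation $\Xi : B \to E' \to C_1$ does not bring the syzygy $\gro(B')$ into play: $\gro(B')$ lives in the precover data of $B'$, not in $\Xi$. Your final paragraph gestures at assembling a null-homotopy from $\Ext(i,\gro(B')) = 0$ plus section/retraction data, but the single pullback along $i_1$ does not supply the needed factorization of $j$; you would have to rebuild the relevant intermediate object by hand.

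The paper's resolution is to pull back not just along $i_1$ but along the whole arrow $i : c \to 1_{B'}$ in $(\Arr(\mcA);\ME)$, obtaining a richer diagram in which $j_1$ visibly factors as $j_1 = b\, j_0$ through an object $B_0$ sitting in a conflation $W_0 \to B_0 \to E$. This factorization, read through Lemma~\ref{ext crit}, exhibits $j_1 \in \mcI^{\perp} \diamond \mcE\mbox{-}\mathrm{inj}$ (one leg is $k_1\gro(B') \in \mcI^{\perp}$, the other is $e \in \mcE\mbox{-}\mathrm{inj}$). The punchline is then Corollary~\ref{inj ext}: $\mcI^{\perp} \diamond \mcE\mbox{-}\mathrm{inj} = \mcI^{\perp}$. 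You never mention the extension ideal $\mcI^{\perp} \diamond \mcE\mbox{-}\mathrm{inj}$ or Corollary~\ref{inj ext}; that identity is the organizing tool that turns the construction into a proof.
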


\begin{proof}
We use the hypothesis that there exist enough injective morphisms to prove that if $\mcI$ is a special precovering ideal, then $\mcI^{\perp}$ is a special preenveloping ideal. The proof that if $\mcJ$ is a special preenveloping ideal, then ${^{\perp}}\mcJ$ is a special precovering ideal is dual; it uses the dual hypothesis that there are enough projective morphisms. That the inverse rule is given by $\mcJ \mapsto {^{\perp}}\mcJ$ follows from Proposition~\ref{double dual}, because $\gro (\mcI) \subseteq \mcI^{\perp}.$

Let us proceed as in the proof of~\cite[Thm 18]{FGHT}. Given an object $A \in \mcA,$ we construct a special $\mcI^{\perp}$-preenvelope of $A.$ There is a conflation $\Xi : A \stackrel{e}{\rightarrow} E \to N,$ where $e : A \to E$ is an injective morphism. The cokernel $N$ has a special $\mcI$-precover $i_1 : C_1 \to N$ that arises as part of an ME-conflation $\gro (N) \to c \stackrel{i}{\rightarrow} 1_N.$ Take the pullback in 
$(\Arr (\mcA); \Arr (\mcE))$ of $1_{\Xi} : 1_A \stackrel{e}{\rightarrow} 1_E \to 1_N$ along $i : c \to 1_N$ to obtain
\DIAGV{80}
{} \n {} \n {} \n {\gro (N)} \n {\eeqlv{40}} \n {\gro (N)} \nn
{} \n {} \n {} \n {\saR{k}} \n {} \n {\sar} \nn
{} \n {1_A} \n {\Ear{j}} \n {b} \n {\ear} \n {c} \nn
{} \n {\seql} \n {} \n {\sar} \n {} \n {\saR{i}} \nn
{1_{\Xi}:} \n {1_A} \n {\Ear{e}} \n {1_E} \n {\ear} \n {1_N.}
\diag
This construction illustrates Theorem~\ref{mono-epi} nicely, as all the rows and columns are evidently ME-conflations. Let us regard this commutative diagram as a diagram in $\mcA,$
\DIAGV{60}
{} \n {} \n {} \n {} \n {} \n {} \n {W_0} \n {} \n {\eeqlv{100}} \n {} \n {W_0} \nn
{} \n {} \n {} \n {} \n {} \n {\Swar{\gro (N)}} \n {} \n {} \n {} \n {\Swar{\gro (N)}} \n {} \nn
{} \n {} \n {} \n {} \n {W_1} \n {} \n {\cross{\saRv{\movename{k_0}{0}{10}}{100}}{\eeqlv{100}}} \n {} \n {W_1} \n {} \n {\sarv{100}} \nn
{} \n {} \n {} \n {} \n {} \n {} \n {} \n {} \n {} \n {} \n {} \nn
{} \n {} \n {A} \n {} \n {\cross{\Earv{\movename{j_0}{20}{0}}{100}}{\saRv{\movename{k_1}{0}{10}}{100}}} \n {} \n {B_0} \n {} \n {\cross{\sarv{100}}{\earv{100}}} \n {} \n {C_0} \nn
{} \n {\sweql} \n {} \n {} \n {} \n {\swaR{b}} \n {} \n {} \n {} \n {\swaR{c}} \n {} \nn
{A} \n {} \n {{\cross{\Earv{\movename{j_1}{15}{0}}{100}}{\seqlv{100}}}} \n {} \n {B_1} \n {} \n {\cross{\earv{100}}{\sarv{100}}} \n {} \n {C_1} \n {} \n {\saRv{\movename{i_0}{0}{-10}}{100}} \nn
{} \nn
{\seqlv{100}} \n {} \n {A} \n {} \n {\cross{\sarv{100}}{\earv{100}}} \n {} \n {E} \n {} \n {\cross{\saRv{\movename{i_1}{0}{-10}}{100}}{\earv{100}}} \n {} \n {N} \nn
{} \n {\sweql} \n {} \n {} \n {} \n {\sweql} \n {} \n {} \n {} \n {\sweql} \n {} \nn
{A} \n {} \n {\Earv{e}{100}} \n {} \n {E} \n {} \n {\earv{100}} \n {} \n {N.} {} \n {}
\diag
We claim that the morphism $j_1 : A \to B_1$ is an $\mcI^{\perp}$-special preenvelope. Because it is obtained by pullback along $i_1 \in \mcI,$ it is enough to verify that $j_1 \in \mcI^{\perp}.$ Let us extract from the diagram above the commutative diagram
\DIAGV{80}
{} \n {} \n {A} \n {} \n {} \nn
{} \n {} \n {\saR{j_0}} \n {\Sear{e}} \n {} \nn
{W_0} \n {\ear} \n {B_0} \n {\ear} \n {E} \nn
{} \n {\seaR{k_1 \gro(N)}} \n {\saR{b}} \n {} \n {} \nn
{} \n {} \n {B_1,}
\diag
where the middle row is the conflation that appears in the back middle column of the previous diagram. Because $k_1 \gro (N) \in \mcI^{\perp}$ and 
$e \in \mcEinj,$ Proposition~\ref{ext crit} implies that $j_1$ belongs to $\mcI^{\perp} \diamond  \mcEinj.$ By Corollary~\ref{inj ext}, this latter ideal is contained in $\mcI^{\perp},$ as required.
\end{proof}

In the proof of Salce's Lemma, the morphism $\gro (N)$ may be taken from a given $\mcI$-syzygy ideal $\gro (\mcI).$ The $\mcI^{\perp}$-preenvelope 
$j_1 : A \to B_1$ constructed in that proof then belongs to $\gro (\mcI) \diamond \mcEinj.$ This implies that every morphism in $\mcI^{\perp}$ whose domain is $A$ factors through $j_1$ and, therefore, belongs to $\gro (\mcI) \diamond \mcEinj.$ Thus 
$\mcI^{\perp} = \gro (\mcI) \diamond \mcEinj,$ for every $\mcI$-syzygy ideal $\gro (\mcI).$ Corollary~\ref{inj ext}, on the other hand, implies that if $\mcJ \subseteq \mcI^{\perp}$ is an ideal, then $\mcJ \diamond \mcEinj$ is also contained in $\mcI^{\perp}.$ In view of that corollary, the equation $\mcI^{\perp} = \gro (\mcI) \diamond \mcEinj$ expresses that every $\mcI$-syzygy ideal $\gro (\mcI)$ nearly generates the ideal $\mcI^{\perp}.$ It turns out that this property characterizes $\mcI$-syzygy ideals.

\begin{theorem} \label{syzygy ideal}
Let $\mcI$ be a special precovering ideal of an exact category $(\mcA; \mcE)$ with enough injective morphisms. An ideal $\mcJ \subseteq \mcI^{\perp}$ is an $\mcI$-syzygy ideal if and only if $\mcJ \diamond \mcEinj = \mcI^{\perp}.$ 
\end{theorem}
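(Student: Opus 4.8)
The plan is to prove the two implications separately. For ``$\mcJ$ an $\mcI$-syzygy ideal $\Rightarrow \mcJ \diamond \mcEinj = \mcI^{\perp}$'' I would argue essentially as in the discussion preceding the statement: the inclusion $\mcJ \diamond \mcEinj \subseteq \mcI^{\perp}$ is immediate from $\mcJ \subseteq \mcI^{\perp}$ together with Corollary~\ref{inj ext}, and for the reverse inclusion I would rerun the proof of Salce's Lemma (Theorem~\ref{Salce}) for a given object $A,$ taking the syzygy $\gro(N)$ of the relevant cokernel $N$ from $\mcJ$; the special $\mcI^{\perp}$-preenvelope $j_1 : A \to B_1$ then lies in $\mcJ \diamond \mcEinj$ by Lemma~\ref{ext crit}, and since $j_1$ is in particular an $\mcI^{\perp}$-preenvelope, every morphism of $\mcI^{\perp}$ with domain $A$ factors through it and so belongs to $\mcJ \diamond \mcEinj.$

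For the converse, assume $\mcJ \subseteq \mcI^{\perp}$ and $\mcJ \diamond \mcEinj = \mcI^{\perp},$ and fix an object $A;$ the task is to produce a special $\mcI$-precover of $A$ whose syzygy lies in $\mcJ.$ Begin with any special $\mcI$-precover: a conflation $\Xi_0 : K_0 \stackrel{m_0}{\to} C_0 \stackrel{e_0}{\to} A$ and a syzygy $k = \gro(A) : K_0 \to K_1$ in $\mcI^{\perp}$ whose pushout $\Xi_1$ has deflation $i_1 : C_1 \to A$ in $\mcI.$ Since $k \in \mcI^{\perp} = \mcJ \diamond \mcEinj,$ Lemma~\ref{ext crit}(1) supplies a conflation $\Sigma : V \stackrel{u}{\to} W \stackrel{v}{\to} Z$ together with a factorization $k = k^{2} k^{1},$ $k^{1} : K_0 \to W,$ $k^{2} : W \to K_1,$ such that $v k^{1} \in \mcEinj$ and $j := k^{2} u \in \mcJ.$

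Now I would rebuild the special precover around $j.$ Push $\Xi_0$ out along $k^{1}$ to get a conflation $\Xi_0' : W \stackrel{m_0'}{\to} C_0' \stackrel{e_0'}{\to} A,$ noting that $\Xi_1$ is the pushout of $\Xi_0'$ along $k^{2},$ with pushout morphism $c^{2} : C_0' \to C_1$ satisfying $i_1 c^{2} = e_0'$ and $c^{2} m_0' = m_1 k^{2},$ where $m_1 : K_1 \to C_1$ is the inflation of $\Xi_1.$ Because $v k^{1} \in \mcEinj,$ the pushout of $\Xi_0$ along $v k^{1}$ — equivalently, the pushout of $\Xi_0'$ along $v$ — is a split conflation $Z \to Z \oplus A \to A,$ and the resulting morphism of conflations $\Xi_0' \to (Z \to Z \oplus A \to A)$ is the identity on the cokernel $A$ and the deflation $v$ on the kernels; by the standard facts on pushing out along a deflation and the Noether/$3 \times 3$ lemma~\cite[\S 3]{B} its middle component $w : C_0' \to Z \oplus A$ is a deflation whose kernel is the inflation $m_0' u : V \to C_0',$ and $\pi_A w = e_0'.$ Pulling the conflation $V \stackrel{m_0' u}{\to} C_0' \stackrel{w}{\to} Z \oplus A$ back along the section $\iota_A : A \to Z \oplus A$ produces a conflation $\bar{\Xi}_0 : V \stackrel{\bar m}{\to} \bar C \stackrel{\bar e}{\to} A$ and a morphism $p : \bar C \to C_0'$ with $e_0' p = \bar e$ and $p \bar m = m_0' u.$

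It remains to check that $\bar{\Xi}_0$ and $j \in \mcJ$ constitute a special $\mcI$-precover of $A.$ Put $\phi := c^{2} p : \bar C \to C_1;$ then $i_1 \phi = e_0' p = \bar e$ and $\phi \bar m = c^{2} m_0' u = m_1 k^{2} u = m_1 j,$ so the pushout square of $\bar{\Xi}_0$ along $j$ induces a morphism $\psi : \bar C_1 \to C_1$ that restricts to $\phi$ on $\bar C$ and to $m_1$ on $K_1.$ Since the two structural morphisms into the pushout $\bar C_1$ are jointly epic, $i_1 \psi$ coincides with the deflation $\bar i_1 : \bar C_1 \to A$ of the pushout of $\bar{\Xi}_0$ along $j;$ hence $\bar i_1 = i_1 \psi \in \mcI,$ so $\bar i_1$ is a special $\mcI$-precover of $A$ with syzygy $j \in \mcJ.$ As $A$ was arbitrary, $\mcJ$ is an $\mcI$-syzygy ideal. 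I expect the one genuinely delicate point to be the bookkeeping in the third paragraph — verifying that $w$ is a deflation with kernel $m_0' u,$ that $\pi_A w = e_0',$ and that $p$ behaves as claimed on $\bar m$ — since it is exactly these identifications that force the new syzygy into $\mcJ$ while keeping the new precover inside $\mcI.$
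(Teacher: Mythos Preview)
Your proof is correct, and the forward direction matches the paper's exactly. For the converse, however, you and the paper take noticeably different routes to the same destination.

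The paper's argument is functorial: it applies $\Ext(A,-)$ to the diagram coming from Lemma~\ref{ext crit}, uses the vanishing $\Ext(A,e)=0$ (since $e\in\mcEinj$) together with exactness of the row $\Ext(A,J)\to\Ext(A,W)\to\Ext(A,E)$ to lift $\Ext(A,\gro^1)(\Xi_0)$ to some $\Upsilon\in\Ext(A,J)$, and then observes that $\Ext(A,j)(\Upsilon)=\Xi_1$. This is three lines once the $\Ext$ formalism is in place.

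Your argument unpacks this lift diagrammatically: the conflation $\bar\Xi_0$ you build by pushing out to $\Xi_0'$, identifying $w$ as a deflation with kernel $m_0'u$, and pulling back along $\iota_A$ is precisely an explicit representative of the paper's abstract $\Upsilon$. Your final step, producing $\psi$ and concluding $\bar i_1=i_1\psi\in\mcI$, is the diagrammatic version of the equality $\Ext(A,j)(\Upsilon)=\Xi_1$; indeed, since $\psi$ restricts to $1_{K_1}$ and $1_A$ on the ends, the short five lemma (\cite[Cor~3.2]{B}) makes $\psi$ an isomorphism, so your $\bar\Xi_1$ is literally $\Xi_1$. The delicate identification you flag (that $w$ is a deflation with kernel $m_0'u$) is exactly the Noether isomorphism~\cite[Lemma~3.5]{B}, so it is standard once named. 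Your approach has the virtue of being entirely elementary, avoiding any appeal to properties of $\Ext$ as a functor on an exact category; the paper's version is considerably shorter and makes the structure of the argument (a lifting along an exact sequence) transparent.
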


\begin{proof}
One direction of the equivalence has just been established, so suppose that the ideal $\mcJ$ satisfies the equality 
$\mcJ \diamond \mcEinj = \mcI^{\perp}$ and let $A \in \mcA.$ There is a special $\mcI$-precover $i_1 : C_1 \to A$
\DIAGV{70}
{\Xi_0:} \n {W_0} \n {\ear} \n {C_0} \n {\ear} \n {A} \nn
{} \n {\saR{\gro}} \n {} \n {\saR{c}} \n {} \n {\seql} \nn
{\Xi_1:} \n {W_1} \n {\ear} \n {C_1} \n {\Ear{i_1}} \n {A,} 
\diag
where $\gro \in \mcI^{\perp} = \mcJ \diamond \mcEinj$ is a given $\mcI$-syzygy of $A.$ By Lemma~\ref{ext crit}, the morphism 
$\gro : W_0 \to W_1$ may be expressed as a composition, shown in the middle column of 
\DIAGV{90}
{} \n {} \n {W_0} \n {} \n {} \nn
{} \n {} \n {\saR{\gro^1}} \n {\Sear{e}} \n {} \nn
{J} \n {\Ear{m}} \n {W} \n {\Ear{p}} \n {E} \nn
{} \n {\seaR{j}} \n {\saR{\gro^2}} \n {} \n {} \nn
{} \n {} \n {W_1,}
\diag
where $j : J \to W_1$ belongs to $\mcJ,$ $e$ is an injective morphism and the middle row is a conflation in $(\mcA; \mcE).$ It suffices to verify that $j$ is itself an $\mcI$-syzygy of $A.$ Let us show, moreover, that $\Xi_1 \in \Ext (A,W_1)$ arises as the pushout along $j$ of some conflation in 
$\Ext (A, J).$ Apply the covariant functor $\Ext (A,-)$ to the preceding diagram to obtain
\DIAGV{70}
{} \n {} \n {} \n {} \n {\Ext (A,W_0)} \nn
{} \nn
{} \n {} \n {} \n {} \n {\Sarv{\Ext (A,\gro^1)}{110}} \n {} \n {\Searv{0}{120}} \nn
{} \nn
{\Ext (A,J)} \n {} \n {\Earv{\Ext (A,m)}{80}} \n {} \n {\Ext (A,W)} \n {} \n {\Earv{\Ext (A,p)}{80}} \n {} \n {\Ext (A,E)} \nn
{} \nn
{} \n {} \n {\seaRv{\Ext (A,j)}{120}} \n {} \n {\saRv{\Ext (A,\gro^2)}{110}} \nn
{} \nn
{} \n {} \n {} \n {} \n {\Ext (A,W_1)}
\diag
and note that $\Ext (A,e) = 0.$ The middle row is exact, so that $\Ext (A,\gro^1)(\Xi_0)$ belongs to the image of $\Ext (A,m).$ If 
$\Upsilon \in \Ext (A,J)$ is a preimage, then $$\Ext (A,j)(\Upsilon ) = \Ext (A,\gro^2)\Ext (A,m)(\Upsilon ) = 
\Ext (A,\gro^2 ) \Ext (A,\gro^1) (\Xi_0) = \Ext (A,\gro ) (\Xi_0) = \Xi_1,$$ 
as claimed. Thus $j = \gro_{\mcI} (A)$ is an $\mcI$-syzygy of $A.$
\end{proof}

\section{Object-Special Precovers}

Let $A$ be an object of $(\mcA; \mcE)$ and $\mcI$ an ideal. A special $\mcI$-precover of $A$ is said to be an {\em object-special} $\mcI${\em -precover} of $A$ if there is an $\mcI$-syzygy $\gro (A)$ of $A$ that is an isomorphism. Then there is an object, let us denote it by 
$\grO (A),$ such that $\gro (A) \isom 1_{\grO (A)}.$ A special $\mcI$-precover $i'_1 : C'_1 \to A$ appears as part of the ME-conflation in the top row of
\DIAGV{70}
{\xi':} \n {\gro (A)} \n {\ear} \n {c'} \n {\Ear{i'}} \n {1_A} \nn
{} \n {\saR{f}} \n {} \n {\sar} \n {} \n {\seql} \nn
{\xi:} \n {1_{\grO (A)}} \n {\ear} \n {c} \n {\Ear{i}} \n {1_A.}
\diag
Taking the pushout of $\xi'$ in $(\Arr (\mcA); \ME)$ along an isomorphism $f : \gro (A) \to 1_{\grO (A)}$ yields the ME-conflation $\xi,$ which is given by 
\DIAGV{70}
{\Xi_0:} \n {\grO (A)} \n {\ear} \n {C_0} \n {\ear} \n {A} \nn
{} \n {\seql} \n {} \n {\saR{c}} \n {} \n {\seql} \nn
{\Xi_1:} \n {\grO (A)} \n {\ear} \n {C_1} \n {\Ear{i_1}} \n {A.}
\diag
The kernel $\grO (A)$ of $i_1 : C_1 \to A$ belongs to $\Ob (\mcI^{\perp})$ and is called an {\em object} $\mcI$-{\em syzygy} of $A.$ To avoid confusion, the object $\mcI$-syzygy of $A$ may be denoted more precisely as $\grO_{\mcI}(A).$

\begin{definition} \label{ospi}
An ideal $\mcI$ is an {\em object-special precovering} ideal if every object $A$ in $\mcA$ has an object-special $\mcI$-precover.
\end{definition}

An object $E$ of $\mcA$ is {\em injective} if the morphism $1_E : E \to E$ is injective. The subcategory of $\mcA$ of injective objects is denoted by $\mcEInj := \Ob (\mcEinj).$ We say that the category $(\mcA; \mcE)$ has enough injective objects if for every object $A,$ there exists an inflation 
$e : A \to E$ with $E$ injective.

\begin{proposition} \label{object syzygy}
A special precovering ideal $\mcI$ is object-special precovering if and only if some $\mcI$-syzygy ideal $\gro (\mcI)$ is an object ideal.
If the category $(\mcA; \mcE)$ has enough injective objects, this is equivalent to the ideal $\mcI^{\perp}$ being an object ideal.
\end{proposition}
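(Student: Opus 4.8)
The plan is to establish the first equivalence by a direct construction in both directions, and then to deduce the second one by combining Theorem~\ref{syzygy ideal} with Theorem~\ref{object ext}. Throughout I will use the reformulation, read off from the discussion preceding Definition~\ref{ospi}, that an object $A$ has an object-special $\mcI$-precover precisely when there is a deflation $i_1 : C_1 \to A$ in $\mcI$ whose kernel $\grO (A)$ lies in $\Ob (\mcI^{\perp})$: for then $1_{\grO (A)} \in \mcI^{\perp},$ and the conflation $\grO (A) \to C_1 \to A$ is its own pushout along $1_{\grO (A)},$ so $i_1$ is a special $\mcI$-precover whose $\mcI$-syzygy $1_{\grO (A)}$ is an isomorphism.

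For the forward direction of the first statement I would assume $\mcI$ is object-special precovering, choose for each $A$ such a deflation $i_1^A$ with kernel $\grO (A) \in \Ob (\mcI^{\perp}),$ and set $\gro (\mcI) := \langle \grO (A) \mid A \in \mcA \rangle,$ which is an object ideal by construction. To see it is an $\mcI$-syzygy ideal I would check that $\gro (\mcI) \subseteq \mcI^{\perp}$: since $\Ob (\mcI^{\perp})$ is an additive subcategory by Proposition~\ref{ideal/additive}, it contains every finite direct sum of the $\grO (A),$ so by Propositions~\ref{ideal/additive} and~\ref{add closure} every morphism of $\gro (\mcI)$ factors through an object of $\Ob (\mcI^{\perp})$ and hence lies in the ideal $\mcI^{\perp};$ and I would note that $1_{\grO (A)} \in \gro (\mcI)$ is an $\mcI$-syzygy of $A$ for each $A.$ For the converse, assume some $\mcI$-syzygy ideal $\gro (\mcI)$ is an object ideal, fix $A,$ and take a special $\mcI$-precover arising as the pushout of a conflation $\Xi_0 : W_0 \to C_0 \to A$ along an $\mcI$-syzygy $\gro (A) \in \gro (\mcI),$ giving $\Xi_1 : W_1 \to C_1 \stackrel{i_1}{\to} A.$ By Proposition~\ref{ideal/additive} the morphism $\gro (A)$ factors as $W_0 \stackrel{\alpha}{\to} X \stackrel{\beta}{\to} W_1$ through an object $X \in \Ob (\gro (\mcI)) \subseteq \Ob (\mcI^{\perp}).$ Pushing $\Xi_0$ out along $\alpha$ yields a conflation $\Xi_X : X \to C_0' \stackrel{d}{\to} A;$ since $\Xi_1 = \Ext (A, \gro (A))(\Xi_0) = \Ext (A,\beta)(\Xi_X)$ is the pushout of $\Xi_X$ along $\beta,$ its structural map $q' : C_0' \to C_1$ satisfies $d = i_1 q' \in \mcI.$ Thus $d : C_0' \to A$ is a deflation in $\mcI$ with kernel $X \in \Ob (\mcI^{\perp}),$ an object-special $\mcI$-precover of $A.$

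For the second statement, under the assumption of enough injective objects, one direction is immediate and needs no extra hypothesis: if $\mcI^{\perp}$ is an object ideal, then, being the largest $\mcI$-syzygy ideal, it is an object ideal that is an $\mcI$-syzygy ideal, so the first statement makes $\mcI$ object-special precovering. For the other direction I would first observe that enough injective objects gives enough injective morphisms, because an inflation $A \to E$ into an injective object $E$ lands in $\Ext (-,E) = 0$ and is therefore an injective morphism; Theorem~\ref{syzygy ideal} then applies and gives $\mcI^{\perp} = \gro (\mcI) \diamond \mcEinj,$ where $\gro (\mcI)$ is an object $\mcI$-syzygy ideal provided by the first statement. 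By Theorem~\ref{object ext} it remains only to show that $\mcEinj$ is an object ideal, in fact $\mcEinj = \langle \mcEInj \rangle.$ A morphism through an injective object is injective since $\Ext (-,E) = 0$ for $E \in \mcEInj;$ conversely, given an injective morphism $i : E \to E',$ I would choose a conflation $E \to I \to N$ with $I$ injective, push it out along $i$ to a conflation $E' \to I' \to N$ that splits because $\Ext (N,i) = 0,$ and compose a retraction of its inflation with the pushout map $I \to I'$ to factor $i$ through the injective object $I.$ Then $\mcEinj = \langle \mcEInj \rangle$ is an object ideal, and $\mcI^{\perp} = \gro (\mcI) \diamond \mcEinj$ is an object ideal by Theorem~\ref{object ext}.

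The bookkeeping I am deferring is the verification that $1_{\grO (A)}$ is indeed an $\mcI$-syzygy of $A$ and the diagram chase identifying the pushout map $q'$ in the converse of the first statement. The genuine point, and the one I expect to be the crux, is the last step: recognizing that the hypothesis of enough injective objects is exactly what turns $\mcEinj$ into the object ideal $\langle \mcEInj \rangle,$ so that the identity $\mcI^{\perp} = \gro (\mcI) \diamond \mcEinj$ can be fed into Theorem~\ref{object ext}; everything else is formal.
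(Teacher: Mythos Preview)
Your argument is correct and follows essentially the same route as the paper: the forward direction of the first equivalence is identical, your converse is a direct construction of what the paper outsources to \cite[Prop 25]{FGHT}, and for the second statement both you and the paper combine Theorem~\ref{syzygy ideal} with Theorem~\ref{object ext}. You spell out two points the paper leaves implicit---that enough injective objects yields enough injective morphisms, and that $\mcEinj = \langle \mcEInj \rangle$ under this hypothesis---which are exactly what is needed to legitimately invoke Theorem~\ref{object ext} on $\gro(\mcI) \diamond \mcEinj$.
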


\begin{proof}
If $\mcI$ is an object-special precovering ideal, take $\gro (\mcI)$ to be any object ideal $\langle \grO (A) \; | \; A \in \mcA \rangle$ generated by object $\mcI$-syzygies. Conversely, if some $\mcI$-syzygy ideal $\gro (\mcI)$ is an object ideal, then it is possible to find, for every $A \in \mcA$ an $\mcI$-syzygy that factors through an object $\grO (A)$ in $\Ob (\mcI^{\perp}).$ The proof of Proposition 25 of~\cite{FGHT} shows then how to construct a deflation $i : C \to A$ in $\mcI$ with kernel $\grO (A).$

If $(\mcA; \mcE)$ has enough injective objects, and $\gro (\mcI)$ is an object ideal, then Theorem~\ref{object ext} implies that 
$\gro (\mcI) \diamond \mcEinj$ is itself an object ideal. By Theorem~\ref{syzygy ideal}, $\gro (\mcI) \diamond \mcEinj = \mcI^{\perp}.$
\end{proof}

A subcategory $\mcC$ of $\mcA$ that is closed under finite direct sums is an $\mcI${\em -syzygy subcategory} if it generates an $\mcI$-syzygy ideal, $\langle \mcC \rangle = \gro (\mcI).$ An $\mcI$-syzygy subcategory will be denoted by $\grO (\mcI).$

\begin{proposition} \label{ext by objects}
Suppose that $(\mcA; \mcE)$ has enough injective objects and that $\mcI$ is an object-special precovering ideal in $\mcA.$ A subcategory $\mcC$ of 
$\Ob (\mcI^{\perp})$ that is closed under finite direct sums is an $\mcI$-syzygy subcategory if and only if 
$\add (\mcC \star \mcEInj) = \Ob (\mcI^{\perp}).$
\end{proposition}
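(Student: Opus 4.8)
The plan is to leverage Proposition~\ref{object syzygy}, which already tells us that $\mcI^{\perp}$ is an object ideal, together with Theorem~\ref{syzygy ideal}, which characterizes $\mcI$-syzygy ideals by the equation $\mcJ \diamond \mcEinj = \mcI^{\perp}$. First I would observe that a subcategory $\mcC \subseteq \Ob(\mcI^{\perp})$ closed under finite direct sums generates an object ideal $\langle \mcC \rangle$ with $\Ob(\langle \mcC \rangle) = \add(\mcC)$ by Proposition~\ref{add closure}. Thus $\mcC$ is an $\mcI$-syzygy subcategory precisely when $\langle \mcC \rangle = \gro(\mcI)$ for some $\mcI$-syzygy ideal, which by Theorem~\ref{syzygy ideal} happens if and only if $\langle \mcC \rangle \diamond \mcEinj = \mcI^{\perp}$. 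So the whole proposition reduces to showing that, under the standing hypotheses, $$\langle \mcC \rangle \diamond \mcEinj = \mcI^{\perp} \quad \Longleftrightarrow \quad \add(\mcC \star \mcEInj) = \Ob(\mcI^{\perp}).$$

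For this equivalence I would use Theorem~\ref{object ext}. Since both $\langle \mcC \rangle$ and $\mcEinj = \langle \mcEInj \rangle$ are object ideals, that theorem gives $\langle \mcC \rangle \diamond \mcEinj = \langle \Ob(\langle \mcC \rangle) \star \mcEInj \rangle = \langle \add(\mcC) \star \mcEInj \rangle$, and moreover $\Ob(\langle \mcC \rangle \diamond \mcEinj) = \add[\add(\mcC) \star \mcEInj]$. One checks easily that $\add(\mcC) \star \mcEInj$ and $\mcC \star \mcEInj$ have the same additive closure: any object in $\add(\mcC) \star \mcEInj$ is a middle term of a conflation $X \to Z \to E$ with $X$ a summand of some $C \in \mcC$, and completing $X$ to $C$ and $E$ to $E \oplus E'$ appropriately (taking direct sums of conflations as in Proposition 2.9 of~\cite{B}) realizes a summand-supermodule of $Z$ inside $\mcC \star \mcEInj$; so $\add[\add(\mcC) \star \mcEInj] = \add(\mcC \star \mcEInj)$. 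Since $\mcI^{\perp}$ is itself an object ideal (Proposition~\ref{object syzygy}) with $\Ob(\mcI^{\perp})$ additive, the equality of object ideals $\langle \mcC \rangle \diamond \mcEinj = \mcI^{\perp}$ is equivalent to the equality of their object subcategories, i.e.\ $\add(\mcC \star \mcEInj) = \Ob(\mcI^{\perp})$ by Proposition~\ref{ideal/additive}. Combining these identifications yields the claimed equivalence.

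The main obstacle I anticipate is the passage between $\add(\mcC) \star \mcEInj$ and $\mcC \star \mcEInj$: one must verify carefully that splicing a summand-conflation into a conflation over the full object (and correspondingly enlarging the injective term) stays within the $\star$-operation at the level of the \emph{subcategory} $\mcC \star \mcEInj$ after passing to $\add$. This is the same kind of bookkeeping as in the last paragraph of the proof of Theorem~\ref{object ext}, where it is emphasized that $\Ob(\mcI) \star \Ob(\mcJ)$ is closed under finite direct sums but not necessarily under summands; here one uses closure of $\mcEInj$ under finite direct sums and summands, and of $\mcC$ under finite direct sums, to absorb the discrepancy. Once that is in hand, everything else is a direct bundling of Proposition~\ref{object syzygy}, Theorem~\ref{syzygy ideal}, Theorem~\ref{object ext}, and Proposition~\ref{ideal/additive}, with no further computation.
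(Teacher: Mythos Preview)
Your proposal is correct and follows essentially the same route as the paper: reduce the question, via Theorem~\ref{syzygy ideal} and Proposition~\ref{object syzygy}, to comparing $\langle \mcC \rangle \diamond \mcEinj$ with $\mcI^{\perp}$, and then pass to objects using Theorem~\ref{object ext} and Proposition~\ref{add closure}.

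The only noteworthy difference is organizational. The paper dispatches the forward direction by citing~\cite[Thm~27]{FGHT} directly, and for the converse it needs only the \emph{trivial} inclusion $\add(\mcC \star \mcEInj) \subseteq \add(\add(\mcC) \star \mcEInj)$, since the reverse inclusion $\langle \mcC \rangle \diamond \mcEinj \subseteq \mcI^{\perp}$ comes for free from $\mcC \subseteq \Ob(\mcI^{\perp})$ and Corollary~\ref{inj ext}. Your unified treatment of both directions forces you to establish the full equality $\add(\add(\mcC) \star \mcEInj) = \add(\mcC \star \mcEInj)$, which is the ``obstacle'' you flag; your sketch for it (completing a summand $X$ of some $C \in \mcC$ by adding the split conflation $X' \to X' \to 0$) is fine. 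So the paper's argument is slightly shorter by sidestepping this step, at the cost of an external citation; your version is self-contained but does a bit more work.
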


\begin{proof}
If $\grO (\mcI)$ is an $\mcI$-syzygy subcategory, then $\add (\grO (\mcI) \star \mcEInj) = \Ob (\mcI^{\perp}),$ by~\cite[Thm 27]{FGHT}. Conversely, suppose that $\mcC$ is a subcategory of $\Ob (\mcI^{\perp}),$ closed under finite direct sums, and satisfying 
$\add (\mcC \star \mcEInj) = \Ob (\mcI^{\perp}).$ Then
\begin{eqnarray*}
\Ob [\langle \mcC \rangle \diamond \mcEinj] & = & \add (\Ob (\langle \mcC \rangle) \star \mcEInj ) \\
& = & \add (\add (\mcC) \star \mcEInj) \\
& \supseteq & \add (\mcC \star \mcEInj) = \Ob (\mcI^{\perp}).
\end{eqnarray*}
The first equality follows from Theorem~\ref{object ext}; the second from Proposition~\ref{add closure}. By Proposition~\ref{object syzygy}, $\mcI^{\perp}$ is an object ideal, so that $\langle \mcC \rangle \diamond \mcEinj = \mcI^{\perp}.$ By Theorem~\ref{syzygy ideal}, the ideal 
$\langle \mcC \rangle = \gro (\mcI)$ is then an $\mcI$-syzygy ideal, as required.  
\end{proof}

\section{The Ghost Lemma} 

This section is devoted to the study of special $\mcI \mcJ$-precovers, in case $\mcI$ and $\mcJ$ are special precovering ideals. So let $A$ be an object of $\mcA,$ and suppose that there exists a special $\mcI$-precover $i_1 : C_1^{\mcI} \to A$ of $A$ that appears as part of the ME-conflation
in $\Arr (\mcA)$ given by
\DIAGV{80}
{\xi_{\mcI} :} \n {\gro_{\mcI}} \n {\ear} \n {c^{\mcI}} \n {\Ear{i}} \n {1_A,}
\diag 
where $c^{\mcI} : C_0^{\mcI} \to C_1^{\mcI}$ and $C_1^{\mcI}$ has a special $\mcJ$-precover $j'_1 : C_1^{\mcJ} \to C_1^{\mcI}$ that arises as part of the ME-conflation of arrows
\DIAGV{80}
{\xi'_{\mcJ} :} \n {\gro_{\mcJ}} \n {\ear} \n {c^{\mcJ}} \n {\Ear{j'}} \n {1_{C_1^{\mcI}}.}
\diag 
Compose the ME-conflation $\xi'_{\mcJ}$ with the pullback along the morphism given by the arrow \linebreak 
$c^{\mcI} : C_0^{\mcI} \to C_1^{\mcI}$ to obtain the ME-conflation
\DIAGV{80}
{W_0} \n {\ear} \n {C} \n {\ear} \n {C_0^{\mcI}} \nn
{\seql} \n {} \n {\saR{c}} \n {} \n {\saR{c^{\mcI}}} \nn
{W_0} \n {\ear} \n {C_0^{\mcJ}} \n {\ear} \n {C_1^{\mcI}} \nn
{\saR{\gro_{\mcJ}}} \n {} \n {\saR{c^{\mcJ}}} \n {} \n {\seql} \nn
{W_1} \n {\ear} \n {C_1^{\mcJ}} \n {\Ear{j'_1}} \n {C_1^{\mcI},} 
\diag
which will be called $\xi_{\mcJ}.$ If we further denote $c^{\mcJ}c$ by $c^{\mcI \mcJ},$ we may express this as the ME-conflation
\DIAGV{80}
{\xi_{\mcJ} :} \n {\gro_{\mcJ}} \n {\ear} \n {c^{\mcI \mcJ}} \n {\Ear{j}} \n {c^{\mcI}.}
\diag 
It is important to observe that $j_1 = j'_1 \in \mcJ.$ By Theorem~\ref{mono-epi}, a commutative diagram 
\DIAGV{80}
{\gro_{\mcJ}} \n {\eeql} \n {\gro_{\mcJ}} \nn
{\sar} \n {} \n {\sar} \nn
{\gro_{\mcJ} \star \gro_{\mcI}} \n {\ear} \n {c^{\mcI \mcJ}} \n {\Ear{ij}} \n {1_A} \nn
{\sar} \n {} \n {\saR{j}} \n {} \n {\seql} \nn
{\gro_{\mcI}} \n {\ear} \n {c^{\mcI}} \n {\Ear{i}} \n {1_A,} 
\diag
arises in $\Arr (\mcA),$ all of whose rows and columns are ME-conflations, by Axiom $E_1^{\op}$ for an exact category. Now 
$(ij)_1 = i_1 j_1 \in \mcI \mcJ$ and Theorem~\ref{mult/ext} implies that $\gro_{\mcJ} \star \gro_{\mcI} \in (\mcI \mcJ)^{\perp}.$ It follows that the ME-conflation in the middle row yields a special $\mcI \mcJ$-precover $i_1j_1: C_1^{\mcI \mcJ} \to A$ of $A.$ If the notation above is amended slightly, the equation $\gro_{\mcI \mcJ} = \gro_{\mcJ} \star \gro_{\mcI}$ suggests that the relationship between the domain of a special $\mcI$-precover of $A$ and its $\mcI$-syzygy is analogous to the relationship, expressed by the Chain Rule, between a differentiable function and its differential.

\begin{theorem} \label{syzygy ext} {\rm (The Chain Rule)}
Let $\mcI$ and $\mcJ$ be ideals and $A \in \mcA.$ If $i_1 : C^{\mcI}(A) \to A$ is an $\mcI$-special precover with $\mcI$-syzygy
$\gro_{\mcI} (A)$ and $j_1 : C^{\mcJ}(C^{\mcI}(A)) \to C^{\mcI}(A)$ is a $\mcJ$-special precover of $C^{\mcI}(A)$ with $\mcJ$-syzygy 
$\gro_{\mcJ} (C^{\mcI}(A)),$ then
$i_1j_1 : C^{\mcJ}(C^{\mcI}(A)) \to A$ is an $(\mcI \mcJ)$-special precover of $A$ with $(\mcI \mcJ)$-syzygy 
$$\gro_{\mcI \mcJ} (A) = \gro_{\mcJ}(C^{\mcI}(A)) \star \gro_{\mcI}(A).$$
If the precovers $i_1$ and $j_1$ are object-special, with kernels $\grO_{\mcI}(A)$ and $\grO_{\mcJ}(C^{\mcI}(A)),$ respectively, then
$$\grO_{\mcI \mcJ} (A) = \grO_{\mcJ}(C^{\mcI}(A)) \star \grO_{\mcI}(A).$$
\end{theorem}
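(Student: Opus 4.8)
The plan is to execute in full the construction that was sketched in the paragraphs immediately preceding the theorem, and then to specialize it to the object‑special situation. For the first statement I would record the $\mcI$‑special precover as the $\ME$‑conflation $\xi_{\mcI}\colon \gro_{\mcI}(A)\to c^{\mcI}\to 1_A$ and the $\mcJ$‑special precover of $C^{\mcI}(A)$ as the $\ME$‑conflation $\xi'_{\mcJ}\colon \gro_{\mcJ}(C^{\mcI}(A))\to c^{\mcJ}\to 1_{C^{\mcI}(A)}$, then pull $\xi'_{\mcJ}$ back along the morphism of arrows determined by the arrow $c^{\mcI}$ to obtain an $\ME$‑conflation $\xi_{\mcJ}\colon \gro_{\mcJ}(C^{\mcI}(A))\to c^{\mcI\mcJ}\to c^{\mcI}$ whose deflation has bottom component $j_1\in\mcJ$. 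Composing the two $\ME$‑deflations $c^{\mcI\mcJ}\to c^{\mcI}\to 1_A$ and invoking Axiom E$_1^{\op}$ for the exact category $(\Arr(\mcA);\ME)$ of Theorem~\ref{mono-epi}, together with the standard description of the kernel of a composite of deflations in an exact category, produces the $\ME$‑conflation $\gro_{\mcJ}(C^{\mcI}(A))\star\gro_{\mcI}(A)\to c^{\mcI\mcJ}\xrightarrow{i_1 j_1}1_A$. Since $i_1 j_1\in\mcI\mcJ$ while $\gro_{\mcJ}(C^{\mcI}(A))\in\mcJ^{\perp}$ and $\gro_{\mcI}(A)\in\mcI^{\perp}$, Theorem~\ref{mult/ext} gives $\gro_{\mcJ}(C^{\mcI}(A))\star\gro_{\mcI}(A)\in\mcJ^{\perp}\diamond\mcI^{\perp}\subseteq(\mcI\mcJ)^{\perp}$, so this $\ME$‑conflation exhibits $i_1 j_1$ as an $(\mcI\mcJ)$‑special precover of $A$ with the claimed $(\mcI\mcJ)$‑syzygy.

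For the object‑special statement I would re‑run exactly this construction, but now choose the syzygies to be identity arrows, $\gro_{\mcI}(A)=1_{\grO_{\mcI}(A)}$ and $\gro_{\mcJ}(C^{\mcI}(A))=1_{\grO_{\mcJ}(C^{\mcI}(A))}$ (legitimate because $i_1$ and $j_1$ are object‑special with those kernels), and then identify the composite syzygy $\gro_{\mcJ}(C^{\mcI}(A))\star\gro_{\mcI}(A)$ with the identity arrow on the corresponding object‑extension. By Definition~\ref{mono epi def}, an $\ME$‑conflation of the form $1_{\grO_{\mcJ}(C^{\mcI}(A))}\to a\to 1_{\grO_{\mcI}(A)}$ factors as a morphism of conflations $\Xi_0\to\Xi\to\Xi_1$ in $(\mcA;\mcE)$ in which all three conflations have $\grO_{\mcJ}(C^{\mcI}(A))$ as subobject and $\grO_{\mcI}(A)$ as quotient, with $\Xi_0\to\Xi$ and $\Xi\to\Xi_1$ each the identity on sub‑ and quotient‑objects.

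The main obstacle — a small one — is the identification made at this point: one must conclude that such an arrow $a$ is an isomorphism in $\mcA$. This follows from the short five lemma for exact categories~\cite[Cor 3.2]{B}, which forces $\Xi_0\to\Xi$ and $\Xi\to\Xi_1$ to be isomorphisms, so that $a$ is an isomorphism and is therefore isomorphic in $\Arr(\mcA)$ to $1_{A_0}$, where $A_0$ is the middle term of the conflation $\Xi_0\colon\grO_{\mcJ}(C^{\mcI}(A))\to A_0\to\grO_{\mcI}(A)$; thus $\gro_{\mcJ}(C^{\mcI}(A))\star\gro_{\mcI}(A)\isom 1_{\grO}$ with $\grO:=A_0\in\grO_{\mcJ}(C^{\mcI}(A))\star\grO_{\mcI}(A)$. (That $\grO$ lies in $\Ob((\mcI\mcJ)^{\perp})$ reduces, via Theorem~\ref{object ext} and Theorem~\ref{mult/ext}, to the inclusion $\langle\grO_{\mcJ}(C^{\mcI}(A))\rangle\diamond\langle\grO_{\mcI}(A)\rangle\subseteq\mcJ^{\perp}\diamond\mcI^{\perp}\subseteq(\mcI\mcJ)^{\perp}$.) With $\grO$ in hand, the middle row of the $3\times 3$ diagram from the first part becomes an $\ME$‑conflation $1_{\grO}\to c^{\mcI\mcJ}\xrightarrow{i_1 j_1}1_A$, which by the analysis of object‑special precovers realizes $i_1 j_1$ as an object‑special $(\mcI\mcJ)$‑precover of $A$ with object $(\mcI\mcJ)$‑syzygy $\grO_{\mcI\mcJ}(A)=\grO=\grO_{\mcJ}(C^{\mcI}(A))\star\grO_{\mcI}(A)$, as required.
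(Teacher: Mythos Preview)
Your proposal is correct and follows essentially the same approach as the paper: the first statement is precisely the construction carried out in the paragraphs preceding the theorem, and for the object-special statement you, like the paper, take the syzygies to be identity arrows and argue that their $\ME$-extension is again an isomorphism. Your invocation of the short five lemma to justify that an $\ME$-extension of identity arrows is an isomorphism is a welcome elaboration of a step the paper leaves implicit.
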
 

\begin{proof}
All that needs to be verified is the last statement. If $i_1$ and $j_1$ are object-special precovers, then we may take the $\mcI$-syzygy 
$\gro_{\mcI}(A)$ and the $\mcJ$-syzygy $\gro_{\mcJ}(C^{\mcI}(A))$ to be isomorphisms. The extension 
$\gro_{\mcI \mcJ} (A) = \gro_{\mcJ}(C^{\mcI}(A)) \star \gro_{\mcI}(A)$ is then also an isomorphism. Furthermore, if $\grO_{\mcI}(A)$ and 
$\grO_{\mcJ} (C^{\mcI}(A))$ are the associated object syzygies, then the isomorphism $\gro_{\mcI \mcJ}(A)$ is isomorphic in the arrow category to the identity morphism on some extension of objects $\grO_{\mcJ}(C^{\mcI}(A)) \star \grO_{\mcI}(A).$
\end{proof}

The Chain Rule yields the following important property of special precovering ideals.
 
\begin{corollary} \label{two ideals}
If $\mcI$ and $\mcJ$ are special precovering ideals of the exact category $(\mcA; \mcE),$ then so is the product ideal $\mcI \mcJ,$ with 
$$\gro (\mcI \mcJ) = \gro (\mcJ) \diamond \gro (\mcI).$$
If $\mcI$ and $\mcJ$ are object-special precovering ideals then so is $\mcI \mcJ,$ and 
$\grO (\mcI \mcJ) = \grO (\mcJ) \star \grO (\mcI).$
\end{corollary}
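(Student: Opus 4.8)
\emph{Plan of proof.} The Corollary is meant to be read off from the Chain Rule (Theorem~\ref{syzygy ext}) by iterating the hypotheses. Fix an object $A \in \mcA$ and fix an $\mcI$-syzygy ideal $\gro(\mcI) \subseteq \mcI^{\perp}$ and a $\mcJ$-syzygy ideal $\gro(\mcJ) \subseteq \mcJ^{\perp}$ (Definition~\ref{spi}). Since $\mcI$ is special precovering there is a special $\mcI$-precover $i_1 : C^{\mcI}(A) \to A$ whose $\mcI$-syzygy $\gro_{\mcI}(A)$ may be taken in $\gro(\mcI)$; applying the hypothesis that $\mcJ$ is special precovering \emph{to the object $C^{\mcI}(A)$}, there is a special $\mcJ$-precover $j_1 : C^{\mcJ}(C^{\mcI}(A)) \to C^{\mcI}(A)$ with $\mcJ$-syzygy $\gro_{\mcJ}(C^{\mcI}(A)) \in \gro(\mcJ)$. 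By the Chain Rule, $i_1 j_1$ is an $(\mcI\mcJ)$-special precover of $A$ with $(\mcI\mcJ)$-syzygy $\gro_{\mcI\mcJ}(A) = \gro_{\mcJ}(C^{\mcI}(A)) \star \gro_{\mcI}(A)$. As $A$ was arbitrary, $\mcI\mcJ$ is a special precovering ideal.

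Next I would pin down an $(\mcI\mcJ)$-syzygy ideal. The syzygy $\gro_{\mcI\mcJ}(A)$ just produced is an extension of a member of $\gro(\mcI)$ by a member of $\gro(\mcJ)$, so it lies in $\gro(\mcJ) \star \gro(\mcI) \subseteq \gro(\mcJ) \diamond \gro(\mcI)$; thus $\gro(\mcJ) \diamond \gro(\mcI)$ contains an $(\mcI\mcJ)$-syzygy of every object. It remains only to see $\gro(\mcJ) \diamond \gro(\mcI) \subseteq (\mcI\mcJ)^{\perp}$: since $\gro(\mcI) \subseteq \mcI^{\perp}$ and $\gro(\mcJ) \subseteq \mcJ^{\perp}$ and the operation $\diamond$ is visibly monotone in each variable, $\gro(\mcJ) \diamond \gro(\mcI) \subseteq \mcJ^{\perp} \diamond \mcI^{\perp} \subseteq (\mcI\mcJ)^{\perp}$, the last inclusion being Theorem~\ref{mult/ext}. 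Hence $\gro(\mcJ) \diamond \gro(\mcI)$ is an $(\mcI\mcJ)$-syzygy ideal, which is the content of $\gro(\mcI\mcJ) = \gro(\mcJ) \diamond \gro(\mcI)$.

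For the object-special assertion I would rerun the construction with object-special precovers (Definition~\ref{ospi}): the $\mcI$-syzygy $\gro_{\mcI}(A)$ and the $\mcJ$-syzygy $\gro_{\mcJ}(C^{\mcI}(A))$ may be chosen to be isomorphisms, and by the last assertion of the Chain Rule their extension $\gro_{\mcI\mcJ}(A)$ is again an isomorphism, isomorphic in the arrow category to the identity on an object $\grO_{\mcJ}(C^{\mcI}(A)) \star \grO_{\mcI}(A) \in \Ob((\mcI\mcJ)^{\perp})$. Therefore every object of $\mcA$ has an object-special $\mcI\mcJ$-precover, so $\mcI\mcJ$ is object-special precovering. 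To identify $\grO(\mcJ) \star \grO(\mcI)$ as an $(\mcI\mcJ)$-syzygy subcategory, note it is closed under finite direct sums (a direct sum of conflations is a conflation), and compute, applying Theorem~\ref{object ext} to the object ideals $\langle \grO(\mcJ) \rangle = \gro(\mcJ)$ and $\langle \grO(\mcI) \rangle = \gro(\mcI)$ together with Proposition~\ref{add closure}, that $\gro(\mcJ) \diamond \gro(\mcI) = \langle \add(\grO(\mcJ)) \star \add(\grO(\mcI)) \rangle = \langle \grO(\mcJ) \star \grO(\mcI) \rangle$. The middle equality uses the elementary fact that $\add(\add(\mcX) \star \add(\mcY)) = \add(\mcX \star \mcY)$, proved by absorbing the superfluous direct summands into split conflations.

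The proof is essentially bookkeeping once the Chain Rule is available; the only point demanding genuine care is the last one, namely matching the \emph{subcategory} $\grO(\mcJ) \star \grO(\mcI)$ with the \emph{ideal} $\gro(\mcJ) \diamond \gro(\mcI)$. A syzygy subcategory is assumed closed only under finite direct sums, not under direct summands, so one cannot identify $\Ob(\langle \grO(\cdot) \rangle)$ with $\grO(\cdot)$ on the nose and must instead route the argument through additive closures as above. Everything else follows formally from Theorems~\ref{syzygy ext}, \ref{mult/ext}, and \ref{object ext}.
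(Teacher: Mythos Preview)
Your proof is correct and follows exactly the approach the paper intends: the Corollary is stated immediately after the Chain Rule (Theorem~\ref{syzygy ext}) with no explicit proof, so the reader is meant to read it off by applying the Chain Rule to an arbitrary object and then checking that $\gro(\mcJ) \diamond \gro(\mcI) \subseteq (\mcI\mcJ)^{\perp}$ via Theorem~\ref{mult/ext}. Your careful bookkeeping in the object-special case---routing through additive closures to match the subcategory $\grO(\mcJ)\star\grO(\mcI)$ with the ideal $\gro(\mcJ)\diamond\gro(\mcI)$ via Theorem~\ref{object ext}---fills in a detail the paper leaves entirely to the reader.
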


Let $\mcI = \mcJ$ in Corollary~\ref{two ideals} and iterate the process finitely many times to see that every special (resp., object-special) precovering ideal $\mcI$ of $(\mcA; \mcE)$ gives rise to a filtration
$$\Hom = \mcI^0 \supseteq \mcI \supseteq \mcI^2 \supseteq \cdots \supseteq \mcI^n \supseteq \cdots$$ 
of special (resp., object-special) precovering ideals. 
If $\mcI$ is an object-special precovering ideal, and $\grO (\mcI)$ is an $\mcI$-syzygy subcategory, then Corollary~\ref{two ideals} implies that, for every $n > 0,$ an $\mcI^n$-syzygy subcategory is given by the category $\grO (\mcI^n) = \grO (\mcI)^{\star n},$ the $n$-fold extension of $\grO (\mcI).$ The objects $U$ of this category are those for which there exists a filtration, that is, a sequence
\DIAGV{80}
{0 = U_0} \n {\Ear{i_1}} \n {U_1} \n {\Ear{i_2}} \n {\cdots} \n {\Ear{i_n}} \n {U_n = U}
\diag
of inflations, of length $n,$ whose cokernels lie in $\grO (\mcI).$ An important special case of Corollary~\ref{two ideals} is when $\gro (\mcI) = \mcI^{\perp}$ and $\gro (\mcJ) = \mcJ^{\perp}.$

\begin{corollary} \label{ideal ghost lemma}
Let $\mcI$ and $\mcJ$ be special precovering ideals of an exact category $(\mcA; \mcE)$ that has enough injective morphisms. Then
$(\mcI \mcJ)^{\perp} = \mcJ^{\perp} \diamond \mcI^{\perp}.$
\end{corollary}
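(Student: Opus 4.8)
The plan is to get one inclusion for free from Theorem~\ref{mult/ext} and to obtain the other by identifying $\mcJ^\perp \diamond \mcI^\perp$ as an $(\mcI\mcJ)$-syzygy ideal and then ``absorbing'' the injective morphisms.

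First, Theorem~\ref{mult/ext} already gives $(\mcI\mcJ)^\perp \supseteq \mcJ^\perp \diamond \mcI^\perp$, so only the reverse inclusion $(\mcI\mcJ)^\perp \subseteq \mcJ^\perp \diamond \mcI^\perp$ has to be proved. Since $\mcI$ and $\mcJ$ are special precovering ideals, the ideals $\mcI^\perp$ and $\mcJ^\perp$ are $\mcI$- and $\mcJ$-syzygy ideals respectively (indeed the largest ones, by the remark following Definition~\ref{spi}). By the Chain Rule, in the form of Corollary~\ref{two ideals}, the product $\mcI\mcJ$ is again a special precovering ideal and $\gro(\mcJ) \diamond \gro(\mcI)$ is an $(\mcI\mcJ)$-syzygy ideal for any choice of syzygy ideals; taking $\gro(\mcI) = \mcI^\perp$ and $\gro(\mcJ) = \mcJ^\perp$ shows that $\mcJ^\perp \diamond \mcI^\perp$ is an $(\mcI\mcJ)$-syzygy ideal.

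Next I would invoke Theorem~\ref{syzygy ideal}, which applies because $(\mcA;\mcE)$ has enough injective morphisms. Apply it with the ``outer'' special precovering ideal taken to be $\mcI\mcJ$ and the candidate subideal taken to be $\mcJ^\perp \diamond \mcI^\perp$. This subideal is contained in $(\mcI\mcJ)^\perp$ by Theorem~\ref{mult/ext}, and we have just seen that it is an $(\mcI\mcJ)$-syzygy ideal; hence Theorem~\ref{syzygy ideal} yields
$$(\mcJ^\perp \diamond \mcI^\perp) \diamond \mcEinj = (\mcI\mcJ)^\perp .$$
Finally, by associativity of the extension operation on ideals (Proposition~\ref{ideal associativity}) the left-hand side equals $\mcJ^\perp \diamond (\mcI^\perp \diamond \mcEinj)$, and by Corollary~\ref{inj ext} we have $\mcI^\perp \diamond \mcEinj = \mcI^\perp$, so the left-hand side is just $\mcJ^\perp \diamond \mcI^\perp$. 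This gives $(\mcI\mcJ)^\perp = \mcJ^\perp \diamond \mcI^\perp$, as desired.

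The proof is largely a matter of assembling previously established facts; the only step carrying real content is the identification of $\mcJ^\perp \diamond \mcI^\perp$ as an $(\mcI\mcJ)$-syzygy ideal, which rests on the Chain Rule and therefore ultimately on the mono-epi exact structure $(\Arr(\mcA);\ME)$. The one hypothesis that must be watched is ``enough injective morphisms,'' required for Theorem~\ref{syzygy ideal}; no projectivity assumption enters this particular statement.
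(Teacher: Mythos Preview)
Your proof is correct and follows essentially the same route as the paper's: use the Chain Rule (Corollary~\ref{two ideals}) to see that $\mcJ^\perp \diamond \mcI^\perp$ is an $(\mcI\mcJ)$-syzygy ideal, apply Theorem~\ref{syzygy ideal} to get $(\mcJ^\perp \diamond \mcI^\perp)\diamond \mcEinj = (\mcI\mcJ)^\perp$, and then collapse the $\mcEinj$ factor via associativity and Corollary~\ref{inj ext}. The only difference is that you make the inclusion $\mcJ^\perp \diamond \mcI^\perp \subseteq (\mcI\mcJ)^\perp$ (needed to invoke Theorem~\ref{syzygy ideal}) explicit via Theorem~\ref{mult/ext}, whereas the paper leaves this implicit.
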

 
\begin{proof}
By the Chain Rule, $\mcJ^{\perp} \diamond \mcI^{\perp}$ is an $\mcI \mcJ$-syzygy ideal, so that 
$(\mcJ^{\perp} \diamond \mcI^{\perp}) \diamond \mcEinj = (\mcI \mcJ)^{\perp},$ by Theorem~\ref{syzygy ideal}. 
By Proposition~\ref{ideal associativity} and the fact that $\mcI^{\perp}$ is an $\mcI$-syzygy ideal, the left side of the equation is equal to 
$\mcJ^{\perp} \diamond (\mcI^{\perp} \diamond \mcEinj) = \mcJ^{\perp} \diamond \mcI^{\perp}.$
\end{proof}

Collecting the observations of the previous two corollaries and their duals provides the centerpiece of our paper.

\begin{theorem} \label{the ghost lemma} {\rm (The Ghost Lemma)}
Let $(\mcA; \mcE)$ be an exact category with enough injective morphism and enough projective morphisms. The class of special precovering (resp., preenveloping) ideals is closed under products $\mcI \mcJ$ and extensions $\mcI \diamond \mcJ.$ Moreover, the bijective correspondence $\mcI \mapsto \mcI^{\perp}$ satisfies
$$(\mcI \mcJ)^{\perp} = \mcJ^{\perp} \diamond \mcI^{\perp} \;\; \mbox{and} \;\; (\mcI \diamond \mcJ)^{\perp} = \mcJ^{\perp} \mcI^{\perp}.$$
\end{theorem}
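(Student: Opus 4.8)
The plan is to reduce Theorem~\ref{the ghost lemma} almost entirely to results already established, together with a duality argument. Two of the four assertions on the precovering side are already in hand: closure under the product $\mcI \mcJ$ is Corollary~\ref{two ideals}, and the identity $(\mcI \mcJ)^{\perp} = \mcJ^{\perp} \diamond \mcI^{\perp}$ is Corollary~\ref{ideal ghost lemma} (here we use that $(\mcA; \mcE)$ has enough injective morphisms). Thus the only genuinely new content on the precovering side is that the class of special precovering ideals is closed under the extension operation $\mcI \diamond \mcJ,$ together with the companion identity $(\mcI \diamond \mcJ)^{\perp} = \mcJ^{\perp}\mcI^{\perp};$ all the statements about special \emph{preenveloping} ideals will then follow by running the argument in the opposite exact category $(\mcA^{\op}; \mcE^{\op}),$ in which special precovering and special preenveloping ideals, the two perpendicular operations, ``enough injective'' and ``enough projective morphisms,'' and the product and extension of ideals are interchanged in pairs.

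First I would record the two dual statements that the opposite category hands us: if $\mcK$ and $\mcL$ are special preenveloping ideals of $(\mcA; \mcE)$ then $\mcK \mcL$ is again special preenveloping (dual of Corollary~\ref{two ideals}), and if in addition $(\mcA; \mcE)$ has enough projective morphisms then ${^{\perp}}(\mcK \mcL) = {^{\perp}}\mcL \diamond {^{\perp}}\mcK$ (dual of Corollary~\ref{ideal ghost lemma}). Now let $\mcI$ and $\mcJ$ be special precovering ideals. By Salce's Lemma, Theorem~\ref{Salce}, the ideals $\mcI^{\perp}$ and $\mcJ^{\perp}$ are special preenveloping and ${^{\perp}}(\mcI^{\perp}) = \mcI,$ ${^{\perp}}(\mcJ^{\perp}) = \mcJ.$ Applying the two dual statements with $\mcK = \mcJ^{\perp}$ and $\mcL = \mcI^{\perp}$ shows that $\mcJ^{\perp}\mcI^{\perp}$ is a special preenveloping ideal and that
$${^{\perp}}(\mcJ^{\perp}\mcI^{\perp}) = {^{\perp}}(\mcI^{\perp}) \diamond {^{\perp}}(\mcJ^{\perp}) = \mcI \diamond \mcJ.$$
Since $\mcJ^{\perp}\mcI^{\perp}$ is special preenveloping, Salce's Lemma says its left perpendicular $\mcI \diamond \mcJ$ is a special precovering ideal; this is the missing closure statement. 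Passing to right perpendiculars in the displayed equality and using once more that the special preenveloping ideal $\mcJ^{\perp}\mcI^{\perp}$ satisfies $({^{\perp}}(\mcJ^{\perp}\mcI^{\perp}))^{\perp} = \mcJ^{\perp}\mcI^{\perp}$ (Theorem~\ref{Salce}) yields $(\mcI \diamond \mcJ)^{\perp} = \mcJ^{\perp}\mcI^{\perp}.$ The preenveloping half of the theorem is then obtained by repeating this reasoning verbatim in $(\mcA^{\op}; \mcE^{\op}).$

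I expect the work to lie not in this assembly, which is pure formalism once the dual statements are available, but in setting up the duality carefully. One must check that $(\mcA^{\op}; \mcE^{\op})$ is exact, that the category $\Arr (\mcA^{\op})$ equipped with its mono-epi structure is, as an exact category, the opposite of $(\Arr (\mcA); \ME)$ --- for which the self-dual form of Definition~\ref{mono epi def} and of the proof of Theorem~\ref{mono-epi} is the relevant point --- and that the dictionary $(\mcI\mcJ)^{\op} = \mcJ^{\op}\mcI^{\op},$ $(\mcI \diamond \mcJ)^{\op} = \mcJ^{\op} \diamond \mcI^{\op},$ $(\mcI^{\perp})^{\op} = {^{\perp}}(\mcI^{\op})$ holds, so that special precovering ideals and ``enough injectives'' on one side correspond to special preenveloping ideals and ``enough projectives'' on the other. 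The delicate point is that \emph{both} the product and the extension operation reverse their arguments under $(-)^{\op},$ so the dual of each formula must be transcribed with some care; I would therefore write out this dictionary explicitly before invoking it. As a by-product, this route delivers the closure of special precovering ideals under $\diamond$ with no direct construction of a special $\mcI \diamond \mcJ$-precover, which would be considerably more laborious.
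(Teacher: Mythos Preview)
Your proposal is correct and follows essentially the same approach as the paper: both assemble the theorem from Corollary~\ref{two ideals}, Corollary~\ref{ideal ghost lemma}, Salce's Lemma, and a duality argument in $(\mcA^{\op}; \mcE^{\op}).$ The only cosmetic difference is the order of presentation---the paper first deduces that special preenveloping ideals are closed under extensions (by writing $\mcK_1 \diamond \mcK_2 = (\mcI \mcJ)^{\perp}$ via Salce and Corollary~\ref{ideal ghost lemma}) and then dualizes, whereas you first dualize Corollaries~\ref{two ideals} and~\ref{ideal ghost lemma} and then feed them back through Salce's Lemma; the logical content is identical.
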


\begin{proof}
By Corollary~\ref{two ideals}, special precovering ideals are closed under products. The hypothesis allows us to invoke Salce's Lemma (Theorem~\ref{Salce}) to prove that special preenveloping ideals are closed under extensions: if $\mcK_1$ and $\mcK_2$ are special preenveloping ideals, then $\mcK_1 = \mcJ^{\perp}$ and $\mcK_2 = \mcI^{\perp}$ for some special precovering ideals $\mcJ$ and $\mcI.$ By Corollary~\ref{two ideals}, the product ideal $\mcI \mcJ$ is itself a special precovering ideal, so Salce's Lemma implies that $(\mcI \mcJ)^{\perp}$ is a special preenveloping ideal. By Corollary~\ref{ideal ghost lemma}, $(\mcI \mcJ)^{\perp} = \mcJ^{\perp} \diamond \mcI^{\perp} = \mcK_1 \diamond \mcK_2.$ Because the hypothesis of the theorem is self-dual, it follows that the special precovering ideals are closed under extensions, while the special preenveloping ones are closed under products. The first equation comes from Corollary~\ref{ideal ghost lemma}, while the second is nothing more that its dual. 
\end{proof}

\section{The Phantom Ideal}

The phantom ideal $\grF$ in $\RMod$ is an object-special precovering ideal, with a $\grF$-syzygy subcategory given by the category 
$\grO (\grF) = \RPinj$ of pure injective left $R$-modules~\cite[\S 6]{FGHT}. By Corollary~\ref{two ideals}, every finite power $\grF^n$ of the phantom ideal is itself an object-special precovering ideal, with a $\grF^n$-syzygy subcategory given by $\grO (\grF^n) = (\RPinj)^{\star n}.$ This is the additive category of modules $U$ that possess a filtration of length $n$ with pure injective factors. Proposition~\ref{ext by objects} then implies that $\Ob [(\grF^n)^{\perp}] = \add [(\RPinj)^{\star n} \star \RInj].$

Recall from the Introduction that a ring $R$ is semiprimary if the Jacobson radical $J = J(R)$ is nilpotent, and $R/J$ is semisimple artinian. The least number $n$ for which $J^n = 0$ is called the {\em nilpotency index} of $J.$

\begin{theorem} \label{semiprimary}
If $R$ is a semiprimary ring with $J^n = 0,$ then $\grF^n = \langle \RProj \rangle.$
\end{theorem}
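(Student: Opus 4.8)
The plan is to establish the two inclusions $\langle \RProj \rangle \subseteq \grF^n$ and $\grF^n \subseteq \langle \RProj \rangle$ separately. The first is elementary: a projective module $P$ is flat, so $\Tor_1^R(-,P) = 0$ and hence $1_P$ is a phantom morphism; thus $\RProj \subseteq \Ob (\grF)$ and $\langle \RProj \rangle \subseteq \grF$. Moreover $\langle \RProj \rangle$ is idempotent, since a composite of two morphisms each of which factors through a projective module again factors through a projective module (and $1_P = 1_P 1_P$). Therefore $\langle \RProj \rangle = \langle \RProj \rangle^n \subseteq \grF^n$.

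For the reverse inclusion, the point on which everything turns is that \emph{every} left $R$-module $M$ belongs to the $\grF^n$-syzygy subcategory $\grO (\grF^n) = (\RPinj)^{\star n}$, the additive category of modules admitting a filtration of length $n$ with pure injective factors. Indeed, since $J^n = 0$, the Loewy series $0 = J^nM \subseteq J^{n-1}M \subseteq \cdots \subseteq JM \subseteq M$ is a filtration of $M$ of length $n$ whose $i$-th factor $J^iM/J^{i+1}M$ is annihilated by $J$, hence is a semisimple module over the semisimple artinian ring $R/J$, and therefore pure injective over $R$ — here one invokes that, $R/J$ being a finite product of matrix rings over division rings, there are only finitely many simple $R$-modules, each of them endofinite (of finite length over its endomorphism division ring) and hence $\Sigma$-pure-injective, so that a semisimple $R$-module, being a direct sum of copies of these, is pure injective. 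Consequently $1_M \in \langle (\RPinj)^{\star n} \rangle = \gro (\grF^n) \subseteq (\grF^n)^{\perp}$ for every module $M$, and since $(\grF^n)^{\perp}$ is an ideal containing every identity morphism this forces $(\grF^n)^{\perp} = \Hom$.

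To convert this into the desired factorization, let $f : X \to Y$ be a morphism in $\grF^n$ and fix a short exact sequence $0 \to \grO (Y) \to P \to Y \to 0$ with $P$ free, representing a class $\eta \in \Ext^1_R (Y, \grO (Y))$. Since $f \in \grF^n$ and $1_{\grO (Y)} \in (\grF^n)^{\perp}$, the homomorphism $\Ext^1_R (Y, \grO (Y)) \to \Ext^1_R (X, \grO (Y))$ given by pullback along $f$ vanishes, so the pullback $f^{*}\eta$ splits; equivalently $f$ lifts through $P \to Y$ to a morphism $g : X \to P$ with the composite $X \to P \to Y$ equal to $f$. Hence $f$ factors through the projective module $P$, that is, $f \in \langle \RProj \rangle$. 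Combining this with the first paragraph yields $\grF^n = \langle \RProj \rangle$.

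The step requiring the most care is the assertion that semisimple $R$-modules are pure injective — the passage from the Loewy filtration of $M$ to membership in $\grO (\grF^n)$ — since this is the only point at which the structure of a semiprimary ring beyond $J^n = 0$ enters, and it rests on the theory of $\Sigma$-pure-injective (endofinite) modules rather than on the ideal-approximation machinery of the paper. Everything else is a routine application of facts already established: that $\grF^n$ is object-special precovering with $\grF^n$-syzygy subcategory $(\RPinj)^{\star n}$ (so $\gro (\grF^n) \subseteq (\grF^n)^{\perp}$), together with the definition of $\Ext$-orthogonality and the elementary behaviour of the object ideal $\langle \RProj \rangle$.
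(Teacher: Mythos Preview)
Your proof is correct and follows essentially the same approach as the paper: the Loewy series of an arbitrary module $M$ gives a length-$n$ filtration with semisimple (hence pure injective) factors, so $M \in (\RPinj)^{\star n} \subseteq \Ob[(\grF^n)^{\perp}]$, and from this $\grF^n = \langle \RProj \rangle$ follows. You spell out explicitly two things the paper leaves implicit --- the easy inclusion $\langle \RProj \rangle \subseteq \grF^n$ and the final $\Ext$-orthogonality argument that forces any $f \in \grF^n$ to lift through a free presentation of its codomain --- but the core idea is identical, and your justification of pure injectivity via endofiniteness of the simples is one of the two arguments the paper itself offers.
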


\begin{proof}
We will prove that $\Ob [(\grF^n)^{\perp}] = \RMod,$ by showing that every left $R$-module $M$ has a filtration of length $n$ whose factors are pure injective, and thus belongs to $(\RPinj)^{\star n}.$ The conclusion $\grF^n = \langle \RProj \rangle$ then follows. Indeed, consider the {\em Loewy series}
$$M \supseteq JM \supseteq J^2M \supseteq \cdots \supseteq J^{n-1}M \supseteq J^n M = 0.$$
Each of the factors is semisimple, and therefore pure injective. This follows from the observation that if $N$ is a semisimple $R$-module, then it may be considered as an $R/J$-module. As such, it is injective, and therefore pure injective. But the quotient map $R \to R/J$ is a ring epimorphism, so that the action of $R$ on $N$ yields a pure injective $R$-module, by~\cite[Thm 5.5.3]{P}. Another way to see that a semisimple module $M$ over a semiprimary ring is pure injective is to note that it is of finite endolength~\cite[Cor 4.4.24]{P}. 
\end{proof}

A ring $R$ is {\em  Quasi-Frobenius} (QF)~\cite{NY} if the category of projective left $R$-modules coincides with the category of left injective $R$-modules. It is well-known that this property is left-right symmetric and that every QF ring is semiprimary. If $R$ is a QF ring, then the stable category of $\RMod$ is obtained as the quotient category of $\RMod$ by the ideal $\langle \RProj \rangle$ of projective/injective modules. It is denoted by $R$-$\uMod$ and has the structure of a triangulated category. The phantom ideal $\grF$ in $\RMod$ contains $\langle \RProj \rangle$ and so induces an ideal, also denoted by $\grF,$ in the stable category. It is obvious that for a QF ring, the equation $\grF^n = \langle \RProj \rangle$ holds in the module category $\RMod$ if and only if $\grF^n = 0$ in the stable category. The following proposition characterizes the nilpotency index of the phantom ideal in the stable category of modules over a QF ring.

\begin{proposition} \label{QF index}
If $R$ is a QF ring, then $\grF^n = 0$ in the stable category $R$-$\uMod$ if and only if every left $R$-module is a direct summand of a module that possesses a filtration of length $n$ with pure injective factors.
\end{proposition}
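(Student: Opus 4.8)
The plan is to establish a chain of equivalences running from the stated filtration condition, through the perpendicular ideal $(\grF^n)^{\perp}$, to the vanishing of $\grF^n$ in the stable category. Throughout I use the facts recalled at the start of this section: $\grF^n$ is an object-special precovering ideal of $\RMod$ whose $\grF^n$-syzygy subcategory is $\grO(\grF^n) = (\RPinj)^{\star n}$, and consequently, by Proposition~\ref{ext by objects}, $\Ob[(\grF^n)^{\perp}] = \add[(\RPinj)^{\star n} \star \RInj]$.

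First I would simplify the object side, using that $R$ is QF, so $\RInj = \RProj$. A conflation with projective cokernel splits, whence every object of $(\RPinj)^{\star n} \star \RInj$ has the form $U \oplus E$ with $U \in (\RPinj)^{\star n}$ and $E$ injective; an injective module is pure injective, and a finite direct sum of conflations is again a conflation, so (for $n \ge 1$, the only relevant case) $U \oplus E$ still admits a filtration of length $n$ with pure injective factors. Hence $(\RPinj)^{\star n} \star \RInj = (\RPinj)^{\star n}$ and $\Ob[(\grF^n)^{\perp}] = \add[(\RPinj)^{\star n}]$. Since $(\RPinj)^{\star n}$ is closed under finite direct sums, a module lies in $\add[(\RPinj)^{\star n}]$ exactly when it is a direct summand of a module possessing a length-$n$ filtration with pure injective factors; so the right-hand condition of the proposition is precisely $\Ob[(\grF^n)^{\perp}] = \RMod$.

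Next I would pass to ideals. The identities $1_M$, $M \in \RMod$, generate $\Hom$, so $\Ob[(\grF^n)^{\perp}] = \RMod$ is equivalent to $(\grF^n)^{\perp} = \Hom$. I would then invoke the QF-specific identifications, valid in $\RMod$, that $\langle\RProj\rangle$ equals both $\mcEinj = \Hom^{\perp}$ (the ideal of injective morphisms) and ${}^{\perp}\Hom$ (the ideal of projective morphisms): in a module category a morphism is injective, respectively projective, precisely when it factors through an injective, respectively projective, module, and over a QF ring these two classes of modules coincide. From $\langle\RProj\rangle = \Hom^{\perp}$ we get $\langle\RProj\rangle^{\perp} = (\Hom^{\perp})^{\perp} = \Hom$, so $\grF^n = \langle\RProj\rangle$ forces $(\grF^n)^{\perp} = \Hom$; conversely, if $(\grF^n)^{\perp} = \Hom$, then since $\grF^n$ is special precovering, Proposition~\ref{double dual} gives $\grF^n = {}^{\perp}((\grF^n)^{\perp}) = {}^{\perp}\Hom = \langle\RProj\rangle$. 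Finally, $\grF^n = \langle\RProj\rangle$ in $\RMod$ is equivalent to $\grF^n = 0$ in $R\text{-}\uMod$, since $\langle\RProj\rangle$ is the kernel of $\RMod \to R\text{-}\uMod$ and is contained in $\grF^n$ --- this is the observation made just before the statement of the proposition. Concatenating the equivalences finishes the argument.

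The one point that requires care rather than depth is the orthogonality bookkeeping $\langle\RProj\rangle = \mcEinj = {}^{\perp}\Hom$: one must keep the notions of injective morphism ($\psi \in \Hom^{\perp}$, i.e.\ $\Ext(-,\psi) = 0$) and projective morphism ($\phi \in {}^{\perp}\Hom$, i.e.\ $\Ext(\phi,-) = 0$) apart, check that each coincides with the property of factoring through an injective, respectively projective, module --- which uses that $\RMod$ has enough injectives and enough projectives --- and only then use $\RInj = \RProj$ to collapse them to $\langle\RProj\rangle$. Everything else is formal manipulation of the operators $(-)^{\perp}$ and ${}^{\perp}(-)$ and of the extension subcategory $(\RPinj)^{\star n}$ from the preceding sections.
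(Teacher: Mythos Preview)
Your proof is correct and follows essentially the same route as the paper's. Both arguments reduce to the identity $\Ob[(\grF^n)^{\perp}] = \add[(\RPinj)^{\star n}]$ by showing that $(\RPinj)^{\star n} \star \RInj = (\RPinj)^{\star n}$ via the splitting off of the injective (equivalently projective) top factor; the paper states the equivalence $\grF^n = \langle \RProj \rangle \iff \Ob[(\grF^n)^{\perp}] = \RMod$ tersely ``by the discussion above,'' whereas you spell it out through Proposition~\ref{double dual} and the identification $\langle \RProj \rangle = {^{\perp}}\Hom$, which is a welcome clarification rather than a different method.
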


\begin{proof}
The equation $\grF^n = 0$ holds in the stable category if and only if $\grF^n = \langle \RProj \rangle$ in the module category $\RMod,$ which is equivalent to $\RMod = \Ob [(\grF^n)^{\perp}] = \add [(\RPinj)^{\star n} \star \RInj],$ by the discussion above. It is enough therefore to show that for a QF ring, $(\RPinj)^{\star n} \star \RInj = (\RPinj)^{\star n}.$ But if a module $M$ belongs to $(\RPinj)^{\star n} \star \RInj$ then there is a filtration of $M$
$$M = M_0 \supseteq M_1 \supseteq \cdots \supseteq M_n \supseteq M_{n+1} = 0$$
of length $n+1,$ all of whose factors are pure injective, except the first $M_0/M_1,$ which is injective. The first factor $M_0/M_1$ is then projective, so that $M = M_0/M_1 \oplus M_1,$ and we obtain the filtration
$$M = M_0/M_1 \oplus M_1 \supseteq M_0/M_1 \oplus M_2 \supseteq \cdots \supseteq M_0/M_1 \oplus M_n \supseteq 0$$
of length $n,$ all of whose factors are pure injective.
\end{proof}

If $R$ is a QF ring, then the nilpotency index of the Jacobson radical is a strict upper bound for the nilpotency index of the phantom ideal in the stable module category.

\begin{theorem} \label{QF}
If $R$ is a QF ring with Jacobson radical $J,$ then $J^n = 0$ implies that $\grF^{n-1} = 0$ in the stable category $R$-$\uMod.$
\end{theorem}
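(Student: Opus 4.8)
The plan is to deduce the theorem from Proposition~\ref{QF index} with $n$ replaced by $n-1$: I will show that every left $R$-module $M$ possesses a filtration of length $n-1$ whose factors are pure injective, so that, a fortiori, $M$ is a direct summand of such a module. One may assume $J\neq 0$, so $n\geq 2$. The crux is to establish a decomposition
$$M = E\oplus M',\qquad E\in\RInj,\qquad J^{n-1}M' = 0,$$
the module-theoretic phenomenon mentioned in the Introduction. Granting it, the Loewy series $M'\supseteq JM'\supseteq\cdots\supseteq J^{n-1}M'=0$ is a filtration of $M'$ of length $n-1$ with semisimple factors, and these are pure injective by the argument used for Theorem~\ref{semiprimary}. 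The induced filtration
$$M = E\oplus M'\;\supseteq\;0\oplus JM'\;\supseteq\;0\oplus J^{2}M'\;\supseteq\;\cdots\;\supseteq\;0\oplus J^{n-1}M' = 0$$
then has length $n-1$; its top factor $E\oplus(M'/JM')$ is a finite direct sum of pure injective modules, hence pure injective, while the remaining factors are semisimple. Thus $M\in(\RPinj)^{\star(n-1)}$, and Proposition~\ref{QF index} yields $\grF^{n-1}=0$ in $R$-$\uMod.$

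To produce the decomposition, I would first note that the QF ring $R$ is left Noetherian, so its injective modules are closed under directed unions (Baer's criterion together with the finite presentation of each $R/\mathfrak a$, which makes $\Ext^{1}_{R}(R/\mathfrak a,-)$ commute with filtered colimits). By Zorn's Lemma, $M$ then has a maximal injective submodule $E$; it is a direct summand, $M=E\oplus M'$, and $M'$ has no nonzero injective submodule, since an injective $0\neq Q\subseteq M'$ would enlarge $E$ to the injective submodule $E\oplus Q$ of $M$. It remains to show $J^{n-1}M'=0$. If not, pick $0\neq x\in J^{n-1}M'$, write it as a finite sum $\sum_{k}a_{k}m_{k}$ with $a_{k}\in J^{n-1}$ and $m_{k}\in M'$, and choose $k_{0}$ with $a_{k_{0}}m_{k_{0}}\neq 0$; then the cyclic submodule $N:=Rm_{k_{0}}$ of $M'$ satisfies $a_{k_{0}}m_{k_{0}}\in J^{n-1}N\setminus\{0\}$. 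Writing $N\cong R/\mathfrak a$ and splitting $R=P\oplus R'$ into indecomposable projectives, I may take the summand $P$ so that $J^{n-1}P\not\subseteq\mathfrak a$. Because $R$ is QF, $P$ is indecomposable injective, hence uniform with simple essential socle $\soc(P)$; since $J\cdot J^{n-1}P=0$, the nonzero submodule $J^{n-1}P$ lies in $\soc(P)$ and therefore equals it. So $\soc(P)\not\subseteq\mathfrak a$, whence $\soc(P)\cap\mathfrak a=0$ by simplicity and $P\cap\mathfrak a=0$ by essentiality of the socle; thus the composite $P\hookrightarrow R\twoheadrightarrow R/\mathfrak a\cong N$ is injective, so $M'$ contains a copy of the injective module $P$, contradicting the maximality of $E$. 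Hence $J^{n-1}M'=0$, as required.

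I expect the only substantial point to be the Loewy-length bound $J^{n-1}M'=0$; the rest is bookkeeping with Loewy filtrations and with the stability of pure injectivity under finite direct sums. What drives this bound — and what separates QF rings from arbitrary semiprimary rings, for which the bound of Theorem~\ref{semiprimary} is attained — is that over a QF ring an indecomposable projective is injective with simple essential socle, so that a module of maximal Loewy length is forced to contain such a projective as a submodule.
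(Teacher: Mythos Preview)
Your proof is correct and follows the same overall strategy as the paper: decompose $M = E \oplus M'$ with $E$ injective and $M'$ having no nonzero injective summand (equivalently, no nonzero injective submodule, since injective submodules always split off), show $J^{n-1}M' = 0$, and feed the resulting Loewy filtration of length $n-1$ into Proposition~\ref{QF index}. The one substantive difference is in how the bound $J^{n-1}M' = 0$ is obtained. The paper observes that, since $E(M')$ is projective over a QF ring, the quotient map $E(M') \to \grO^{-1}(M')$ is a projective cover of the cosyzygy; hence $M'$ is small in $E(M')$, so $M' \subseteq JE(M')$ and $J^{n-1}M' = 0$. Your argument is more elementary and self-contained: if $J^{n-1}M' \neq 0$, you locate a cyclic submodule $N \cong R/\mathfrak a$ with $J^{n-1}N \neq 0$, then use the socle of an indecomposable projective-injective summand $P$ of $R$ to embed $P$ into $N \subseteq M'$, contradicting the choice of $M'$. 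Your route avoids appealing to the existence and properties of projective covers over perfect rings, trading that for a short explicit computation with socles; the paper's route is terser and more conceptual but leans on more background.
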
 

\begin{proof}
First note that if $J = 0,$ then the QF ring is semisimple artinian, and therefore that the stable category $R$-$\uMod$ is equivalent to the trivial category $\{ 0 \}.$ In that case $\grF^0 = \uHom = 0,$ so that $\grF^{n - 1} = 0.$ So suppose that $J$ is nonzero. By Proposition~\ref{QF index}, it suffices to verify that every left $R$-module $M$ has a filtration of length $n-1$ with pure injective factors. Over a QF ring, every left $R$-module $M$ admits a direct sum decomposition $M = E \oplus M'$ where $E$ is a projective/injective module and $M'$ has no projective/injective summands. The Loewy length of $M'$ is at most $n-1.$ For, the injective envelope of $M'$ is part of the short exact sequence
\DIAGV{80}
{0} \n {\ear} \n {M'} \n {\ear} \n {E(M')} \n {\Ear{p}} \n {\grO^{-1}(M')} \n {\ear} \n {0,}
\diag  
where the morphism $p : E(M') \to \grO^{-1}(M')$ is the projective cover of the cosyzygy of $M'.$ It follows that $M'$ is a small submodule of its injective envelope, and therefore, that $M' \subseteq JE(M'),$ and hence that $J^{n-1}M' = 0.$ Consider now the filtration
$M \supseteq JM' \supseteq J^2M' \supseteq \cdots \supseteq J^{n-1}M' = 0,$
of length at most $n-1.$ The first factor is the pure injective module $E \oplus M'/JM',$ while the others are semisimple.
\end{proof}

Because the nilpotency index of the Jacobson radical $J$ of a group algebra $k[G]$ is bounded by the $k$-dimension $|G|$ of the algebra, Theorem~\ref{QF} provides an upper bound on the nilpotency index of the stable phantom ideal that makes no reference to the ground field $k.$ 
This answers a question~\cite[Question 5.6.3]{BG1} of Benson and Gnacadja in the affirmative.

\begin{corollary} \label{group bound}
Let $G$ be a finite group and $k$ a field. If $\grF$ denotes the ideal of phantom morphisms in the stable category $k[G]$-$\uMod$ of modules over the group algebra $k[G],$ then $\grF^{|G| - 1} = 0.$
\end{corollary}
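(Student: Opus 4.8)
The plan is to derive the corollary directly from Theorem~\ref{QF}, the only extra input being a bound on the nilpotency index of the Jacobson radical of $k[G]$. The first step is to record that $k[G]$ is a QF ring: for a finite group $G$ and any field $k$, the group algebra is a Frobenius algebra, hence self-injective, and being a finite-dimensional (in particular Artinian) $k$-algebra it is QF in the sense of~\cite{NY}. I would also note that $\dim_k k[G] = |G|$, so that it remains only to bound the nilpotency index $n$ of $J = J(k[G])$ in terms of $|G|$.

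The second step is to show $n \le |G|$. Consider the descending chain of two-sided ideals
$$k[G] \supseteq J \supseteq J^2 \supseteq \cdots \supseteq J^n = 0.$$
For each $i$ with $0 \le i < n$ the inclusion $J^i \supseteq J^{i+1}$ is strict, since otherwise $J^i = J \cdot J^i$ and Nakayama's Lemma, applied to the finitely generated $k[G]$-module $J^i$, would force $J^i = 0$, contradicting the minimality of $n$. Thus the chain exhibits $n$ strict inclusions of finite-dimensional $k$-spaces descending from $\dim_k k[G] = |G|$ to $0$, whence $n \le |G|$.

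The third step is to invoke Theorem~\ref{QF}: since $k[G]$ is QF and $J^n = 0$, we get $\grF^{n-1} = 0$ in the stable category $k[G]$-$\uMod$. As the powers of $\grF$ form a descending chain and $|G| - 1 \ge n - 1$, it follows that $\grF^{|G|-1} \subseteq \grF^{n-1} = 0$. I do not expect a genuine obstacle here, since the substance lies entirely in Theorem~\ref{QF}; the only subtlety is the degenerate case $J = 0$ — equivalently, when the characteristic of $k$ does not divide $|G|$ and $k[G]$ is semisimple, so that the stable category is trivial — where $n = 1$ and Theorem~\ref{QF} reads $\grF^0 = \uHom = 0$, and the inclusion $\grF^{|G|-1} \subseteq \grF^0 = 0$ still yields the claim.
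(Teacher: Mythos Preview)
Your proof is correct and follows essentially the same route as the paper: bound the nilpotency index $n$ of $J(k[G])$ by $\dim_k k[G] = |G|$ and then apply Theorem~\ref{QF}. The paper's argument is a single sentence asserting this bound, whereas you supply the standard Nakayama-based justification and handle the degenerate $J=0$ case explicitly; there is no substantive difference in strategy.
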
 

The Jennings-Quillen Theorem~\cite[p.\ 87]{B book} may be used to obtain upper bounds for the nilpotency index of the Jacobson radical as in~\cite{CCM}.
For example, if $G$ is a regular $p$-group of rank $r,$ this provides a phantom number of $(p-1)r,$ but if $G$ is a cyclic $p$-group, then the nilpotency index of the Jacobson radical is $|G|,$ because the group algebra $k[G]$ is uniserial, and therefore of finite representation type. In the proof of Theorem~\ref{semiprimary}, the Jacobson radical $J$ may be replaced by any nilpotent ideal $K$ for which the ring $R/K$ is left pure semisimple, i.e., of left pure global dimension $0.$ If $G$ is a cyclic $p$-group, then $K = 0$ will work: the nilpotency index of $K$ is $1,$ so that the stable category $k[G]$-$\uMod$ is phantomless. \bigskip

A module $F$ belongs to the ideal $\grF$ provided that $\Tor_1^R (-,F) = 0$ or, equivalently, if it is flat. Denote by $\RFlat \subseteq \RMod$ the subcategory of flat modules. The object ideal $\langle \RFlat \rangle$ of morphisms that factor through a flat module is contained in $\grF$ and, because it is idempotent, we see that the filtration 
$$\Hom = \grF^0 \supseteq \grF \supseteq \grF^2 \supseteq \cdots \supseteq \grF^n \supseteq \cdots \supseteq \langle \RFlat \rangle$$
of finite powers of $\grF$ is bounded below by $\langle \RFlat \rangle.$ Recall that a module $C$ is {\em cotorsion} if $\Ext^1_R (F,C) = 0$ for every flat module $F,$ and that~\cite{BEE} every module $M$ has a flat cover \linebreak $f: F(M) \to M$ whose kernel, denoted by $\grO^{\flat}(M),$ is cotorsion. 
Denote by $\RCotor \subseteq \RMod$ the subcategory of cotorsion left $R$-modules.

\begin{proposition} \label{cotorsion syzygy}
If $\RCotor \subseteq \Ob [(\grF^n)^{\perp}],$ then $\grF^n = \langle \RFlat \rangle.$
\end{proposition}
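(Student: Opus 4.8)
The plan is to establish the nontrivial inclusion $\grF^n \subseteq \langle \RFlat \rangle$; the reverse inclusion $\langle \RFlat \rangle \subseteq \grF^n$ has already been recorded above, since $\langle \RFlat \rangle$ is idempotent and contained in $\grF.$ The whole argument is two applications of Proposition~\ref{double dual}: one to $\grF^n$ and one to $\langle \RFlat \rangle,$ glued together by the fact that left $\Ext$-perpendicular is inclusion-reversing.

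First I would recall that $\langle \RFlat \rangle$ is a special precovering ideal of $\RMod.$ Indeed, by the flat cover theorem of Bican, El Bashir and Enochs~\cite{BEE}, every left $R$-module $M$ admits a flat cover $p : F(M) \to M$ whose kernel $\grO^{\flat}(M)$ is cotorsion, and the conflation $0 \to \grO^{\flat}(M) \to F(M) \stackrel{p}{\to} M \to 0$ is a special $\langle \RFlat \rangle$-precover: it is the pushout of itself along $1_{\grO^{\flat}(M)},$ which lies in $\langle \RFlat \rangle^{\perp}$ precisely because $\grO^{\flat}(M)$ is cotorsion, i.e.\ $\Ext^1_R(F,\grO^{\flat}(M))=0$ for every flat $F.$ Moreover $\langle \RCotor \rangle$ is a $\langle \RFlat \rangle$-syzygy ideal: each generator $1_C$ with $C$ cotorsion lies in $\langle \RFlat \rangle^{\perp}$ for the same reason, so $\langle \RCotor \rangle \subseteq \langle \RFlat \rangle^{\perp},$ and $\langle \RCotor \rangle$ contains the $\langle \RFlat \rangle$-syzygy $1_{\grO^{\flat}(M)}$ of every module $M.$ Proposition~\ref{double dual} therefore yields ${}^{\perp}\langle \RCotor \rangle = \langle \RFlat \rangle.$

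For the second application, recall that $\grF^n$ is an object-special precovering ideal (Corollary~\ref{two ideals}), in particular special precovering, and that $(\grF^n)^{\perp}$ is a $\grF^n$-syzygy ideal — indeed the largest one. Hence Proposition~\ref{double dual} gives ${}^{\perp}\big((\grF^n)^{\perp}\big) = \grF^n.$ The hypothesis $\RCotor \subseteq \Ob[(\grF^n)^{\perp}]$ says exactly that $1_C \in (\grF^n)^{\perp}$ for every cotorsion $C,$ and since $(\grF^n)^{\perp}$ is an ideal this is equivalent to the ideal inclusion $\langle \RCotor \rangle \subseteq (\grF^n)^{\perp}.$ Applying the inclusion-reversing operator ${}^{\perp}(-)$ and the two computations above,
\[
\grF^n \;=\; {}^{\perp}\big((\grF^n)^{\perp}\big) \;\subseteq\; {}^{\perp}\langle \RCotor \rangle \;=\; \langle \RFlat \rangle ,
\]
and together with $\langle \RFlat \rangle \subseteq \grF^n$ this gives $\grF^n = \langle \RFlat \rangle.$

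The one point that deserves care is verifying that $\langle \RCotor \rangle$ genuinely qualifies as a $\langle \RFlat \rangle$-syzygy ideal: one needs both that it is contained in $\langle \RFlat \rangle^{\perp}$ (immediate from the definition of cotorsion) and that it contains a $\langle \RFlat \rangle$-syzygy of every module, which is exactly where the flat cover theorem~\cite{BEE} is used. Note that no "enough injective morphisms" hypothesis is required, since Proposition~\ref{double dual} needs only that the ideals in question be special precovering together with the existence of suitable syzygy ideals. Everything else is formal bookkeeping with the perpendicular operators.
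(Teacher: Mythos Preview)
Your argument is correct. The paper's proof is shorter and more direct, but rests on the same observation: it takes the flat cover $f : F(N) \to N$ of an arbitrary module $N,$ notes that the kernel $\grO^{\flat}(N)$ is cotorsion and hence, by hypothesis, belongs to $\Ob[(\grF^n)^{\perp}],$ and that $f \in \langle \RFlat \rangle \subseteq \grF^n.$ Thus $f$ itself is an object-special $\grF^n$-precover of $N,$ so every morphism in $\grF^n$ with codomain $N$ factors through the flat module $F(N)$ and lies in $\langle \RFlat \rangle.$ In other words, the paper lets the single flat-cover conflation do double duty as a special $\grF^n$-precover, rather than separately invoking Proposition~\ref{double dual} for $\langle \RFlat \rangle$ and for $\grF^n$ and then chasing an inclusion through ${}^{\perp}(-).$ Your route is a clean formal repackaging of the same content; the paper's route avoids the detour through ${}^{\perp}\langle \RCotor \rangle = \langle \RFlat \rangle$ by reading off the factorization directly.
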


\begin{proof}
Let $N$ be a module and consider the short exact sequence 
\DIAG
{0} \n {\ear} \n {\grO^{\flat} (N)} \n {\ear} \n {F(N)} \n {\Ear{f}} \n {N} \n {\ear} \n {0,} 
\diag
where $f : F(N) \to N$ is the flat cover of $N,$ and the module $\grO^{\flat} (N)$ is cotorsion. The hypothesis implies that 
$\grO^{\flat}(N) \in \Ob [(\grF^n)^{\perp}].$ Because the morphism $f$ belongs to $\grF^n,$ it is an object-special $\grF^n$-precover of $N:$
every morphism in $\grF^n$ with codomain $N$ factors through $F(N)$ and so belongs to $\langle \RFlat \rangle.$
\end{proof}

The remainder of this section is devoted to developing a criterion sufficient for the condition $\grF^{n+1} = \langle \RFlat \rangle$ to hold in $\RMod$ for a right coherent ring $R.$ 

\begin{lemma}
If the ring $R$ is right coherent, then $\add [(\RPinj)^{\star n}]$ is invariant under $\grO^{\flat}.$
\end{lemma}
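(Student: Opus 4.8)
The plan is to prove, by induction on $n$, that $\grO^\flat$ carries $\add[(\RPinj)^{\star n}]$ into itself; the case $n=0$ is trivial. Two reductions organise the argument. First, $\grO^\flat$ behaves well under finite direct sums and summands: a finite direct sum of flat covers is again a flat cover, so $\grO^\flat(M_1 \oplus M_2) \cong \grO^\flat(M_1) \oplus \grO^\flat(M_2)$, and hence if $M$ is a summand of $M'$ then $\grO^\flat(M)$ is a summand of $\grO^\flat(M')$. Thus it suffices to show $\grO^\flat(M) \in \add[(\RPinj)^{\star n}]$ when $M$ genuinely lies in $(\RPinj)^{\star n}$, i.e. carries a filtration $0 = M_0 \subseteq M_1 \subseteq \cdots \subseteq M_n = M$ with every $M_{i+1}/M_i$ pure injective. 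Second, every module in $(\RPinj)^{\star k}$ is cotorsion, since pure injective modules are cotorsion and cotorsion modules are closed under extensions and summands; this is what makes the horseshoe lemma available below, as $\grO^\flat$-syzygies and flat modules are then $\Ext^1$-orthogonal.

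For the inductive step ($n \geq 2$), split $M$ via the conflation $0 \to E \to M \to V \to 0$, where $E = M_1$ is pure injective and $V = M/M_1$ carries the induced filtration of length $n-1$, so $V \in (\RPinj)^{\star(n-1)}$. Because $E$ is cotorsion and $F(V)$ is flat, $\Ext^1_R(F(V),E) = 0$, so the flat cover $F(V) \to V$ lifts through $M \to V$; the horseshoe lemma then produces a flat precover $F(E) \oplus F(V) \to M$ whose kernel $K$ fits in $0 \to \grO^\flat(E) \to K \to \grO^\flat(V) \to 0$. Since the flat cover $F(M)$ is a direct summand of the precover with flat complement $G$, we get $K \cong \grO^\flat(M) \oplus G$ with $G$ flat. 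By the base case $\grO^\flat(E)$ is pure injective, and by the inductive hypothesis $\grO^\flat(V) \in \add[(\RPinj)^{\star(n-1)}]$; concatenating filtrations shows that any extension of a module in $\add[(\RPinj)^{\star(n-1)}]$ by a pure injective module lies in $\add[(\RPinj)^{\star n}]$, so $K \in \add[(\RPinj)^{\star n}]$, and therefore so does its summand $\grO^\flat(M)$. (The superfluous flat summand $G$ does no harm: it is absorbed into $K$ and then discarded when we pass to $\grO^\flat(M)$.)

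It remains to settle the base case: $\grO^\flat(E)$ is pure injective whenever $E$ is pure injective. First reduce to character modules: any pure injective $E$ is a direct summand of $E^{++} = (E^+)^+$, so $\grO^\flat(E)$ is a summand of $\grO^\flat((E^+)^+)$, and it is enough to treat $E = N^+ := \Hom_{\bbZ}(N, \bbQ/\bbZ)$ for a right $R$-module $N$. Choose an injective resolution $0 \to N \to J^0 \to J^1 \to \cdots$ of $N$ as a right module and apply the exact functor $(-)^+$ to obtain an exact sequence $\cdots \to (J^1)^+ \to (J^0)^+ \to N^+ \to 0$. Here the right coherence of $R$ enters: the character dual of an injective (indeed FP-injective) right $R$-module is a flat left $R$-module, so this is a flat resolution of $N^+$, and its first syzygy $(J^0/N)^+$ is a character module, hence pure injective. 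As in the inductive step, $F(N^+)$ is a direct summand of the flat precover $(J^0)^+ \to N^+$ with flat complement, so $\grO^\flat(N^+)$ is a direct summand of $(J^0/N)^+$ and therefore pure injective, which closes the induction.

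I expect the two points needing most care to be, first, the verification that over a right coherent ring the character dual of an injective right module is flat --- this is the single place the coherence hypothesis is genuinely used, and should be quoted from the theory of coherent rings --- and, second, the repeated reconciliation of the non-minimal flat (pre)covers produced by the horseshoe lemma and by the injective resolution with the minimal kernels $\grO^\flat(-)$, so that the spurious flat summands are consistently shown not to disturb membership in $\add[(\RPinj)^{\star n}]$.
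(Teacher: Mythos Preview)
Your argument is correct and follows essentially the same route as the paper: reduce to $(\RPinj)^{\star n}$ using the compatibility of $\grO^{\flat}$ with finite direct sums, then induct via the Horseshoe Lemma applied to a conflation with pure injective kernel term. The only real difference is that the paper cites Xu \cite[Lemma~3.2.4]{X} for the base case, whereas you supply a direct proof via character modules; your bookkeeping distinguishing $(\RPinj)^{\star n}$ from $\add[(\RPinj)^{\star n}]$ is actually a bit more careful than the paper's.
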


\begin{proof}
Let us prove that every module $U$ in $(\RPinj)^{\star n}$ has a flat syzygy, not necessarily $\grO^{\flat} (U),$ that also belongs to
$(\RPinj)^{\star n}.$ Because the flat cover of a finite direct sum of modules is the direct sum of the respective flat 
covers~\cite[\S 1.4]{X}, this will imply that if $M$ belongs to $\add [(\RPinj)^{\star n}],$ then so does the kernel $\grO^{\flat} (M)$ of its flat cover. The proof proceeds by induction on $n.$ The case $n=1$ is the statement that the flat syzygy of a pure injective left $R$-module is itself pure injective, a result proved by Xu~\cite[Lemma 3.2.4]{X}.

If $U \in (\RPinj)^{\star (n+1)},$ then there is a short exact sequence, shown at the bottom of the commutative diagram
\DIAGV{80}
{} \n {} \n {0} \n {} \n {0} \n {} \n {0} \n {} \n {} \nn
{} \n {} \n {\sar} \n {} \n {\sar} \n {} \n {\sar} \n {} \n {} \nn
{0} \n {\ear} \n {\grO^{\flat} (U_0)} \n {\ear} \n {K} \n {\ear} \n {\grO^{\flat} (U_n)} \n {\ear} \n {0} \nn
{} \n {} \n {\sar} \n {} \n {\sar} \n {} \n {\sar} \n {} \n {} \nn
{0} \n {\ear} \n {F(U_0)} \n {\ear} \n {F(U_0) \oplus F(U_n)} \n {\ear} \n {F(U_n)} \n {\ear} \n {0} \nn
{} \n {} \n {\sar} \n {} \n {\sar} \n {} \n {\sar} \n {} \n {} \nn
{0} \n {\ear} \n {U_0} \n {\ear} \n {U} \n {\ear} \n {U_n} \n {\ear} \n {0} \nn
{} \n {} \n {\sar} \n {} \n {\sar} \n {} \n {\sar} \n {} \n {} \nn
{} \n {} \n {0} \n {} \n {0} \n {} \n {0,} \n {} \n {} 
\diag
where $U_0$ is pure injective, $U_n$ belongs to $(\RPinj)^{\star n},$ and all the rows and columns are exact. The left and right columns are given by the flat covers of $U_0$ and $U_n,$ respectively. Because $U_0$ is pure injective, it is cotorsion, so that the flat cover of $U_n$ lifts to $U,$ which yields, as in the Horseshoe Lemma~\cite[Lemma 2.5.1]{B book}, a flat precover of $U$ in the middle column. By the case $n=1,$ the flat syzygy $\grO^{\flat} (U_0)$ is pure injective. By the induction hypothesis, the flat syzygy $\grO^{\flat} (U_n)$ belongs to $(\RPinj)^{\star n}.$ Therefore, 
$K$ belongs to $(\RPinj)^{\star (n+1)}.$ 
\end{proof}

\begin{theorem} \label{pi cores}
Suppose that $R$ is right coherent and let $C$ be a cotorsion left $R$-module with a coresolution
\DIAGV{60}
{0} \n {\ear} \n {C} \n {\Ear{c}} \n {I_0} \n {\ear} \n {I_1} \n {\ear} \n {\cdots} \n {\ear} \n {I_n} \n {\ear} \n {0}
\diag
in $\RMod$ with each $I_k$ pure injective. Then $C \in \add [(\RPinj)^{\star (n+1)}].$
\end{theorem}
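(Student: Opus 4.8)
The plan is to argue by induction on $n$, the length of the coresolution. The case $n=0$ is immediate, since a coresolution $0 \to C \to I_0 \to 0$ forces $C \isom I_0$, which is pure injective, so $C \in \RPinj \subseteq \add [(\RPinj)^{\star 1}]$.

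For $n \geq 1$, put $K := \coker (C \to I_0) = \imnospace (I_0 \to I_1)$, so that the given coresolution breaks into a short exact sequence $0 \to C \to I_0 \stackrel{p}{\to} K \to 0$ together with a coresolution $0 \to K \to I_1 \to \cdots \to I_n \to 0$ having the $n$ pure injective terms $I_1, \ldots, I_n$. The first point to settle is that $K$ is again cotorsion. This follows because the flat cotorsion pair $(\RFlat, \RCotor)$ is hereditary --- the class of flat modules is closed under kernels of epimorphisms by a $\Tor$ dimension shift --- so that applying $\Ext^{\bullet}_R(F,-)$ for a flat module $F$ to $0 \to C \to I_0 \to K \to 0$ and using $\Ext^1_R(F,I_0) = \Ext^2_R(F,C) = 0$ gives $\Ext^1_R(F,K)=0$. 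Hence the inductive hypothesis applies to $K$ and yields $K \in \add[(\RPinj)^{\star n}]$; the preceding Lemma then places the flat syzygy $\grO^{\flat}(K)$ in $\add[(\RPinj)^{\star n}]$ as well.

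The crux is to combine $0 \to C \to I_0 \stackrel{p}{\to} K \to 0$ with the flat cover $g : F(K) \to K$. Since $I_0$ is pure injective, hence cotorsion, and $F(K)$ is flat, the obstruction in $\Ext^1_R(F(K),C)$ to lifting $g$ through $p$ vanishes, so there is $\tilde g : F(K) \to I_0$ with $p\tilde g = g$. Form the pullback $P := I_0 \times_K F(K)$. The projection $P \to F(K)$ is split by $y \mapsto (\tilde g(y),y)$, whence $P \isom C \oplus F(K)$, while the projection $P \to I_0$ fits into a short exact sequence $0 \to \grO^{\flat}(K) \to C \oplus F(K) \to I_0 \to 0$. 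Thus $C \oplus F(K)$ is an extension of $I_0 \in \RPinj$ by a module in $\add[(\RPinj)^{\star n}]$, so it is a summand of a module in $(\RPinj)^{\star(n+1)}$ (using associativity of $\star$ together with the elementary fact that if $X$ is a summand of a module in $(\RPinj)^{\star n}$ and $0 \to X \to Z \to P \to 0$ with $P$ pure injective, then $Z$ is a summand of a module in $(\RPinj)^{\star(n+1)}$). Since $\add[(\RPinj)^{\star(n+1)}]$ is closed under direct summands, $C \in \add[(\RPinj)^{\star(n+1)}]$, completing the induction.

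The main obstacle --- and essentially the only new idea --- is the pullback-against-the-flat-cover step: one must exploit the cotorsion-ness of $I_0$ to lift the flat cover, so that the pullback splits off $C$ and the filtration of $\grO^{\flat}(K)$ supplied by the Lemma propagates to $C$ modulo a flat direct summand that the additive closure absorbs. The supporting points --- heredity of the flat cotorsion pair (used to truncate the coresolution while keeping cotorsion-ness) and the bookkeeping inclusion $\add[\mcX^{\star n}] \star \mcX \subseteq \add[\mcX^{\star(n+1)}]$ --- are routine.
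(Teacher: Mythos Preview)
Your proof is correct and follows essentially the same route as the paper's: truncate the coresolution, show the cokernel is cotorsion, apply induction and the preceding Lemma to it and its flat syzygy, then pull back the flat-cover sequence against the truncated sequence to exhibit $C$ as a summand of an extension of $I_0$ by $\grO^{\flat}(K)$. One small slip: the lift of the flat cover through $p$ exists because $C$ (the kernel of $p$) is cotorsion, not because $I_0$ is --- the obstruction you correctly locate in $\Ext^1_R(F(K),C)$ vanishes for that reason, and indeed the paper invokes exactly this.
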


\begin{proof}
The proof proceeds by induction on $n.$ The case $n=0$ is a tautology. To prove the induction step, consider the commutative diagram
\DIAGV{80}
{} \n {} \n {} \n {} \n {0} \n {} \n {0} \n {} \n {} \nn
{} \n {} \n {} \n {} \n {\sar} \n {} \n {\sar} \n {} \n {} \nn
{} \n {} \n {} \n {} \n {\grO^{\flat}(C')} \n {\eeql} \n {\grO^{\flat}(C')} \n {} \n {} \nn
{} \n {} \n {} \n {} \n {\sar} \n {} \n {\sar} \n {} \n {} \nn
{0} \n {\ear} \n {C} \n {\ear} \n {U} \n {\ear} \n {F(C')} \n {\ear} \n {0} \nn
{} \n {} \n {\seql} \n {} \n {\sar} \n {} \n {\sar} \n {} \n {} \nn
{0} \n {\ear} \n {C} \n {\Ear{c}} \n {I_0} \n {\ear} \n {C'} \n {\ear} \n {0} \nn
{} \n {} \n {} \n {} \n {\sar} \n {} \n {\sar} \n {} \n {} \nn
{} \n {} \n {} \n {} \n {0} \n {} \n {0,} \n {} \n {} 
\diag 
where the the bottom row is the short exact sequence that begins the given coresolution, and the rest of the diagram is obtained by pullback along the cokernel of $c$ and the flat cover of $C'.$ Both $C$ and $I_0$ are cotorsion, so that $C'$ is also a cotorsion module with a coresolution by pure injective modules of properly shorter length. The induction hypothesis therefore applies and we may assume that $C'$ belongs to 
$\add [(\RPinj)^{\star n}].$ By the previous lemma, so does the flat syzygy $\grO^{\flat} (C').$ Because $C$ is cotorsion, the flat cover of $C'$ 
lifts to $I_0$ and causes the middle row of the diagram to split. It follows that $C$ is a direct summand of $U.$ 

Now $\grO^{\flat} (C')$ belongs to $\add [(\RPinj)^{\star n}],$ so there exists a module $K$ such that 
$\grO^{\flat}(C') \oplus K \in (\RPinj)^{\star n}.$ The module $U \oplus K$ is an extension of the pure injective module $I_0$ by 
$\grO^{\flat}(C') \oplus K,$ so that $U \oplus K$ belongs to $(\RPinj)^{\star (n+1)}.$ Consequently, $C \in \add [(\RPinj)^{\star (n+1)}].$
\end{proof}

Theorem~\ref{pi cores} and Proposition~\ref{cotorsion syzygy} imply the following.

\begin{corollary} \label{Benson-G} 
Let $R$ be a right coherent ring such that every cotorsion left $R$-module $C$ has a coresolution
\DIAGV{60}
{0} \n {\ear} \n {C} \n {\ear} \n {I_0} \n {\ear} \n {I_1} \n {\ear} \n {\cdots} \n {\ear} \n {I_n} \n {\ear} \n {0}
\diag
with each $I_k$ pure injective. Then $\grF^{n+1} = \langle \RFlat \rangle.$
\end{corollary}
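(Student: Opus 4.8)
The plan is to deduce this corollary directly from the two results cited just above it, namely Theorem~\ref{pi cores} and Proposition~\ref{cotorsion syzygy}, together with the description of $\Ob[(\grF^{n+1})^{\perp}]$ already recorded at the start of this section. Recall that $\grF$ is object-special precovering with $\grF$-syzygy subcategory $\grO(\grF) = \RPinj$, so by Corollary~\ref{two ideals} each power $\grF^{n+1}$ is again object-special precovering, with $\grF^{n+1}$-syzygy subcategory $\grO(\grF^{n+1}) = (\RPinj)^{\star(n+1)}$, and hence (by Proposition~\ref{ext by objects}, since $\RMod$ has enough injective objects) $\Ob[(\grF^{n+1})^{\perp}] = \add[(\RPinj)^{\star(n+1)} \star \RInj]$. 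This identity does the heavy lifting on the ``target'' side.

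The substantive step I would carry out is the inclusion $\RCotor \subseteq \Ob[(\grF^{n+1})^{\perp}]$. Let $C$ be a cotorsion left $R$-module. By hypothesis $C$ has a coresolution of length $n$ by pure injective modules, so Theorem~\ref{pi cores} places $C$ in $\add[(\RPinj)^{\star(n+1)}]$. Since the zero module is injective, every $U \in (\RPinj)^{\star(n+1)}$ can be written as the trivial extension $U = U \star 0$, whence $(\RPinj)^{\star(n+1)} \subseteq (\RPinj)^{\star(n+1)} \star \RInj$; passing to additive closures gives $\add[(\RPinj)^{\star(n+1)}] \subseteq \add[(\RPinj)^{\star(n+1)} \star \RInj] = \Ob[(\grF^{n+1})^{\perp}]$. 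Therefore $C \in \Ob[(\grF^{n+1})^{\perp}]$, and since $C$ was an arbitrary cotorsion module, $\RCotor \subseteq \Ob[(\grF^{n+1})^{\perp}]$. Finally I would invoke Proposition~\ref{cotorsion syzygy}, applied with $n+1$ in place of $n$: the inclusion just established is exactly its hypothesis, so its conclusion yields $\grF^{n+1} = \langle \RFlat \rangle$.

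I do not expect any real obstacle in the corollary itself — it is a matter of assembling the pieces — because the genuine difficulty lives one level down, in Theorem~\ref{pi cores} and the lemma preceding it (the induction on the length of the coresolution, splicing flat covers via the Horseshoe Lemma, using that pure injective modules are cotorsion so that flat covers lift and split off, and the stability of $\add[(\RPinj)^{\star n}]$ under $\grO^{\flat}$). The only point requiring care in the assembly is the bookkeeping on the exponent: the hypothesis concerns a coresolution of length $n$, but the conclusion, and the instance of Proposition~\ref{cotorsion syzygy} to be used, concern $\grF^{n+1}$, so one must make sure the off-by-one shift coming out of Theorem~\ref{pi cores} is matched correctly against the statement of Proposition~\ref{cotorsion syzygy}.
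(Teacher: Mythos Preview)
Your proposal is correct and follows exactly the approach the paper takes: the paper's proof is simply the one-line remark that Theorem~\ref{pi cores} and Proposition~\ref{cotorsion syzygy} imply the result, and you have faithfully unpacked that implication. One minor simplification: you do not need the trivial extension $U = U \star 0$ to get $\add[(\RPinj)^{\star(n+1)}] \subseteq \Ob[(\grF^{n+1})^{\perp}]$, since $(\RPinj)^{\star(n+1)}$ is a $\grF^{n+1}$-syzygy subcategory and hence already sits inside $\Ob[(\grF^{n+1})^{\perp}]$ by the very definition of a syzygy ideal.
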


A ring $R$ is said to be of left {\em pure global dimension} at most $n$ if every left $R$-module has a pure injective coresolution of length at most $n.$ Such a ring clearly satisfies the hypothesis of Corollary~\ref{Benson-G}. This yields the following generalization of a result of Benson and Gnacadja~\cite{BG1}, which asserts that for a group algebra $k[G]$ of pure global dimension $n$ the finite power $\grF^{n+1}$ of the phantom ideal is the object ideal of morphisms that factor through a projective left $R$-module.

\begin{corollary}
If $R$ is a right coherent ring of left pure global dimension at most $n,$ then $\grF^{n+1} = \langle \RFlat \rangle.$ 
\end{corollary}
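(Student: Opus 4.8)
The plan is to read this off directly from Corollary~\ref{Benson-G}: the statement is exactly the special case in which the coherence-plus-coresolution hypothesis of that corollary is supplied for free by the pure global dimension bound. So the first step is to unwind the definition. To say that $R$ has left pure global dimension at most $n$ is to say that every left $R$-module $M$ admits a pure-injective coresolution $0 \to M \to I_0 \to I_1 \to \cdots \to I_m \to 0$ of length $m \le n$, with each $I_k$ pure injective. (In the standard formulation such a coresolution is even pure-exact, being assembled from successive pure-injective envelopes, but only exactness in $\RMod$ is used by Corollary~\ref{Benson-G} --- via Theorem~\ref{pi cores} --- so this extra strength is irrelevant.) In particular this applies to every cotorsion left $R$-module $C$.

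The second step is a harmless normalisation of length, since Corollary~\ref{Benson-G} is phrased with a coresolution of length exactly $n$ while the dimension bound only delivers length $\le n$. Given a pure-injective coresolution $0 \to C \to I_0 \to \cdots \to I_m \to 0$ of a cotorsion module $C$ with $m < n$, I would append $n-m$ copies of the zero module at the right-hand end. The resulting sequence $0 \to C \to I_0 \to \cdots \to I_m \to 0 \to \cdots \to 0 \to 0$ is still exact (the appended terms extend it exactly for trivial reasons), and all of its terms remain pure injective, since the zero object is injective, hence pure injective, in any exact structure. Thus every cotorsion left $R$-module has a pure-injective coresolution of length exactly $n$; this is precisely the hypothesis of Corollary~\ref{Benson-G}, whose conclusion $\grF^{n+1} = \langle \RFlat \rangle$ then gives the result.

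There is no real obstacle here: all the substance --- Theorem~\ref{pi cores}, the invariance of $\add [(\RPinj)^{\star n}]$ under $\grO^{\flat}$, and Proposition~\ref{cotorsion syzygy} together with the description $\grO (\grF^n) = (\RPinj)^{\star n}$ --- has already been carried out, and what remains is only the trivial padding argument and the routine translation between ``pure global dimension $\le n$'' and ``bounded pure-injective coresolutions for all modules''. I would close with the remark that, over a group algebra $k[G]$ with $G$ finite, $k[G]$ is Artinian, so that $\RFlat = \RProj$ and $\langle \RFlat \rangle = \langle \RProj \rangle$; hence this corollary recovers the bound $\grF^{n+1} = \langle \RProj \rangle$ of Benson and Gnacadja for $k[G]$ of pure global dimension $n$, and extends it to arbitrary right coherent rings of bounded left pure global dimension.
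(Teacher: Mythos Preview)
Your proposal is correct and follows essentially the same approach as the paper: the paper simply observes that a ring of left pure global dimension at most $n$ ``clearly satisfies the hypothesis of Corollary~\ref{Benson-G},'' and your argument is just a careful unpacking of that sentence (with the harmless padding step made explicit). Your closing remark about group algebras also matches the paper's own commentary preceding the corollary.
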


Similarly, if every left $R$-module has an injective coresolution of length at most $n,$ then the hypothesis of Corollary~\ref{Benson-G} is satisfied.

\begin{corollary}
If $R$ is a right coherent ring of homological dimension at most $n,$ then $\grF^{n+1} = \langle \RFlat \rangle.$ 
\end{corollary}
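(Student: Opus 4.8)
The plan is to deduce the statement directly from Corollary~\ref{Benson-G}. First I would recall that the assertion ``$R$ has homological dimension at most $n$'' is, by definition, the statement that every left $R$-module admits an injective coresolution $0 \to M \to E_0 \to E_1 \to \cdots \to E_n \to 0$ of length at most $n$; should such a coresolution terminate before stage $n$, one extends it by adjoining copies of the zero module on the right, noting that the zero module is (vacuously) pure injective, to arrive at one of length exactly $n$ having the shape that appears in the statement of Corollary~\ref{Benson-G}.

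The second step is the observation that every injective left $R$-module is pure injective: an injective module is a direct summand of any module containing it, hence in particular of any module in which it sits as a pure submodule, which is the defining property of pure injectivity. Consequently the injective coresolution $0 \to C \to E_0 \to \cdots \to E_n \to 0$ of a cotorsion left $R$-module $C$ is a coresolution of $C$ by pure injective modules, of length $n$.

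Putting the two steps together, every cotorsion left $R$-module has a coresolution of length $n$ by pure injective modules; this is precisely the hypothesis of Corollary~\ref{Benson-G}, so that corollary yields $\grF^{n+1} = \langle \RFlat \rangle$, as desired. I do not anticipate any genuine obstacle here: the argument is nothing more than the remark preceding Corollary~\ref{Benson-G} made precise, and the only point requiring a moment's care is the bookkeeping of lengths, namely the padding of a short injective coresolution by zero modules so that its length matches the format demanded in Corollary~\ref{Benson-G}.
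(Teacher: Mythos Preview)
Your proposal is correct and follows exactly the approach indicated in the paper: the sentence immediately preceding the corollary states that if every left $R$-module has an injective coresolution of length at most $n$, then the hypothesis of Corollary~\ref{Benson-G} is satisfied, and your argument simply spells this out (injective implies pure injective, pad short coresolutions with zeros, apply Corollary~\ref{Benson-G}).
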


A ring $R$ is of {\em flat global dimension} at most $n$ if every left $R$-module has a flat resolution of length at most $n.$ Then every cotorsion left $R$-module has injective dimension at most $n,$ so that the hypothesis of Theorem~\ref{pi cores} is satisfied and $\grF^{n+1} = \langle \RFlat \rangle.$ To see why every left cotorsion module $C$ has injective dimension at most $n,$ consider a flat resolution of $F_{\ast} \to M$
\DIAGV{80}
{0} \n {\ear} \n {F_n} \n {\ear} \n {\cdots} \n {\ear} \n {F_1} \n {\ear} \n {F_0} \n {\ear} \n {M} \n {\ear} \n {0} 
\diag 
of length $n,$ of an arbitrary left $R$-module $M.$ This resolution is $\Hom (-,C)$-acyclic~\cite[Prop III.1.2A]{Hart} so that $\Ext^k(M,C)$ is given by the homology of $\Hom (F_{\ast}, C)$ at $\Hom (F_k,C).$ In particular, $\Ext^{n+1}(M,C) = 0.$ Since this is true for every left $R$-module $M,$ it follows from standard homological arguments that $C$ has a coresolution by injective modules of length at most $n,$ and the hypothesis of Corollary~\ref{Benson-G} is again satified.

\begin{corollary}
If $R$ is a right coherent ring of flat global dimension at most $n,$ then $\grF^{n+1} = \langle \RFlat \rangle.$ 
\end{corollary}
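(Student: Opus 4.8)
The plan is to deduce this from Corollary~\ref{Benson-G}. Since every injective left $R$-module is pure injective, it suffices to verify the hypothesis of that corollary: namely, that every cotorsion left $R$-module $C$ has a coresolution $0 \to C \to I_0 \to \cdots \to I_n \to 0$ by pure injectives of length at most $n$. For this it is enough to show that, under the assumption on $R$, every cotorsion left $R$-module has injective dimension at most $n$, and then take an injective coresolution of that length.

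First I would record the homological fact that if $F$ is flat and $C$ is cotorsion, then $\Ext_R^i(F,C) = 0$ for all $i \geq 1$, and not merely for $i = 1$. The essential point is that $\RFlat$ is closed under kernels of epimorphisms: given a short exact sequence $0 \to F' \to F \to F'' \to 0$ with $F$ and $F''$ flat, the long exact sequence in $\Tor_\ast^R(N,-)$ for an arbitrary right $R$-module $N$, together with $\Tor_1^R(N,F) = 0$ and $\Tor_2^R(N,F'') = 0$, forces $\Tor_1^R(N,F') = 0$, so $F'$ is flat. Consequently the $(i-1)$-st syzygy $\grO^{i-1}(F)$ in a projective resolution of $F$ is flat, and dimension shifting gives $\Ext_R^i(F,C) \cong \Ext_R^1(\grO^{i-1}(F),C) = 0$ for every $i \geq 1$.

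Next I would take an arbitrary left $R$-module $M$ and, using the hypothesis that $R$ has flat global dimension at most $n$, a flat resolution $0 \to F_n \to \cdots \to F_1 \to F_0 \to M \to 0$. By the previous step each $F_k$ is acyclic for the functor $\Hom_R(-,C)$, so this resolution computes $\Ext_R^\ast(M,C)$ by \cite[Prop III.1.2A]{Hart}; in particular $\Ext_R^{n+1}(M,C) = 0$. As $M$ ranges over all left $R$-modules, a standard dimension-shifting argument shows that an $n$-th cosyzygy of $C$ in any injective coresolution is injective, so $C$ has injective dimension at most $n$. Now Corollary~\ref{Benson-G} applies, with the pure injective coresolution taken to be an injective one, and yields $\grF^{n+1} = \langle \RFlat \rangle$.

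The only step demanding any attention is the vanishing of the higher groups $\Ext_R^{\geq 2}(F,C)$, handled in the second paragraph above; the remainder of the argument is formal. Note that right coherence of $R$ plays no role in the reduction carried out here — it enters only through Corollary~\ref{Benson-G} itself, via the invariance of $\add[(\RPinj)^{\star n}]$ under the flat syzygy operator $\grO^{\flat}$.
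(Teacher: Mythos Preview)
Your proof is correct and follows essentially the same route as the paper: show that every cotorsion module has injective dimension at most $n$ by computing $\Ext^{\ast}(M,C)$ from a flat resolution of $M$ of length $n$ via~\cite[Prop III.1.2A]{Hart}, then invoke Corollary~\ref{Benson-G}. You are in fact more careful than the paper in justifying why the flat resolution is $\Hom(-,C)$-acyclic, supplying the syzygy argument for $\Ext_R^{\geq 2}(F,C)=0$ that the paper leaves implicit.
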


For example, if a ring $R$ is right semihereditary, then it is right coherent and of flat global dimension at most $1$ so that 
$\grF^2 = \langle \RFlat \rangle.$

\section{Wakamatsu's Lemma}

The Ghost Lemma (Theorem~\ref{the ghost lemma}) implies that in the bijective correspondence $\mcI \mapsto \mcI^{\perp}$ given by Salce's Lemma (Theorem~\ref{Salce}), as well as its inverse $\mcK \mapsto {^{\perp}}\mcK,$ idempotent ideals $\mcI^2 = \mcI$ correspond to ideals closed under extensions $\mcK \diamond \mcK = \mcK.$ These two properties of an ideal are familiar from the classical theory, because is $(\mcF, \mcC)$ is a complete cotorsion pair, then both ideals in the complete ideal cotorsion pair $(\langle \mcF \rangle, \langle \mcC \rangle)$ (see~\cite[Thm 28]{FGHT}) are idempotent and closed under extensions. They are idempotent, because they are object ideals; they are closed under extensions, because the underlying subcategories of objects are. In this section, we take up the study of ideals having these properties, but not with the usual assumption that they be special precovering, but, rather, covering. None of the results in this section require the existence of enough injective or projective morphisms. \bigskip  

Let $\mcI$ be an ideal of an exact category $(\mcA; \mcE)$ and $A$ an object of $\mcA.$ An $\mcI$-precover $i : C \to A$ of $A$ is an $\mcI${\em -cover} if every endomorphism $f : C \to C$ that makes the diagram
\DIAGV{80}
{} \n {} \n {C} \nn
{} \n {\Swar{f}} \n {\saR{i}} \nn
{C} \n {\Ear{i}} \n {A} 
\diag 
commute is an automorphism. Recall that an $\mcI$-precover is necessarily a deflation. The notion of an $\mcI${\em -envelope} is defined dually. In what follows, we state the results in terms of covers, rather than envelopes, leaving the formulations and proofs of the dual results to the interested reader.

If $i : C \to A$ is an $\mcI$-cover and $i' : C' \to A$ is an $\mcI$-precover, then there is a morphism from $c : C \to C'$ over $A$ that induces a morphism $k$ on the kernels, and therefore an ME-conflation of arrows
\DIAGV{80}
{\Xi:} \n {K} \n {\ear} \n {C} \n {\Ear{i}} \n  {A} \nn
{\Sar{\xi}} \n {\saR{k}} \n {} \n {\saR{c}} \n {} \n  {\seql} \nn
{\Xi':} \n {K'} \n {\ear} \n {C'} \n {\Ear{i'}} \n  {A.} 
\diag 
The condition that $i$ be an $\mcI$-cover implies the existence of a retraction $\sigma : \Xi' \to \Xi$ of $\xi,$ which implies that the conflation $\Xi$ is a direct summand of $\Xi'.$ In particular, both of the morphisms $c : C \to C'$ and $k : K \to K'$ have retractions. This will be used in the proof of following lemma, which is the main result of this section.

\begin{lemma} \label{Wakamatsu}
Let $\mcI$ be an ideal, closed under extensions, in an exact category $(\mcA; \mcE).$ If $A \in \mcA$ and $i : C \to A$ is the $\mcI$-cover of $A,$ then the kernel $K$ of $i$ belongs to $\Ob (\mcI^{\perp}).$ 
\end{lemma}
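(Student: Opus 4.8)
The plan is to show directly that every $j \in \mcI$ is $\Ext$-orthogonal to $K$, i.e.\ that every ME-conflation $\xi : K \to a \to j$ is null-homotopic. By Lemma~\ref{ext crit} applied with one of the ideals being the trivial ideal $0$ (so that $j = j \star 0$ is an extension and $a = K \star 1_{J_1}$ in the relevant sense), such a conflation amounts to a commutative diagram with an exact middle row $\Xi : K \to a \to j$; more concretely, for a morphism $j : J_0 \to J_1$ in $\mcI$ and a conflation $K \to P \to J_0$ in $(\mcA; \mcE)$, the pushout of that conflation along an arbitrary inflation combines with $j$ to produce the general ME-extension. The key point is that a null-homotopy splits the inflation $K \to P$ modulo the $\mcI$-part, so what I really need is: the conflation $\Xi : K \to P \to J_0$ obtained as the $\mcI$-syzygy configuration of $\xi$ becomes split after we use the covering hypothesis and the closure of $\mcI$ under extensions.

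First I would set up the relevant diagram. Given $j : J_0 \to J_1$ in $\mcI$ and an ME-conflation $\xi : K \to a \to j$, write its mono-epi factorization as in Definition~\ref{mono epi def}, producing a conflation $\Xi : K \xrightarrow{m} P \xrightarrow{e} J_1$ in $(\mcA; \mcE)$ together with maps $a^1 : J_0 \to P$ and $a^2 : P \to ?$; using the identification of $K$ with the kernel of the $\mcI$-cover $i : C \to A$, form the pushout of $\Xi$ along the inflation $K \to C$,
\DIAGV{80}
{\Xi:} \n {K} \n {\Ear{m}} \n {P} \n {\Ear{e}} \n {J_1} \nn
{} \n {\saR{k}} \n {} \n {\saR{c'}} \n {} \n {\seql} \nn
{\Xi'':} \n {C} \n {\ear} \n {Q} \n {\ear} \n {J_1.}
\diag
Because $\Xi''$ begins with the inflation $K \to C$ and the cokernel $A = C/K$ fits into $0 \to K \to C \to A \to 0$, the object $Q$ receives a map from $C$ whose composite with $C \to A$ exhibits $Q$ as an extension: there is a conflation $C \to Q \to J_1$, hence a conflation $A \to (Q/K) \to J_1$ whose outer terms relate $A$ to $J_1$. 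The essential observation is that the middle term $Q/K$ now carries a natural deflation to $A$ (push $Q \to A$ through), and its kernel is an extension of $J_1 \in \Ob(\mcI) \subseteq$ (well, $J_1$ need not be in $\Ob(\mcI)$, but the relevant arrow $1_{J_1}$ paired with the structure makes the induced arrow lie in $\mcI$ by the extension-closure hypothesis).

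The heart of the argument is then: the induced map $d : Q/K \to A$ is an $\mcI$-precover — indeed the arrow from $C$ to $Q/K$ over $A$ is an ME-inflation with cokernel the arrow $0 \to J_1$, which represents an extension of $1_{J_1}$-type data by $\mcI$-data, so since $\mcI$ is closed under extensions the composite $d \in \mcI$ and it remains a deflation; moreover any $\mcI$-morphism to $A$ factors through the original cover $i$, hence through $d$. Now invoke that $i : C \to A$ is an $\mcI$-\emph{cover}: the factorization $i = d \circ (C \to Q/K)$ together with a factorization of $d$ back through $i$ produces an endomorphism of $C$ over $A$, which must be an automorphism, forcing the inflation $C \to Q/K$ (and correspondingly the pushout configuration) to split off. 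Tracing the splitting back along the pushout square shows that $\Xi : K \to P \to J_1$ was already split, which is exactly the statement that $\xi$ is null-homotopic. Since $j \in \mcI$ was arbitrary, $\Ext(j, 1_K)$-type vanishing holds for all $j \in \mcI$, i.e.\ $1_K \in \mcI^{\perp}$, so $K \in \Ob(\mcI^{\perp})$.

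The main obstacle I anticipate is the bookkeeping in the second paragraph: correctly identifying, via the pushout along $K \to C$ and the cokernel $A$, an honest $\mcI$-precover $d : Q/K \to A$ whose kernel is an $\mcI \diamond (\text{object ideal})$-type extension, so that the hypothesis $\mcI \diamond \mcI = \mcI$ (closure under extensions) genuinely applies — this requires that the "new part" $J_1$ contributes an arrow that itself lies in $\mcI$, which is where one must be careful: it is the \emph{arrow} $c' : P \to Q$ (equivalently the relevant square), not the object $J_1$, that must be recognized as an element of $\mcI$, using that $j \in \mcI$ and $\mcI$ is an ideal closed under extensions. Once that identification is clean, the rest is the standard covering-implies-summand trick already recalled in the paragraph preceding the lemma statement (the retraction $\sigma : \Xi' \to \Xi$ of $\xi$), applied to the precover $d$ against the cover $i$.
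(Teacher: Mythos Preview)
Your overall strategy is the paper's: given an ME-conflation $\xi : 1_K \to a \to j$ with $j : J_0 \to J_1$ in $\mcI$, push out along the conflation $K \to C \to A$, use closure of $\mcI$ under extensions to see that the resulting deflation onto $A$ lies in $\mcI$, and then invoke the cover property of $i$ to produce a retraction of the inflation $m_0 : K \to A_0$. The outline of your last paragraph (factor through the cover, get an automorphism of $C$ over $A$, read off the splitting) is exactly the endgame. But the middle of your argument does not go through as written.

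The concrete error is the object $Q/K$. You push out $\Xi : K \to P \to J_1$ along $K \to C$ to obtain $C \to Q \to J_1$ and then claim that $Q/K$ carries a natural deflation onto $A$. It does not: from the composite inflation $K \to C \to Q$ one obtains a conflation $A \to Q/K \to J_1$, so $A$ is a \emph{sub}object of $Q/K$, not a quotient. There is a genuine deflation $Q \to A$ (with kernel $P$), but showing that \emph{this} morphism lies in $\mcI$ is precisely the hard step, and you cannot get it from the single conflation at the $J_1$-level: the object $J_1$ need not lie in $\Ob(\mcI)$, and the pushout comparison map $c' : P \to Q$ is not a priori in $\mcI$. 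Your parenthetical acknowledges this and never resolves it.

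What is missing is the \emph{other} half of the arrow $a$. The paper performs the pushout in $(\Arr(\mcA); \ME)$: pushing the ME-conflation $1_K \to 1_C \stackrel{i}{\to} 1_A$ out along the ME-inflation $1_K \to a$ yields an arrow $b : B_0 \to B_1$ together with deflations $s_0 : B_0 \to A$ and $s_1 : B_1 \to A$ satisfying $s_0 = s_1 b$. Now $B_1$ sits in a conflation $C \to B_1 \to J_1$, the morphism $i : C \to A$ is in $\mcI$, and the composite $B_0 \to J_0 \stackrel{j}{\to} J_1$ is in $\mcI$ because $j \in \mcI$; Lemma~\ref{ext crit} then gives $s_0 \in \mcI \diamond \mcI = \mcI$. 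Your sketch only tracks the $J_1$-level, so it loses both the map $b$ and the $J_0$-data, which is exactly what Lemma~\ref{ext crit} needs. Once you know $s_0 \in \mcI$, your final paragraph is correct and is how the paper concludes.
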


\begin{proof}
It must be shown that $\Ext (i', K) = 0$ for every morphism $i' : I'_0 \to I'_1$ in $\mcI.$ Equivalently, every ME-conflation 
$\xi : 1_K \stackrel{m}{\rightarrow} a \to i'$ is null homotopic. This is depicted by the diagram
\DIAGV{80}
{K} \n {\Ear{m_0}} \n {A_0} \n {\ear} \n {I'_0} \nn 
{\saR{1_K}} \n {} \n {\saR{a}} \n {} \n {\saR{i'}} \nn 
{K} \n {\ear} \n {A_1} \n {\ear} \n {I'_1.} 
\diag
We will use the hypothesis that $i : I \to A$ is an $\mcI$-cover to prove that 
$m_0 : K \to A_0$ is a split inflation. Let us take the pushout of $\xi$ along the ME-conflation $1_K \to 1_C \stackrel{i}{\rightarrow} 1_A$ to obtain the diagram
\DIAGV{80}
{1_K} \n {\ear} \n {1_C} \n {\Ear{i}} \n {1_A} \nn 
{\saR{m}} \n {} \n {\sar} \n {} \n {\seql} \nn 
{a} \n {\ear} \n {b} \n {\Ear{s}} \n {1_A} \nn
{\sar} \n {} \n {\saR{j}} \nn 
{i'} \n {\eeql} \n {i'}  
\diag
in $\Arr (\mcA ).$ By Theorem~\ref{mono-epi}, all the rows and columns of this diagram are ME-conflations. Regarded as a diagram in $\mcA,$ it is given by
\DIAGV{60}
{} \n {} \n {K} \n {} \n {\earv{100}} \n {} \n {C} \n {} \n {\Earv{i}{100}} \n {} \n {A} \nn
{} \n {\sweql} \n {} \n {} \n {} \n {\sweql} \n {} \n {} \n {} \n {\sweql} \n {} \nn
{K} \n {} \n {\cross{\earv{100}}{\saRv{\movename{m_0}{0}{-10}}{100}}} \n {} \n {C} \n {} \n {\cross{\earv{100}}{\sarv{100}}} \n {} \n {A} \n {} \n {\seqlv{100}} \nn
{} \nn
{\sarv{100}} \n {} \n {A_0} \n {} \n {\cross{\earv{100}}{\sarv{100}}} \n {} \n {B_0} \n {} \n {\cross{\Earv{\movename{s_0}{20}{0}}{100}}{\seqlv{100}}} \n {} \n {A} \nn
{} \n {\swaR{a}} \n {} \n {} \n {} \n {\swaR{b}} \n {} \n {} \n {} \n {\sweql} \n {} \nn
{A_1} \n {} \n {\cross{\earv{100}}{\sarv{100}}} \n {} \n {B_1} \n {} \n {\cross{\Earv{\movename{s_1}{20}{0}}{100}}{\saRv{\movename{j_0}{0}{-10}}{100}}} \n {} \n {A} \n {} \n {} \nn
{} \n {} \n {} \n {} \n {} \n {} \n {} \n {} \n {} \n {} \n {} \nn
{\sarv{100}} \n {} \n {I'_0} \n {} \n {\cross{\eeqlv{100}}{\saRv{\movename{j_1}{0}{-10}}{100}}} \n {} \n {I'_0} \nn
{} \n {\swaR{i'}} \n {} \n {} \n {} \n {\swaR{i'}} \n {} \n {} \n {} \n {} \n {} \nn
{I'_1} \n {} \n {\eeqlv{100}} \n {} \n {I'_1.} \n {} \n {} \n {} \n {} \n {} \n {}
\diag
If we can show that $s_0 : B_0 \to A$ belongs to $\mcI,$ then the properties of the $\mcI$-cover $i : C \to A$ will ensure that inflation the
$m_0 : K \to A_0$ has a retraction that yields a homotopy of $\xi.$ Let us factor $s_0$ as $s_0 = s_1b$ and extract from the above diagram the commutative diagram
\DIAGV{80}
{} \n {} \n {B_0} \n {} \n {} \nn
{} \n {} \n {\saR{b}} \n {\Sear{i'j_0}} \n {} \nn
{C} \n {\ear} \n {B_1} \n {\ear} \n {I'_1} \nn
{} \n {\seaR{i}} \n {\saR{s_1}} \n {} \n {} \nn
{} \n {} \n {A,}
\diag
where the composition of the middle column is given by $s_0 = s_1b$ and the middle row is the conflation that appears in the front middle column of the diagram above. Now $i$ and $i'j_0$ both belong to $\mcI,$ which is closed under extensions, so that Lemma~\ref{ext crit} implies that 
$s_0 \in \mcI,$ as required.  
\end{proof}

\begin{definition} 
An ideal $\mcI$ is {\em covering} if every object $A$ in $\mcA$ has an $\mcI$-cover.
\end{definition}

If $\mcI$ is a covering ideal in an exact category $(\mcA; \mcE),$ it is not known, even for the phantom ideal $\grF$ in $\RMod,$ if $\mcI^2$ is a covering ideal. The next result, whose proof is immediate from the previous lemma, is an ideal version of Wakamatsu's Lemma~\cite{W}.

\begin{theorem} {\rm (Wakamatsu's Lemma)}
Every covering ideal $\mcI,$ closed under extensions, is an object-special precovering ideal.
\end{theorem}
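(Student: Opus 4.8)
The plan is to show directly that, for a covering ideal $\mcI$ closed under extensions, the $\mcI$-cover of an arbitrary object is itself an object-special $\mcI$-precover; once Lemma~\ref{Wakamatsu} is in hand, everything that remains is a matter of matching definitions.

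First I would fix an object $A \in \mcA$ and take its $\mcI$-cover $i : C \to A$. An $\mcI$-cover is in particular an $\mcI$-precover, and every $\mcI$-precover is a deflation, so $i$ fits into a conflation $\Xi : K \stackrel{m}{\to} C \stackrel{i}{\to} A$ in $\mcE$, with $K = \Ker i$. Lemma~\ref{Wakamatsu} then applies: since $\mcI$ is closed under extensions, $K \in \Ob(\mcI^{\perp})$, equivalently $1_K \in \mcI^{\perp}$.

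Next I would promote $\Xi$ to the arrow category. The conflation $1_{\Xi} : 1_K \to 1_C \stackrel{i}{\to} 1_A$ in $\Arr(\mcE)$ is an ME-conflation (this is the special case, recorded just after Definition~\ref{mono epi def}, in which a conflation of $(\mcA;\mcE)$ is regarded in $\Arr(\mcA)$). Taking $c = 1_C$ and $\gro(A) = 1_K$, this ME-conflation has the shape $\gro(A) \to c \stackrel{i}{\to} 1_A$ demanded of a special $\mcI$-precover: its deflation-component is $i \in \mcI$, and its left arrow $1_K$ lies in $\mcI^{\perp}$. Hence $i : C \to A$ is a special $\mcI$-precover of $A$ with $\mcI$-syzygy $\gro(A) = 1_K$. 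Since $1_K$ is an isomorphism (indeed an identity), this precover is object-special, with object $\mcI$-syzygy $\grO(A) = K$; as $A$ was arbitrary, $\mcI$ is an object-special precovering ideal.

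I do not expect a genuine obstacle here: the substance has already been discharged in Lemma~\ref{Wakamatsu}, and the only delicate point is the bookkeeping that reads the kernel conflation of a cover as the ME-conflation $\gro(A) \to c \to 1_A$ from the definition of an object-special precover, with $c = 1_C$ and $\gro(A) = 1_K$. One could instead route the conclusion through Proposition~\ref{object syzygy} — Lemma~\ref{Wakamatsu} makes the object ideal generated by the kernels of all $\mcI$-covers into an $\mcI$-syzygy ideal that is an object ideal — but that formulation still presupposes that $\mcI$ is special precovering, which is exactly what the direct argument above establishes along the way, so I would keep the direct verification.
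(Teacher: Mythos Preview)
Your proposal is correct and follows exactly the approach the paper intends: the paper's proof of this theorem consists of the single remark that it ``is immediate from the previous lemma,'' and you have simply spelled out why, matching the conflation $K \to C \to A$ of the $\mcI$-cover with the ME-conflation $1_K \to 1_C \to 1_A$ required by the definition of an object-special $\mcI$-precover. There is nothing to add.
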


\begin{example} \label{ph ideals}
The phantom ideal $\grF$ of $\RMod$ is covering by~\cite[Thm 7]{H}. That it is closed under extensions follows from the fact that it is right $\Tor$-orthogonal to the category of all right $R$-modules. Precisely, let $h$ be a morphism in the extension ideal $\grF \diamond \grF.$ By Lemma~\ref{ext crit}, the morphism $h$ may be expressed as a composition $h = h^2 h^1$ as shown in the commutative diagram
\DIAGV{80}
{} \n {} \n {} \n {} \n {A} \n {} \n {} \nn
{} \n {} \n {} \n {} \n {\saR{h^1}} \n {\Sear{\grf_2}} \n {} \nn
{0} \n {\ear} \n {X} \n {\Ear{m}} \n {Z} \n {\Ear{p}} \n {Y} \n {\ear} \n {0} \nn
{} \n {} \n {} \n {\seaR{\grf_1}} \n {\saR{h^2}} \n {} \n {} \nn
{} \n {} \n {} \n {} \n {B,} \n {} \n {}
\diag
where $\grf_1$ and $\grf_2$ are phantom morphisms and the middle row is exact. Let $N = N_R$ be a right $R$-module and apply the covariant functor $\Tor_1 (N,-)$ to the diagram above to obtain the commutative diagram
\DIAGV{70}
{} \n {} \n {} \n {} \n {\Tor (N,A)} \nn
{} \nn
{} \n {} \n {} \n {} \n {\Sarv{\Tor (N,h^1)}{110}} \n {} \n {\Searv{0}{120}} \nn
{} \nn
{\Tor (N,X)} \n {} \n {\Earv{\Tor (N,m)}{80}} \n {} \n {\Tor (N,Z)} \n {} \n {\Earv{\Tor (N,p)}{80}} \n {} \n {\Tor (N,Y)} \nn
{} \nn
{} \n {} \n {\seaRv{0}{120}} \n {} \n {\saRv{\Tor (N,h^2)}{110}} \nn
{} \nn
{} \n {} \n {} \n {} \n {\Tor (N,B)}
\diag
of abelian groups, whose middle row is exact. As in the conclusion of the proof of Theorem~\ref{mult/ext}, $\Tor (N,h) = \Tor (N,h^2) \Tor (N,h^1) = 0,$ for every $N_R,$ which implies that $h$ is a phantom morphism. One concludes from Wakamatsu's Lemma the (known) fact that $\grF$ is an object-special precovering ideal.
\end{example}

Let us now turn our attention to idempotent ideals. If $\mcI$ and $\mcJ$ are ideals, every morphism in the product ideal is of the form $f = \sum_k i_kj_k  : A \to B,$ where each $j_k : A \to X_k$ belongs to $\mcJ$ and each $i_k : X_k \to B$ belongs to $\mcI.$ If $\mcI$ is precovering, then every $i_k$ factors through an $\mcI$-precover $i_B : X \to B,$ $i_k = i_Bg_k,$ where $g_k : X_k \to X.$ The morphism $f = \sum_k i_B g_kj_k = i_B (\sum_k g_kj_k)$ is therefore expressible as a composition of a morphism $i_B$ in $\mcI$ and a morphism in $\mcJ.$ The next result is a kind of dual to Wakamatsu's Lemma, because its subject is the covering idempotent ideals, rather than covering ideals closed under extensions.

\begin{proposition} \label{idem cover}
An idempotent covering ideal is an object ideal.
\end{proposition}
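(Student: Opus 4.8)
The plan is to prove the identity $\mcI = \langle \Ob (\mcI) \rangle;$ since the inclusion $\langle \Ob (\mcI) \rangle \subseteq \mcI$ is automatic, all the work lies in showing that every $f \in \mcI (A,B)$ factors through some object of $\mcI,$ which by Proposition~\ref{ideal/additive} is precisely the condition $f \in \langle \Ob (\mcI) \rangle.$ The argument reduces to a single object: I would first establish that for \emph{every} object $X$ of $\mcA,$ if $i_X : C_X \to X$ denotes an $\mcI$-cover of $X$ (one exists because $\mcI$ is covering), then $1_{C_X} \in \mcI,$ i.e.\ $C_X \in \Ob (\mcI).$ Granting this, an arbitrary $f \in \mcI (A,B)$ factors through the $\mcI$-precover $i_B : C_B \to B$ as $f = i_B f',$ and since $C_B \in \Ob (\mcI)$ this displays $f$ as a morphism factoring through an object of $\mcI.$

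To prove $1_{C_X} \in \mcI,$ I would exploit idempotency together with the defining property of a cover. Since $\mcI = \mcI^{2},$ the morphism $i_X$ lies in $\mcI^{2}(C_X, X),$ hence $i_X = \sum_k a_k b_k$ with $b_k : C_X \to Y_k$ in $\mcI$ and $a_k : Y_k \to X$ in $\mcI.$ Because $i_X$ is in particular an $\mcI$-precover of $X,$ each $a_k$ factors through it, $a_k = i_X g_k$ with $g_k : Y_k \to C_X,$ exactly as in the observation in the paragraph preceding the statement. Putting $g := \sum_k g_k b_k : C_X \to C_X,$ one gets $i_X g = i_X,$ and $g \in \mcI (C_X, C_X)$ because each $g_k b_k$ is the composite of the ideal element $b_k$ with $g_k,$ so belongs to the ideal. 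Now the defining property of the $\mcI$-cover forces $g$ to be an automorphism of $C_X.$ Finally $1_{C_X} = g^{-1}g$ lies in $\mcI,$ since $\mcI$ is closed under left composition with the morphism $g^{-1};$ thus $C_X \in \Ob (\mcI).$

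Assembling the pieces: by Proposition~\ref{ideal/additive}, $\Ob (\mcI)$ is an additive subcategory and $\langle \Ob (\mcI) \rangle$ is exactly the class of morphisms factoring through some object of $\mcI;$ the reduction above shows every $f \in \mcI (A,B)$ is of this form, so $\mcI \subseteq \langle \Ob (\mcI) \rangle$ and equality holds, proving $\mcI$ is an object ideal. The one point that I expect to require the most care is the justification that the ``$i_X = i_X g$ with $g \in \mcI$'' factorization is legitimate: it uses that an $\mcI$-cover is an $\mcI$-precover, so that the cover $i_X$ may itself play the role of the precover through which the summands $a_k$ factor, and it is precisely this self-referential use of the cover that converts idempotency plus the automorphism property into the membership $1_{C_X} \in \mcI.$ Everything else is formal bookkeeping with ideals. (As the surrounding text notes, this result is a counterpart of Wakamatsu's Lemma, trading ``closed under extensions'' for ``idempotent''.)
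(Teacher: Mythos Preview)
Your proof is correct and follows essentially the same approach as the paper's. The paper first invokes the observation preceding the proposition to collapse the sum $\sum_k a_k b_k$ into a single composition $i = i_1 i_2$ with $i_1, i_2 \in \mcI,$ and then factors $i_1$ through the cover to obtain an endomorphism in $\mcI$ that the cover property forces to be invertible; you carry out the same argument keeping the sum explicit, but the logic---produce $g \in \mcI(C_X, C_X)$ with $i_X g = i_X,$ invoke the cover property, conclude $1_{C_X} \in \mcI$---is identical.
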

 
\begin{proof}
Let $\mcI$ be an idempotent covering ideal and suppose that $i : C \to A$ is an $\mcI$-cover of an object $A.$ The foregoing comments imply that we may express $i$ as a composition $i = i_1 i_2$ of morphisms in $\mcI,$
\DIAGV{80}
{C} \n {\Ear{i}} \n {A} \nn 
{\saR{i_2}} \n {} \n {\seql} \nn 
{B} \n {\Ear{i_1}} \n {A} \nn
{\saR{f}} \n {} \n {\seql} \nn 
{C} \n {\Ear{i}} \n {A.}
\diag
Because $i_1 : B \to A$ belongs to $\mcI,$ it will factor as shown above, $i_1 = if.$ Because $i : C \to A$ is an $\mcI$-cover, the endomorphism 
$fi_2 : C \to C$ is an invertible morphism in $\mcI.$ It follows that $1_C \in \mcI,$ and therefore that every morphism in $\mcI$ with codomain $A$ factors through the object $C \in \Ob (\mcI).$ 
\end{proof} 

A ring $R$ is called {\em phantomless} if the phantom ideal is an object ideal, $\grF = \langle \RFlat \rangle.$

\begin{proposition} \label{object}
A ring $R$ is phantomless if and only if the phantom ideal is idempotent. This is equivalent to $\RCotor \subseteq \Ob (\grF^{\perp}).$
\end{proposition}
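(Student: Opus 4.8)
The statement packages the equivalence of three conditions: (i) $R$ is phantomless, i.e.\ $\grF=\langle\RFlat\rangle$; (ii) $\grF^{2}=\grF$; and (iii) $\RCotor\subseteq\Ob(\grF^{\perp})$. The plan is to prove the cycle $(i)\Rightarrow(ii)\Rightarrow(iii)\Rightarrow(i)$, each implication resting on a result already established.

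For $(i)\Rightarrow(ii)$ it is enough to recall that an object ideal is idempotent: if $f\colon A\to B$ factors as $A\to F\to B$ through a flat module $F$, then the first factor (having flat codomain) and the second (having flat domain) both lie in $\langle\RFlat\rangle$, so $f\in\langle\RFlat\rangle^{2}$; hence $\langle\RFlat\rangle^{2}=\langle\RFlat\rangle$. The implication $(iii)\Rightarrow(i)$ is Proposition~\ref{cotorsion syzygy} in the case $n=1$: from $\RCotor\subseteq\Ob[(\grF^{1})^{\perp}]$ one obtains $\grF=\grF^{1}=\langle\RFlat\rangle$.

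The remaining implication $(ii)\Rightarrow(iii)$ is where the earlier machinery is used. Recall first that $\grF$ is a covering ideal (Example~\ref{ph ideals}, via~\cite[Thm~7]{H}); being idempotent by hypothesis, it is an object ideal by Proposition~\ref{idem cover}, hence $\grF=\langle\Ob(\grF)\rangle$ by Proposition~\ref{ideal/additive}. Since $1_{F}\in\grF$ holds precisely when $\Tor_{1}^{R}(-,F)=0$, i.e.\ when $F$ is flat, we have $\Ob(\grF)=\RFlat$, so $\grF=\langle\RFlat\rangle$. Now fix a cotorsion module $C$. To see that $1_{C}\in\grF^{\perp}$ we must verify $\Ext(\grf,1_{C})=0$ for every $\grf\in\grF$. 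By Proposition~\ref{ideal/additive} such a $\grf\colon X\to Y$ factors as $X\to F\to Y$ through a flat module $F$, so the pullback map $\Ext(\grf,C)\colon\Ext(Y,C)\to\Ext(X,C)$ factors through $\Ext^{1}_{R}(F,C)$, which vanishes because $F$ is flat and $C$ is cotorsion. Invoking the characterization of $\Ext$-orthogonality of a pair of morphisms in terms of the induced map on $\Ext$-groups (Section~5), we conclude $\Ext(\grf,1_{C})=0$, so $C\in\Ob(\grF^{\perp})$, completing the cycle.

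I do not expect a genuine obstacle: each implication quotes a prior result. The one point that wants care is the passage between the morphism-level notion of $\Ext$-orthogonality that defines $\grF^{\perp}$ and the object-level vanishing of $\Ext^{1}_{R}$; this is exactly what the $\Ext$-orthogonality proposition of Section~5 supplies, once $\Ob(\grF)$ has been identified with $\RFlat$.
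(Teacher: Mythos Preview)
Your proof is correct and uses the same ingredients as the paper's: the fact that $\grF$ is covering, Proposition~\ref{idem cover}, Proposition~\ref{cotorsion syzygy}, and the definition of a cotorsion module. The only difference is organizational---you run a cycle $(i)\Rightarrow(ii)\Rightarrow(iii)\Rightarrow(i)$, whereas the paper proves the two equivalences separately; in particular your step $(ii)\Rightarrow(iii)$ passes through $(i)$ as an intermediate conclusion, which is slightly redundant but not wrong.
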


\begin{proof}
The first equivalence follows from the fact~\cite[Thm 7]{H} that $\grF$ is a covering ideal and Proposition~\ref{idem cover}. The second statement follows from Proposition~\ref{cotorsion syzygy} and the definition of a cotorsion module.
\end{proof}

\begin{proposition} \label{idem 2}
A special (resp., object-special) precovering ideal $\mcI$ is idempotent if and only if some $\mcI$-syzygy ideal (resp., subcategory) is closed under extensions. 
\end{proposition}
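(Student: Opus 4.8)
The plan is to deduce everything from two results proved above: the Chain Rule (Theorem~\ref{syzygy ext}, in the form of Corollary~\ref{two ideals}), which identifies $\gro(\mcI)\diamond\gro(\mcI)$ as an $\mcI^2$-syzygy ideal, and Proposition~\ref{double dual}, which recovers a special precovering ideal as $\mcI = {}^{\perp}\mcJ$ from \emph{any} one of its syzygy ideals $\mcJ$. The idempotence condition $\mcI^2=\mcI$ is then fed into this machinery, with Corollary~\ref{idem 1} handling the easy direction.

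For the forward implication, suppose $\mcI$ is idempotent. In the ideal case there is nothing to do: $\mcI^{\perp}$ is the largest $\mcI$-syzygy ideal (as noted after Definition~\ref{spi}) and it is closed under extensions by Corollary~\ref{idem 1}. In the object-special case one cannot simply take $\mcI^{\perp}$, since it need not be an object ideal without extra hypotheses (cf.\ Proposition~\ref{object syzygy}); instead I would use the subcategory $\mcC = \Ob(\mcI^{\perp})$. It is closed under finite direct sums, being $\Ob$ of an ideal (Proposition~\ref{ideal/additive}); it is closed under extensions, because a conflation $X\to Z\to Y$ with $1_X,1_Y\in\mcI^{\perp}$ yields $1_Z = 1_X\star 1_Y \in \mcI^{\perp}\diamond\mcI^{\perp} = \mcI^{\perp}$ by Corollary~\ref{idem 1}, so $Z\in\Ob(\mcI^{\perp})$; and it is an $\mcI$-syzygy subcategory, since an object-special $\mcI$-precover of any $A$ has an object $\mcI$-syzygy $\grO_{\mcI}(A)\in\Ob(\mcI^{\perp})$, whence $\langle\mcC\rangle$ contains an $\mcI$-syzygy of every object and is contained in $\mcI^{\perp}$.

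For the converse, suppose an $\mcI$-syzygy ideal $\mcJ$ is closed under extensions, $\mcJ\diamond\mcJ=\mcJ$ (in the object-special case suppose an $\mcI$-syzygy subcategory $\mcC$ is closed under extensions, $\mcC\star\mcC=\mcC$, and put $\mcJ:=\langle\mcC\rangle$, which is still an $\mcI$-syzygy ideal with $\mcJ\diamond\mcJ=\mcJ$ by Theorem~\ref{object ext}). Choosing $\gro(\mcI)=\mcJ$ (resp.\ $\grO(\mcI)=\mcC$) in Corollary~\ref{two ideals}, the ideal $\gro(\mcI)\diamond\gro(\mcI)=\mcJ\diamond\mcJ=\mcJ$ is an $\mcI^2$-syzygy ideal, and by the same corollary $\mcI^2$ is special (resp.\ object-special) precovering. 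Thus $\mcJ$ is simultaneously an $\mcI$-syzygy ideal and an $\mcI^2$-syzygy ideal, so Proposition~\ref{double dual} applied to $\mcI$ and to $\mcI^2$ gives $\mcI = {}^{\perp}\mcJ = \mcI^2$; hence $\mcI$ is idempotent.

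The only delicate point is the object-special forward direction, where one must resist invoking $\mcI^{\perp}$ (an object ideal only under an additional hypothesis such as enough injective objects) and instead verify that $\Ob(\mcI^{\perp})$ already works — the crux being that object-special precovering is precisely what forces $\langle\Ob(\mcI^{\perp})\rangle$ to contain an $\mcI$-syzygy of every object. Everything else is routine bookkeeping with the operations $\diamond$ and $\star$, using their associativity (Proposition~\ref{ideal associativity}); in particular no hypothesis on the existence of enough injective or projective morphisms is needed, in keeping with the rest of this section.
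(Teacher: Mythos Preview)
Your proof is correct and follows essentially the same approach as the paper: both directions hinge on Corollary~\ref{idem 1} for the forward implication (with $\Ob(\mcI^{\perp})$ in the object-special case) and on Corollary~\ref{two ideals} for the converse, identifying the same ideal $\mcJ$ as a syzygy ideal for both $\mcI$ and $\mcI^2$. The only cosmetic difference is that for the converse you invoke Proposition~\ref{double dual} to conclude $\mcI = {^{\perp}}\mcJ = \mcI^2$ directly, whereas the paper unpacks this step by observing that a special $\mcI^2$-precover with syzygy in $\mcJ$ is already a special $\mcI$-precover, forcing every $\mcI$-morphism to factor through it.
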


\begin{proof}
If $\mcI$ is idempotent, then $\gro (\mcI) = \mcI^{\perp}$ is closed under extensions, by Corollary~\ref{idem 1}. Conversely, suppose that some $\mcI$-syzygy ideal $\gro (\mcI)$ is closed under extensions. By Corollary~\ref{two ideals}, 
$\gro (\mcI^2) = \gro (\mcI) \diamond \gro (\mcI) = \gro (\mcI).$ Let $A$ be an object of $\mcA$ and consider a special $\mcI^2$-precover $i_1 : C_1 \to A$ of $A$ with $\mcI^2$-syzygy $\gro : W_0 \to W_1$ in $\gro (\mcI^2)$ as shown in
\DIAGV{70}
{W_0} \n {\ear} \n {C_0} \n {\ear} \n {A} \nn
{\saR{\gro}} \n {} \n {\sar} \n {} \n {\seql} \nn
{W_1} \n {\ear} \n {C_1} \n {\Ear{i_1}} \n {A.} 
\diag
Then $i_1 \in \mcI^2 \subseteq \mcI$ and $\gro \in \gro (\mcI^2) \subseteq \gro (\mcI),$ so that $i_1 : C_1 \to A$ is a special $\mcI$-precover. It follows that every morphism in $\mcI$ with codomain $A$ factors through $i_1$ and therefore belongs to $\mcI^2.$ Thus 
$\mcI \subseteq \mcI^2.$

If $\mcI$ is an idempotent object-special precovering ideal, then $\mcI^{\perp}$ is closed under extensions. The $\mcI$-syzygy subcategory
$\grO (\mcI) = \Ob (\mcI^{\perp})$ is then closed under extensions, because, as in the proof of Theorem~\ref{object ext},
$$\Ob (\mcI^{\perp}) \star \Ob (\mcI^{\perp}) \subseteq \Ob (\mcI^{\perp} \diamond \mcI^{\perp}) = \Ob (\mcI^{\perp}).$$ 
Suppose, on the other hand, that some $\mcI$-syzygy subcategory $\grO (\mcI)$ is closed under extensions, $\grO (\mcI) \star \grO (\mcI) \subseteq \grO (\mcI).$ By Corollary~\ref{two ideals}, the subcategory $\grO (\mcI) \star \grO (\mcI) = \grO (\mcI^2)$ is an $\mcI^2$-syzygy subcategory. If $A$ is an object of $\mcA,$ and $i : C \to A$ is an object-special $\mcI^2$-precover 
\DIAGV{80}
{\grO_{\mcI^2}(A)} \n {\ear} \n {C} \n {\Ear{i}} \n {A,}
\diag
whose kernel lies in $\grO (\mcI^2) \subseteq \grO (\mcI),$ then, because $i \in \mcI,$ the morphism is an object-special $\mcI$-precover. As above, $\mcI \subseteq \mcI^2$ and $\mcI$ is idempotent.  
\end{proof}

Every pure injective module is cotorsion and the subcategory of cotorsion modules is closed under extension. Proposition~\ref{ext by objects} thus yields the inclusions $\RPinj \subseteq \Ob (\grF^{\perp}) \subseteq \RCotor.$ A ring $R$ is a called a {\em Xu} ring if the equality $\RPinj = \RCotor$ holds. In that case, the $\grF$-syzygy subcategory $\RPinj$ is closed under extensions, so that Proposition~\ref{idem 2} implies that $\grF$ is idempotent and therefore that the ring $R$ is phantomless. Xu rings have been characterized in~\cite[Thm 3.5.1]{X} as follows. We offer a proof using the present theory. 
 
\begin{proposition} {\rm (Xu)}
Let $R$ be an associative ring with identity. Every cotorsion left $R$-module is pure injective if and only if the subcategory $\RPinj$ of pure injective left $R$-modules is closed under extensions.
\end{proposition}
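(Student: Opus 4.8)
The plan is to read the statement off the theory of the phantom ideal $\grF$ developed above, exploiting that $\grF$ is an object-special precovering ideal of $\RMod$ with $\grF$-syzygy subcategory $\grO(\grF) = \RPinj$, together with the standing inclusions $\RPinj \subseteq \Ob(\grF^{\perp}) \subseteq \RCotor$.

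For the implication ``$\Rightarrow$'' I would argue directly: every pure injective module is cotorsion, and $\RCotor$ is closed under extensions (from the long exact $\Ext^{\ast}_R(F,-)$-sequence, $F$ flat); hence an extension of two pure injective modules is cotorsion, and so pure injective by hypothesis, showing $\RPinj$ is closed under extensions.

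For ``$\Leftarrow$'', assuming $\RPinj$ is closed under extensions, I would first note that $\RPinj = \grO(\grF)$ is an $\grF$-syzygy subcategory closed under extensions, so Proposition~\ref{idem 2} makes $\grF$ idempotent and then Proposition~\ref{object} gives $\RCotor \subseteq \Ob(\grF^{\perp})$. Next, since $\RMod$ has enough injective objects, Proposition~\ref{ext by objects} applied to the $\grF$-syzygy subcategory $\RPinj$ yields $\Ob(\grF^{\perp}) = \add(\RPinj \star \mcEInj)$; because $\mcEInj = \RInj \subseteq \RPinj$ and $0 \in \RInj$, closure of $\RPinj$ under extensions gives $\RPinj \star \mcEInj = \RPinj$, and $\add(\RPinj) = \RPinj$, so $\Ob(\grF^{\perp}) = \RPinj$. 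Combining these, $\RCotor \subseteq \RPinj$, and the reverse inclusion is automatic, so $\RPinj = \RCotor$. (One could instead reach $\Ob(\grF^{\perp}) = \add(\RPinj \star \RInj)$ via the relation $\grF^{\perp} = \langle \RPinj \rangle \diamond \mcEinj$ of Theorem~\ref{syzygy ideal} together with Theorem~\ref{object ext}.)

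I expect no real obstacle: the content is packaged in Propositions~\ref{idem 2}, \ref{object} and \ref{ext by objects}. The single point that needs care is the identification $\Ob(\grF^{\perp}) = \RPinj$, which relies on $\RInj \subseteq \RPinj$ so that closure of $\RPinj$ under extensions absorbs the ``$\star\,\mcEInj$'' occurring in Proposition~\ref{ext by objects}.
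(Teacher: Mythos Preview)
Your proposal is correct and follows essentially the same route as the paper: for the nontrivial direction you invoke Proposition~\ref{idem 2} to get $\grF$ idempotent, Proposition~\ref{object} to get $\RCotor \subseteq \Ob(\grF^{\perp})$, and Proposition~\ref{ext by objects} together with $\RInj \subseteq \RPinj$ and the extension-closure hypothesis to collapse $\add(\RPinj \star \RInj)$ to $\RPinj$. The paper's proof is terser but relies on exactly these ingredients, with the phantomless step already explained in the paragraph preceding the proposition.
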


\begin{proof}
If every cotorsion module is pure injective, then it is immediate that the subcategory $\RPinj$ is closed under extensions. Conversely, if $\RPinj$ is closed under extensions, then $R$ is phantomless and, $\Ob (\grF^{\perp}) = \add (\RPinj \star \RInj) = \RPinj,$ by Proposition~\ref{ext by objects}. By Proposition~\ref{object}, every cotorsion module belongs to $\Ob (\grF^{\perp}).$
\end{proof}

In the sequel to this article, we will develop a theory to prove the dual of this proposition.

\end{document}